\pgfplotsset{compat=1.15}
\newcommand{\R}{\mathbb{R}}
\newcommand{\N}{\mathbb{N}}
\newcommand{\Ee}{\mathcal{E}}
\newcommand{\D}{\mathbb{D}}
\newcommand{\Ha}{\mathcal{H}}
\newcommand{\K}{\mathcal{K}}
\newcommand{\Leb}{\mathcal{L}}
\newcommand{\eps}{\varepsilon}
\newcommand{\loc}{\text{loc}}
\newcommand{\phii}{\varphi}
\newcommand{\bmat}{\begin{bmatrix}}
\newcommand{\emat}{\end{bmatrix}}
\newcommand{\til}{\tilde}
\newcommand{\wtil}{\widetilde}
\providecommand*{\vint}[1]{\mathchoice
          {\mathop{\vrule width 5pt height 3 pt depth -2.5pt
                  \kern -9pt \kern 1pt\intop}\nolimits_{\kern -5pt{#1}}}
          {\mathop{\vrule width 5pt height 3 pt depth -2.6pt
                  \kern -6pt \intop}\nolimits_{\kern -3pt{#1}}}
          {\mathop{\vrule width 5pt height 3 pt depth -2.6pt
                  \kern -6pt \intop}\nolimits_{\kern -3pt{#1}}}
          {\mathop{\vrule width 5pt height 3 pt depth -2.6pt
                  \kern -6pt \intop}\nolimits_{\kern -3pt{#1}}}}
\DeclareMathOperator{\Ext}{Ext}
\DeclareMathOperator{\dist}{dist}
\DeclareMathOperator{\diam}{diam}
\DeclareMathOperator{\rad}{rad}
\numberwithin{equation}{section}
\theoremstyle{plain}
\newtheorem{thm}[equation]{Theorem}
\newtheorem{prop}[equation]{Proposition}
\newtheorem{lem}[equation]{Lemma}
\theoremstyle{definition}
\newtheorem{defn}[equation]{Definition}
\newtheorem{remark}[equation]{Remark}
\newtheorem{example}[equation]{Example}
\begin{document}

\title[Obstacle problems for least gradients]{Obstacle problems with double boundary condition for least gradient functions in metric measure spaces }

\author{Josh Kline} 

\subjclass[2020]{Primary 46E36; Secondary 26A45, 49Q20, 31E05.}
\keywords{Metric measure space, bounded variation, least gradient, obstacle problem, double boundary condition}
\date{October 19, 2022}

\maketitle

\begin{abstract}
In the setting of a metric space equipped with a doubling measure supporting a $(1,1)$-Poincar\'e inequality, we study the problem of minimizing the BV-energy in a bounded domain $\Omega$ of functions bounded between two obstacle functions inside $\Omega$, and whose trace lies between two prescribed functions on the boundary of $\Omega.$  If the class of candidate functions is nonempty, we show that solutions exist for continuous obstacles and continuous boundary data when $\Omega$ is a uniform domain whose boundary is of positive mean curvature in the sense of Lahti, Mal\'y, Shanmugalingam, and Speight (2019).  While such solutions are not unique in general, we show the existence of unique minimal solutions.  Our existence results generalize those of Ziemer and Zumbrun (1999), who studied this problem in the Euclidean setting with a single obstacle and single boundary condition.   
\end{abstract}

\section{Introduction}

Given some function $f$ on the boundary of a domain $\Omega$ and a function $\psi$ on $\overline\Omega$, the obstacle problem for least gradient functions is the problem of minimizing the BV-energy in $\Omega$ over all functions $u\in BV(\Omega)$ whose trace agrees with $f$ almost everywhere on $\partial\Omega$ \emph{and} such that $u\ge\psi$ almost everwhere in $\Omega.$  In the Euclidean setting, this problem was first studied by Ziemer and Zumbrun in \cite{ZZ}, where they showed that a continuous solution exists for $\psi\in C(\overline\Omega)$ and $f\in C(\partial\Omega)$ such that $f\ge\psi$ on $\partial\Omega,$ provided the boundary of $\Omega$ has nonnegative mean curvature and is not locally area-minimizing.  Their work generalized some of the earlier results of Sternberg, Williams, and Ziemer \cite{SWZ}, who introduced and studied the Dirichlet problem for least gradient functions (without obstacle) in the Euclidean setting.  Recently existence, uniqueness, and regularity of an anisotropic formulation of the obstacle problem for least gradients in the Euclidean setting was studied in \cite{FM}.  For the relationship between the obstacle problem for least gradients and dual maximization problems in the Euclidean setting, see \cite{SS}.  

In \cite{L}, existence and regularity of solutions to the obstacle problem for least gradients were studied in the setting of a metric space equipped with a doubling measure and supporting a Poincar\'e inequality.  Here solutions were not required to attain the boundary condition in the sense of traces; competing functions need only satisfy the obstacle condition inside the domain and satisfy the boundary condition outside the domain.

In this paper, we continue the study of obstacle problems for least gradient functions in the metric setting.  In contrast to \cite{L} however, we insist that solutions address the boundary condition in the sense of traces, following \cite{ZZ} in the Euclidean setting.  Furthermore, we include a second obstacle function and consider a \emph{double} boundary condition.  That is, for $\psi_1,\psi_2:\overline\Omega\to\overline\R$ and $f,g\in L^1(\partial\Omega),$ we consider the problem
\begin{equation*}
\min\{\|Du\|(\Omega):u\in BV(\Omega),\,\psi_1\le u\le\psi_2,\, f\le Tu\le g\},
\end{equation*}
where the inequalities are in the almost everywhere sense.  By defining $\K_{\psi_1,\psi_2,f,g}(\Omega)$ to be the class of BV-functions in $\Omega$ satisfying the obstacle and boundary conditions, we refer to solutions to this problem as {\it strong solutions} to the $\K_{\psi_1,\psi_2,f,g}$-obstacle problem (see Section~2.3 below for the precise definitions).  We adopt the following standing assumptions: 
\begin{itemize}
\item $(X,d,\mu)$ is a complete metric measure space supporting a $(1,1)$-Poincar\'e inequality, with $\mu$ a doubling Borel regular measure,\vskip.2cm
\item $\Omega\subset X$ is a bounded domain with $\mu(X\setminus\Omega)>0,$\vskip.2cm
\item $\Ha(\partial\Omega)<\infty$, $\Ha|_{\partial\Omega}$ is doubling, and $\Ha|_{\partial\Omega}$ is lower codimension 1 Ahlfors regular, see \eqref{eq:LowerAR},
\item $\Ha(\{z\})=0$ for all $z\in\partial\Omega.$
\vskip.2cm
\end{itemize}
where $\Ha$ is the codimension 1 Hausdorff measure on $\partial\Omega$, see \eqref{eq:HaMeasure}.  The assumption that singletons on the boundary of the domain are $\Ha$-negligible is necessary to obtain Lemma~\ref{lem:SolnSetBound}, see Example~\ref{ex:Singletons}. Our main result is the following: 

\begin{thm}\label{thm:ExistStrongSoln}
Let $\Omega$ be a uniform domain with boundary of positive mean curvature as in Definition~\ref{defn:PMC}.  Let $f,g\in C(\partial\Omega)$ and $\psi_1,\psi_2\in C(\overline\Omega)$ be such that $\K_{\psi_1,\psi_2,f,g}(\Omega)\ne\varnothing.$ Then there exists a strong solution to the $\K_{\psi_1,\psi_2,f,g}$-obstacle problem. 
\end{thm}
By $\psi_1,\psi_2\in C(\overline\Omega),$ we mean that these functions are extended real valued functions, and continuous with respect to the standard topology on the extended real line.  As such we can consider continuous obstacle functions $\psi_1\equiv-\infty$ and $\psi_2\equiv\infty.$  However throughout this paper, we do insist that the boundary functions $f$ and $g$ are real-valued, so as to utilize certain extension results from \cite{LMSS}.

Strong solutions to this problem may fail to be unique (see \cite{LMSS} and the discussion below), and so a comparison theorem for strong solutions will not hold in general.  However, in Proposition~\ref{prop:DObsExistMin} and Remark~\ref{remark:MaxSoln}, we show that unique \emph{minimal} and \emph{maximal} strong solutions exist for continuous obstacle and boundary functions, and we obtain the following comparison-type result:

\begin{thm}\label{cor:DObsCompThm}
Let $f_1,f_2,g_1,g_2\in C(\partial\Omega)$ such that $f_1\le f_2$ and $g_1\le g_2$ $\Ha$-a.e.\ on $\partial\Omega.$  Let $\psi_1,\psi_2,\phii_1,\phii_2\in C(\overline\Omega)$ be such that $\psi_1\le\psi_2$ and $\phii_1\le\phii_2$ $\mu$-a.e.\ in $\Omega.$  Suppose that 
$$\K_1(\Omega):=\K_{\psi_1,\phii_1,f_1,g_1}(\Omega)\ne\varnothing\ne\K_{\psi_2,\phii_2,f_2,g_2}(\Omega)=:\K_2(\Omega).$$
If $u_1$ is the minimal strong solutions to the $\K_1$-obstacle problem and $u_2$ is a strong solution (not necessarily minimal) to the $\K_2$-obstacle problem, then $u_1\le u_2$ $\mu$-a.e.\ in $\Omega.$  Likewise, if $v_1$ is a strong solution to the $\K_1$-obstacle problem and $v_2$ is the maximal strong solution to the $\K_2$-obstacle problem, then $v_1\le v_2$ $\mu$-a.e.\ in $\Omega.$      
\end{thm}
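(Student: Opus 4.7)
The plan is to run the classical \emph{lattice truncation} argument: compare $u_1$ and $u_2$ through the pair $u_\wedge:=\min(u_1,u_2)$ and $u_\vee:=\max(u_1,u_2)$, use that the BV-energy splits additively between them, and then invoke minimality/maximality.

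First I would verify the membership $u_\wedge\in\K_1(\Omega)$ and $u_\vee\in\K_2(\Omega)$. For $u_\wedge$: the pointwise hypothesis $\psi_1\le \psi_2$ together with $u_i\ge\psi_i$ gives
\[
u_\wedge=\min(u_1,u_2)\ge\min(\psi_1,\psi_2)=\psi_1 \quad \mu\text{-a.e. in }\Omega,
\]
while $u_\wedge\le u_1\le\phii_1$ is immediate. For the boundary condition, using that the trace operator on $BV(\Omega)$ commutes with the lattice operations, $T u_\wedge=\min(Tu_1,Tu_2)$, so
\[
T u_\wedge\ge\min(f_1,f_2)=f_1,\qquad T u_\wedge\le T u_1\le g_1 \quad \Ha\text{-a.e. on }\partial\Omega.
\]
The verification that $u_\vee\in\K_2(\Omega)$ is symmetric, using $\psi_2\le\phii_1\vee\phii_2=\phii_2$ and $g_1\le g_2$.

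Next I would invoke the standard additivity identity for total variation under lattice operations,
\[
\|Du_\wedge\|(\Omega)+\|Du_\vee\|(\Omega)=\|Du_1\|(\Omega)+\|Du_2\|(\Omega),
\]
which holds in the metric BV setting under the standing assumptions. Since $u_\wedge\in\K_1$ and $u_1$ is a strong solution to the $\K_1$-problem, $\|Du_1\|(\Omega)\le\|Du_\wedge\|(\Omega)$; since $u_\vee\in\K_2$ and $u_2$ is a strong solution to the $\K_2$-problem, $\|Du_2\|(\Omega)\le\|Du_\vee\|(\Omega)$. The additivity then forces equality in both inequalities, so $u_\wedge$ is itself a strong solution to the $\K_1$-obstacle problem. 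Because $u_1$ is the \emph{minimal} such solution, $u_1\le u_\wedge$ $\mu$-a.e., and since $u_\wedge\le u_2$ by construction, we conclude $u_1\le u_2$ $\mu$-a.e.\ in $\Omega$.

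For the second assertion, the dual argument: with $v_\wedge:=\min(v_1,v_2)$ and $v_\vee:=\max(v_1,v_2)$, the same membership and additivity arguments show $v_\vee\in\K_2(\Omega)$ is a strong solution to the $\K_2$-obstacle problem; maximality of $v_2$ gives $v_\vee\le v_2$ $\mu$-a.e., and hence $v_1\le v_\vee\le v_2$.

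The only real points requiring care are the two black-box facts used above: that the trace commutes with $\min$ and $\max$ (straightforward from $T$ being bounded and continuous between $BV(\Omega)$ and $L^1(\partial\Omega,\Ha)$, applied to the identities $\min(a,b)=\tfrac12(a+b-|a-b|)$ and similarly for $\max$), and the total-variation additivity under lattice truncation. Both should be available from the preliminaries of Section~2, so the main conceptual work is just the bookkeeping in Step~1 that the ordering $\psi_1\le\psi_2$, $\phii_1\le\phii_2$, $f_1\le f_2$, $g_1\le g_2$ is exactly what makes the min land in $\K_1$ and the max land in $\K_2$. I do not anticipate a substantive obstacle beyond confirming these two standard BV identities in the present metric framework.
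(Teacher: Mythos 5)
Your proposal is correct and follows essentially the same lattice (min/max) argument as the paper: show $\min\{u_1,u_2\}\in\K_1(\Omega)$ and $\max\{u_1,u_2\}\in\K_2(\Omega)$ using the ordering of the obstacles and the fact that the trace commutes with $\min$ and $\max$, then use the submodularity of the BV-energy together with strong minimality to conclude. One small correction: the lattice identity for the total variation is in general only the inequality $\|D\min\{u_1,u_2\}\|(\Omega)+\|D\max\{u_1,u_2\}\|(\Omega)\le\|Du_1\|(\Omega)+\|Du_2\|(\Omega)$ (this is what the paper cites from \cite{L2}), not an equality; your argument goes through verbatim with the inequality, since combined with the two minimization inequalities it forces all of them to be equalities.
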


Problems involving related double boundary conditions have been studied in a variety of different contexts.  In stochastic analysis in particular, double-boundary (non-)crossing problems, where one tries to determine the probability that a stochastic process remains between two prescribed boundaries, have been studied extensively and have many statistical applications.  For a sampling, see \cite{BGNR,DIKT,FW,Lot} and references therein.  Related notions of double boundary layers also appear in fluid dynamics and perturbation theory, see \cite{C,KK} for example.  As such, it seems natural to consider such a double boundary condition in the context of BV-energy minimizers.      

By setting $\psi_2\equiv\infty,$ and $f=g,$ we recover the classical obstacle problem for least gradients.  If in addition we set $\psi_1\equiv-\infty,$ then we recover the Dirichlet problem for least gradient functions (also referred to as the least gradient problem).  As mentioned above, the least gradient problem was first studied in the Euclidean setting by Sternberg, Williams, and Ziemer \cite{SWZ}, who showed that unique solutions exist for continuous boundary data, provided that the boundary of the domain has nonnegative mean curvature and is not locally area-minimizing.

Since its introduction in \cite{SWZ}, existence, uniqueness, and regularity of the least gradient problem above have been studied extensively in the Euclidean setting; for a sampling, see \cite{G2,G3,G1,GRS,JMN,K,MRL,Mor,MNT,NTT,RS,RS1,ST,Z} and the references therein.  In particular, weighted versions of this problem have applications to current density impedance imaging, see for example \cite{Mor,MNT,NTT}.

In recent decades, analysis in metric spaces has become a field of active study, in particular when the space is equipped with a doubling measure and supports a Poincar\'e inequality, see for example \cite{AGS,BB,H,HKST}.  A definition of BV functions and sets of finite perimeter was extended to this setting by Miranda Jr.\ in \cite{M}, and consequently, a theory of least gradient functions in metric spaces has been developed in recent years, see for example \cite{GM,GM1,HKLS,KKLS,K,KLLS,LMSS,LS,MSS}.  In \cite{LMSS}, Lahti, Mal\'y, Shanmugalingam, and Speight studied the Dirichlet problem for least gradients, originally introduced in \cite{SWZ}, in the metric setting.  To do so, they introduced a definition of positive mean curvature in the metric setting (Definition~\ref{defn:PMC} below) and showed that solutions exist for continuous boundary data if the boundary of the domain satisfies this condition.  It is this curvature condition that we assume in Theorem~\ref{thm:ExistStrongSoln}.  

In \cite{LMSS} it was also shown that in the weighted unit disc, uniqueness and continuity of solutions may fail even for Lipschitz boundary data.  As the problem we study in this paper is a generalization of this Dirichlet problem, we cannot guarantee uniqueness or continuity of strong solutions to the $\K_{\psi_1,\psi_2,f,g}$-obstacle problem.  For this reason, our comparison-type result, Theorem~\ref{cor:DObsCompThm}, is stated in terms of minimal and maximal strong solutions. For more on existence, uniqueness, and regularity of solutions to the weighted least gradient problem, see \cite{G1,JMN,Z}.

To construct our solution, we adapt the program first implemented by Sternberg, Williams, and Ziemer to obtain solutions to the Dirichlet problem for least gradients in the Euclidean setting in \cite{SWZ}.  There they constructed a least gradient solution by first solving the Dirichlet problem for boundary data consisting of superlevel sets of the original boundary data.  This method relies upon the fact, first discovered by Bombieri, De Giorgi, and Giusti \cite{BDG}, that characteristic functions of superlevel sets of least gradient functions are themselves of least gradient.  The construction of solutions to the obstacle problem for least gradients in \cite{ZZ} is an adaptation of the program from \cite{SWZ}.

In the metric setting, the construction of solutions to the Dirichlet problem for least gradients given in \cite{LMSS} is also inspired by the method established in \cite{SWZ}.  However both  \cite{SWZ} and \cite{ZZ} utilize smoothness properties and tangent cones for the boundaries of certain solution sets, tools which are not available in the metric setting.  Thus the construction in \cite{LMSS} is a further modification of the method from  \cite{SWZ}.  As we are studying the double obstacle problem with double boundary condition in the metric setting, our construction is inspired by that of \cite{LMSS}.

In \cite{LMSS}, the authors defined weak solutions to the Dirichlet problem for least gradients (Definition~\ref{defn:WeakSolnDP} below) which are easily obtained via the direct method of calculus of variations for a large class of boundary data.  They then used these weak solutions to construct their strong solution.  Since we have introduced a double boundary condition, and thus competitor functions are not fixed outside the domain, it is difficult to obtain weak solutions of this form by the direct method.  For this reason, we define a family of $\eps$-weak solutions (Definition~\ref{defn:WeakEpsSoln} below), which consider the BV-energy in slight enlargements of the domain $\Omega.$  By controlling these $\eps$-weak solutions as $\eps\to 0,$ we obtain the proper building blocks from which to construct our strong solution to the original problem in the manner of \cite{LMSS}.  Since our argument involves enlargements of $\Omega$, we assume that $\Omega$ is a uniform domain in order to use the strong BV extension results obtained in \cite{L1}. 

The structure of our paper is as follows: in Section 2, we introduce the basic definitions, notations, and assumptions used throughout the paper.  In Section 3, we prove preliminary results regarding solutions to the double obstacle, double boundary problem when the obstacle and boundary functions are characteristic functions of certain open sets.  In Section 4, we use these preliminary results to prove Theorem~\ref{thm:ExistStrongSoln} and Theorem~\ref{cor:DObsCompThm}.  In Section~5, we provide examples illustrating how, in the absence of obstacles, strong solutions to the double boundary problem may not be solutions to the Dirichlet problems for either boundary condition.  In this section, we also study how solutions of the double obstacle, double boundary problem converge to solutions of the double obstacle, single boundary problem as the  double boundary data converge to a single datum (see Theorem~\ref{thm:Stability} below).
\\
\\
\noindent{\bf Acknowledgments.}  The author was partially supported by the NSF Grant \#DMS-2054960 and the Taft Research Center Graduate Enrichment Award.  The author would like to thank Nageswari Shanmugalingam for her kind encouragement and many helpful discussions regarding this project.

\section{Background}
\subsection{General metric measure spaces}
Throughout this paper, we assume that $(X,d,\mu)$ is a complete metric measure space, with $\mu$ a doubling Borel regular measure supporting a $(1,1)$-Poincar\'e inequality (defined below).  By \emph{doubling}, we mean that there exists a constant $C\ge 1$ such that 
$$0<\mu(B(x,2r))\le C\mu(B(x,r))<\infty$$
for all $x\in X$ and $r>0.$  By iterating this condition, there exist constants $C_D\ge 1$ and $Q>1$ such that 
\begin{equation}\label{eq:LMB}
\frac{\mu(B(y,r))}{\mu(B(x,R))}\ge C_D^{-1}\left(\frac{r}{R}\right)^Q
\end{equation}
for every $0<r\le R$, $x\in X$, and $y\in B(x,R).$  In this paper we let $C$ denote constants, depending, unless otherwise stated, only on $\Omega$ and the doubling and Poincar\'e inequality constants (see below), whose precise value is not needed.  The value of $C$ may differ even within the same line.

Complete metric spaces equipped with doubling measures are necessarily proper, i.e.\ closed and bounded sets are compact.  For any open set $\Omega\subset X$, we define $L^1_\loc(\Omega)$ as the space of functions that are in $L^1(\Omega')$ for every $\Omega'\Subset\Omega,$ that is, for every open set $\Omega'$ such that $\overline{\Omega'}$ is a compact subset of $\Omega.$  Other local function spaces are defined analogously.  For a ball $B=B(x,r)$ and $\lambda>0$, we often denote $\lambda B:=B(x,\lambda r).$  If $A,B\subset X,$ then by $A\sqsubset B$, we mean that $\mu(A\setminus B)=0.$  By a \emph{domain}, we mean a nonempty connected open set in $X.$ 

We say that $X$ is $A$-\emph{quasiconvex} if $A\ge 1$ and for all points $x,y\in X,$ there exists a curve $\gamma$ connecting $x$ and $y$ such that 
$$\ell(\gamma)\le Ad(x,y).$$
We say that $\Omega$ is an $A$-\emph{uniform domain} if $A\ge 1$ and for all $x,y\in\Omega,$ there exists a curve $\gamma$ in $\Omega$ connecting $x$ and $y$ such that $\ell(\gamma)\le Ad(x,y)$ and 
$$\min\{\ell(\gamma_{x,z}),\ell(\gamma_{z,y})\}\le A\dist(z,X\setminus\Omega)$$
for all $z\in\gamma.$  By $\gamma_{x,z}$, we mean any subcurve of $\gamma$ joining $x$ to $z.$  

\subsection{Newtonian spaces and BV functions}

For a function $u:X\to\overline\R,$ we say that a Borel function $g:X\to[0,\infty]$ is an \emph{upper gradient} of $u$ if the following inequality holds for all non-constant compact rectifiable curves $\gamma:[a,b]\to X,$ 
$$|u(x)-u(y)|\le\int_\gamma gds,$$
whenever $u(x)$ and $u(y)$ are both finite, and $\int_\gamma gds=\infty$ otherwise.  Here $x$ and $y$ denote the endpoints of the curve $\gamma.$  Upper gradients were originally introduced in \cite{HK}.

Let $\widetilde N^{1,1}(X)$ be the class of all functions in $L^1(X)$ for which there exists an upper gradient in $L^1(X).$  For $u\in\widetilde N^{1,1}(X),$ we define 
$$\|u\|_{\widetilde N^{1,1}(X)}=\|u\|_{L^1(X)}+\inf_g\|g\|_{L^1(X)},$$
where the infimum is taken over all upper gradients $g$ of $u$.  Now, we define an equivalence relation in $\widetilde N^{1,1}(X)$ by $u\sim v$ if and only if $\|u-v\|_{\widetilde N^{1,1}(X)}=0.$  

The \emph{Newtonian space} $N^{1,1}(X)$ is defined as the quotient $\widetilde N^{1,1}(X)/\sim,$ and it is equipped with the norm $\|u\|_{N^{1,1}(X)}=\|u\|_{\widetilde N^{1,1}(X)}.$  One can analogously define $N^{1,1}(\Omega)$ for an open set $\Omega\subset X.$  When $X=\R^n$ or when $X$ is a Carnot group, we have that $N^{1,1}(X)=W^{1,1}(X).$  For more on Newtonian spaces, see \cite{S} or \cite{BB,Haj,HKST}.

To define functions of bounded variation on metric spaces, we follow the definition introduced by Miranda Jr.\ in \cite{M}.  For an open set $\Omega\subset X$ and a function $u\in L^1_\loc(\Omega),$ we define the \emph{total variation} of $u$ in $\Omega$ by 
$$\|Du\|(\Omega)=\inf\left\{\liminf_{i\to\infty}\int_\Omega g_{u_i}d\mu: N^{1,1}_\loc(\Omega)\ni u_i\to u\text{ in }L^1_\loc(\Omega)\right\},$$
where $g_{u_i}$ are upper gradients of $u_i.$  For an arbitrary set $A\subset X,$ we define 
$$\|Du\|(A)=\inf\left\{\|Du\|(\Omega):A\subset\Omega,\Omega\subset X\text{ open}\right\}.$$
We say that a function $u\in L^1(\Omega)$ is in $BV(\Omega)$ (i.e.\ of \emph{bounded variation} in $\Omega$) if $\|Du\|(\Omega)<\infty.$  We equip $BV(\Omega)$ with the norm 
$$\|u\|_{BV(\Omega)}=\|u\|_{L^1(\Omega)}+\|Du\|(\Omega).$$
This definition of of the BV class coincides with the standard definition in the Euclidean setting, see \cite{AFP,EG}.  For more on the BV class in metric spaces, see \cite{A,AMP}. 

It was shown in \cite[Theorem~3.4]{M} that if $u\in BV(X)$, then $\|Du\|(\cdot)$ is a finite Radon measure on $X.$  Furthermore, the BV energy is lower semi-continuous with respect to $L^1$ convergence.  That is, for an open set $\Omega\subset X$, if $u_k\to u$ in $L^1_\loc(\Omega),$ then 
$$\|Du\|(\Omega)\le\liminf_{k\to\infty}\|Du_k\|(\Omega).$$
This property is crucial to prove existence of the minimizers considered in this paper.  

A measurable set $E\subset X$ is of \emph{finite perimeter} if $\|D\chi_E\|(X)<\infty,$ and we denote the \emph{perimeter} of $E$ in $\Omega$ by 
$$P(E,\Omega)=\|D\chi_E\|(\Omega).$$
It was also shown in \cite[Proposition~4.2]{M} that the following coarea formula holds: if $\Omega\subset X$ is open and $u\in L^1_\loc(\Omega),$ then 
$$\|Du\|(\Omega)=\int_\R P(\{u>t\},\Omega)dt.$$
If $u\in BV(X),$ then this holds with $\Omega$ replaced by any Borel set $A\subset\Omega.$  

\subsection{Poincar\'e inequality and consequences}
Throughout this paper, we assume that $X$ supports a $(1,1)$-\emph{Poincar\'e inequality}.  That is, there exist constants $\lambda, C_I\ge 1$ such that for every ball $B$, every locally integrable function $u$, and every upper gradient $g$ of $u$, we have that 
$$\fint_B|u-u_B|d\mu\le C_I\rad(B)\fint_{\lambda B}g\,d\mu.$$
Here and throughout the paper, $$u_B=\fint_B u\,d\mu=\frac{1}{\mu(B)}\int_B u\,d\mu.$$

It was shown in \cite{KL} that the $(1,1)$-Poincar\'e inequality is equivalent to the \emph{relative isoperimetric inequality}: given $E\subset X,$
$$\min\{\mu(B\cap E),\mu(B\setminus E)\}\le C\rad(B)P(E,\lambda B)$$ 
for each ball $B\subset X.$  

Given $E\subset X$, we define its \emph{codimension $1$ Hausdorff measure} by 
\begin{equation}\label{eq:HaMeasure}
\Ha(E)=\lim_{\delta\to 0^+}\inf\left\{\sum_i\frac{\mu(B_i)}{\rad(B_i)}:B_i\text{ balls in }X,\, E\subset\bigcup_i B_i,\,\rad(B_i)<\delta\right\}.
\end{equation} 
We say that $\Ha|_{\partial\Omega}$ is \emph{lower codimension $1$ Ahlfors regular} if there exists $C>0$ such that 
\begin{equation}\label{eq:LowerAR}\Ha(B(x,r)\cap\partial\Omega)\ge C\frac{\mu(B(x,r))}{r}\end{equation}
for every $x\in\partial\Omega$ and $0<r<2\diam(\partial\Omega).$

In \cite{A,AMP}, it was shown that if $\mu$ is doubling and $X$ supports a $(1,1)$-Poincar\'e inequality, then there exists a constant $C\ge 1$ such that whenever $E\subset X$ is of finite perimeter and $A\subset X$ is a Borel set, we have 
$$C^{-1}\Ha(A\cap\partial_M E)\le P(E,A)\le C\Ha(A\cap\partial_M E).$$ 
Here $\partial_M E$ is the \emph{measure-theoretic boundary} of $E,$ which is the set of all points $x\in X$ such that 
$$\limsup_{r\to 0^+}\frac{\mu(B(x,r)\cap E)}{\mu(B(x,r))}>0\quad\text{and}\quad\limsup_{r\to 0^+}\frac{\mu(B(x,r)\setminus E)}{\mu(B(x,r))}>0.$$

For an extended real-valued function $u$ on $X$, we define the approximate upper and lower limits of $u$ by 
\begin{align*}
u^\vee(x)&=\inf\left\{t\in\R:\lim_{r\to 0^+}\frac{\mu(\{u>t\}\cap B(x,r))}{\mu(B(x,r))}=0\right\},\\
u^\wedge(x)&=\sup\left\{t\in\R:\lim_{r\to 0^+}\frac{\mu(\{u<t\}\cap B(x,r))}{\mu(B(x,r))}=0\right\}.
\end{align*}
From the Lebesgue differentiation theorem, $u^\vee=u^\wedge$ $\mu$-a.e.\ if $u\in L^1_\loc(X).$  The points for which $u^\wedge(x)=u^\vee(x)$ are said to be \emph{points of approximate continuity} of $u.$  We denote by
$$S_u=\{u^\wedge<u^\vee\},$$
the so called \emph{jump set} of $u$.  If $u\in BV(X)$, then by \cite[Proposition 5.2]{AMP}, $S_u$ is of $\sigma$-finite $\Ha$-measure.  Furthermore, by \cite[Theorem~5.3]{AMP}, the Radon measure $\|Du\|(\cdot)$ has the following decomposition:
\begin{equation}\label{eq:Decomp}
d\|Du\|=g\,d\mu+d\|D^ju\|+d\|D^cu\|.
\end{equation}  
Here, $g\in L^1(X)$ and $g\,d\mu$ is the part of $\|Du\|$ that is absolutely continuous with respect to $\mu$.  We call $\|D^ju\|$ the \emph{jump-part} of $u$, which is supported in $S_u$ and is absolutely continuous with respect to $\Ha|_{S_u}.$  We call the measure $\|D^cu\|$ the \emph{Cantor-part} of $\|Du\|$.  This measure does not detect sets of $\sigma$-finite $\Ha$-measure.

The following Leibniz rule for BV functions was shown in \cite[Theorem~4.7]{L3}. If $\Omega\subset X$ is open and $u,v\in L^1_\loc(\Omega)$, then
\begin{equation}\label{eq:Leibniz}
\|D(uv)\|(\Omega)\le\int_\Omega|u|^\vee d\|Dv\|+\int_{\Omega}|v|^\vee d\|Du\|.
\end{equation}

\subsection{Least gradient functions and obstacle problems}

\begin{defn}
Let $\Omega\subset X$ be open, and let $u\in BV_\loc(\Omega).$  We say that $u$ is of \emph{least gradient} in $\Omega$ if 
$$\|Du\|(V)\le\|Dv\|(V),$$
whenever $v\in BV(\Omega)$ with $\overline{\{x\in\Omega: u(x)\ne v(x)\}}\subset V\Subset\Omega.$
\end{defn}

\begin{defn}\label{defn:WeakSolnDP}
Let $\Omega$ be a bounded domain in $X$ with $\mu(X\setminus\Omega)>0,$ and let $f\in BV_\loc(X).$  We say that $u\in BV_\loc(X)$ is a \emph{weak solution to the Dirichlet problem for least gradients in $\Omega$ with boundary data $f$}, or simply, \emph{weak solution to the Dirichlet problem with boundary data $f$}, if $u=f$ on $X\setminus\Omega$ and 
$$\|Du\|(\overline\Omega)\le\|Dv\|(\overline\Omega)$$
whenever $v\in BV(X)$ with $v=f$ on $X\setminus\Omega.$  
\end{defn}

Given a bounded domain $\Omega\subset X$ and a function $u$ on $\Omega,$ we say that $u$ has a \emph{trace} at a point $x\in\partial\Omega$ if there is a number $Tu(x)\in\R$ such that 
$$\lim_{r\to 0^+}\fint_{B(x,r)\cap\Omega}|u-Tu(x)|d\mu=0.$$
We record the following lemma regarding traces of superlevel sets.  For proof, see \cite{K} for example.

\begin{lem}\label{lem:SupLevelTrace}\emph{(\cite[Lemma~4.6]{K})}
Let $f\in L^1(\partial\Omega),$ and suppose that $u\in L^1(\Omega)$ is such that $Tu=f$ $\Ha$-a.e.\ on $\partial\Omega.$  Then for $\Leb$-a.e.\ $t\in\R$, we have that $T\chi_{\{u>t\}}=\chi_{\{f>t\}}$ $\Ha$-a.e.\ on $\partial\Omega.$ 
\end{lem}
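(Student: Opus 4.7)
The plan is to combine a pointwise Chebyshev-type estimate at each good boundary point with a Fubini argument applied to the graph of $f$. Let $E \subset \partial\Omega$ denote the set of points $x$ where the trace $Tu(x)$ exists and equals $f(x)$; by hypothesis, $\Ha(\partial\Omega \setminus E) = 0$.

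First I would fix $x \in E$ and a level $t \in \R$ with $t \ne f(x)$ and argue pointwise. Suppose $f(x) > t$. On the set $\{u \le t\}$ we have $|u - f(x)| \ge f(x) - t > 0$, so
\begin{equation*}
(f(x) - t) \fint_{B(x,r) \cap \Omega} \chi_{\{u \le t\}} \, d\mu \le \fint_{B(x,r) \cap \Omega} |u - f(x)| \, d\mu,
\end{equation*}
and the right-hand side tends to $0$ as $r \to 0^+$ by the definition of the trace. Since $|\chi_{\{u > t\}} - 1| = \chi_{\{u \le t\}}$, this yields $T\chi_{\{u > t\}}(x) = 1 = \chi_{\{f > t\}}(x)$. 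The case $f(x) < t$ is analogous, using $|\chi_{\{u > t\}} - 0| = \chi_{\{u > t\}}$ together with $|u - f(x)| \ge t - f(x)$ on $\{u > t\}$, and yields $T\chi_{\{u > t\}}(x) = 0 = \chi_{\{f > t\}}(x)$. Thus for every $x \in E$, the desired identity holds at every level $t \in \R \setminus \{f(x)\}$.

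To exchange the order of quantifiers, for each fixed $t$ the set of $x \in \partial\Omega$ at which $T\chi_{\{u>t\}}(x)$ can possibly fail to equal $\chi_{\{f>t\}}(x)$ is contained in
\begin{equation*}
(\partial\Omega \setminus E) \,\cup\, \{x \in E : f(x) = t\}.
\end{equation*}
The first piece has $\Ha$-measure zero by assumption. Since $\Ha|_{\partial\Omega}$ is a finite Borel regular measure under the standing hypotheses and $f$ is $\Ha$-measurable, the graph of $f|_E$ is $(\Ha|_{\partial\Omega} \times \Leb)$-measurable; each vertical fiber $\{t : f(x) = t\}$ is a single point, so Tonelli's theorem gives that the horizontal fiber $\{x \in E : f(x) = t\}$ has $\Ha$-measure zero for $\Leb$-a.e.\ $t \in \R$. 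Combining these two observations yields the lemma.

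The only technical subtlety is making the Fubini step rigorous, but the finiteness and Borel regularity of $\Ha|_{\partial\Omega}$ together with the $\Ha$-measurability of $f$ make this completely standard. The rest of the argument is purely pointwise and uses nothing more than the definition of the trace; no finer BV or isoperimetric machinery is required.
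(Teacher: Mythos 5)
Your argument is correct: the Chebyshev-type estimate $\left(|f(x)-t|\right)\fint_{B(x,r)\cap\Omega}|\chi_{\{u>t\}}-\chi_{\{f>t\}}(x)|\,d\mu\le\fint_{B(x,r)\cap\Omega}|u-f(x)|\,d\mu$ at points where $Tu(x)=f(x)\ne t$, combined with the observation that $\Ha(\{f=t\})=0$ for $\Leb$-a.e.\ $t$ (which follows from Fubini, or simply from the fact that $\Ha(\partial\Omega)<\infty$ forces at most countably many exceptional levels), is exactly the standard proof of this fact, and is the argument behind the cited Lemma~4.6 of \cite{K}; the paper itself does not reproduce it. No gaps.
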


\begin{defn}
Let $\Omega$ be a domain in $X$ and $f:\partial\Omega\to\R.$  We say that a function $u\in BV(\Omega)$ is a \emph{strong solution to the Dirichlet problem for least gradients in $\Omega$ with boundary data $f$}, or simply, \emph{strong solution to the Dirichlet problem with boundary data $f$}, if $Tu=f$ $\Ha$-a.e.\ on $\partial\Omega$ and whenever $v\in BV(\Omega),$ with $Tv=f$ $\Ha$-a.e.\ on $\partial\Omega,$ we must have 
$$\|Du\|(\Omega)\le\|Dv\|(\Omega).$$ 
\end{defn}

Weak and strong solutions to Dirichlet problems on a domain $\Omega$ are necessarily of least gradient in $\Omega.$    

\begin{defn}\label{defn:MinMax}
A weak (strong) solution $u$ to the Dirichlet problem with boundary data $f$ is called the \emph{minimal} weak (strong) solution if every weak (strong) solution $u'$ to the Dirichlet problem with boundary data $f$ satisfies $u\le u'$ $\mu$-a.e.\ in $X$ (or in $\Omega$ in the case of strong solutions).  Likewise, a weak (strong) solution $u$ is called the \emph{maximal} weak (strong) solution if every weak (strong) solution $u'$ to the Dirichlet problem satisfies $u\ge u'$ $\mu$-a.e.\ in $X$ (or in $\Omega$ in the case of strong solutions).  We note that minimal and maximal weak and strong solutions are necessarily unique $\mu$-a.e.\
\end{defn}

In \cite{LMSS}, it is shown that for a set $F$ of finite perimeter, there always exists $E\subset X$ such that $\chi_E$ is a weak solution to the  Dirichlet problem with boundary data $\chi_F.$  We call $E$ a \emph{weak solution set}.  Moreover, for such $F$ there exists a unique minimal weak solution set, denoted $E_F.$  However without additional assumptions on the domain, it is not possible to guarantee that a weak solution set will be a \emph{strong solution set}; that is, $\chi_E$ may not be a strong solution to the Dirichlet problem with boundary data $\chi_F|_{\partial\Omega}$.  There exist boundary data for which weak solutions exist, but no strong solution exists.  For this reason, the following definition was introduced in \cite{LMSS}.  

\begin{defn}\label{defn:PMC}
Given a domain $\Omega\subset X$, we say that the boundary $\partial\Omega$ has \emph{positive mean curvature} if for each $x\in\partial\Omega,$ there exists a non-decreasing function $\phi_x:(0,\infty)\to(0,\infty)$ and a constant $r_x>0$ such that for all $0<r<r_x$ with $P(B(x,r),X)<\infty$, we have that $B(x,\phi_x(r))\sqsubset E_{B(x,r)}$, where $E_{B(x,r)}$ is the minimal weak solution set to the Dirichlet problem with boundary data $\chi_{B(x,r)}$.
\end{defn}  

In \cite{LMSS}, it was shown that if $\Omega$ has boundary of positive mean curvature and $\Ha(\partial\Omega)<\infty,$ then for every open set $F\subset X$ of finite perimeter such that $\Ha(\partial\Omega\cap\partial F)=0,$ all weak solutions to the Dirichlet problem with boundary data $\chi_F$ are strong solutions.  Moreover, if $v\in BV(\Omega)$ is a strong solution, then extending $v$ outside $\Omega$ by $\chi_F$ yields a weak solution.  This phenomenon also holds for some other classes of boundary data.  In \cite[Proposition~4.13]{LMSS}, it was shown that if $\Omega$ has boundary of positive mean curvature with $\Ha(\partial\Omega)<\infty$ and if $f\in BV_\loc(X)\cap C(X),$ then weak solutions to the Dirichlet problem with boundary data $f$ are strong solutions, and strong solutions extended outside $\Omega$ by $f$ are weak solutions.

In this paper, we study a generalization of the Dirichlet problem for least gradients, namely by adding obstacle functions in the domain and by loosening the boundary condition to allow a range of admissible boundary values.  We define weak and strong solutions for this modified problem as follows.  

\begin{defn}\label{defn:WeakEpsSoln}
Let $\Omega\subset X$ be a domain with $\mu(X\setminus\Omega)>0,$ and let $f,g\in L^1_\loc(X).$ Additionally let $\psi_1,\psi_2:\overline\Omega\to\overline\R,$ and let 
$$\wtil\K_{\psi_1,\psi_2,f,g}(\Omega):=\{u\in L^1_\loc(X):\psi_1\le u\le\psi_2\text{ in }\Omega,\;f\le u\le g\text{ in }X\setminus\Omega\},$$
where the inequalities are in the pointwise {\it everywhere} sense.
For $\eps>0,$ let 
$$\Omega_\eps:=\{x\in X:\dist(x,\Omega)<\eps\}.$$
We say that $u\in\wtil\K_{\psi_1,\psi_2,f,g}(\Omega)$ is an {\it $\eps$-weak solution to the $\K_{\psi_1,\psi_2,f,g}$-obstacle problem} if, for all $v\in\wtil\K_{\psi_1,\psi_2,f,g}(\Omega),$ we have that 
$$\|Du\|(\Omega_\eps)\le\|Dv\|(\Omega_\eps).$$ 
\end{defn}

\begin{remark}
From the definitions, we see that if $0<\eps_1\le\eps_2$ and $u_1$ and $u_2$ are $\eps_1$ and $\eps_2$-weak solutions respectively, then it follows that 
$$\|Du_1\|(\Omega_{\eps_1})\le\|Du_2\|(\Omega_{\eps_1}).$$
Furthermore if $f=g$, then we have that $\|Dv_1\|(X\setminus\overline\Omega)=\|Dv_2\|(X\setminus\overline\Omega)$ for all $v_1,v_2\in\wtil\K_{\psi_1,\psi_2,f,g}(\Omega)$.  As such, if $u$ is an $\eps_0$-weak solution for some $\eps_0>0,$ then $u$ is an an $\eps$-weak solution for all $\eps>0.$  Hence in the case where $f=g$, we say that $u$ is a \emph{weak solution} to the $\K_{\psi_1,\psi_2,f,g}$-obstacle problem if 
$$\|Du\|(\overline\Omega)\le\|Dv\|(\overline\Omega)$$ for all $v\in\wtil\K_{\psi_1,\psi_2,f,g}(\Omega).$

\end{remark}

\begin{defn}\label{defn:StrongSolnDB}
Let $\Omega$ be a bounded domain, and let $f,g\in L^1(\partial\Omega).$ Additionally, let $\psi_1,\psi_2:\overline\Omega\to\overline\R$, and let 
$$\K_{\psi_1,\psi_2,f,g}(\Omega):=\{u\in BV(\Omega):\psi_1\le u\le\psi_2\text{ in }\Omega,\;f\le Tu\le g\text{ on }\partial\Omega\}$$
where the inequalites are in the $\mu$ and $\Ha$-a.e.\ sense respectively.  
We say that $u\in\K_{\psi_1,\psi_2,f,g}(\Omega)$ is a {\it strong solution to the $\K_{\psi_1,\psi_2,f,g}$-obstacle problem} if 
$$\|Du\|(\Omega)\le\|Dv\|(\Omega)$$
 for all $v\in\K_{\psi_1,\psi_2,f,g}(\Omega).$ 
\end{defn}
In the case where $\psi_1=\chi_A,$ $\psi_2=\chi_B,$ $f=\chi_F$ and $g=\chi_G,$ we set 
$$\K_{A,B,F,G}:=\K_{\chi_A,\chi_B,\chi_F,\chi_G}$$ in either of the above definitions for ease of notation.  

While the inequalities in the definition of $\K_{\psi_1,\psi_2,f,g}(\Omega)$ above are in the almost everywhere sense, we note that the inequalities in the definition of $\wtil\K_{\psi_1,\psi_2,f,g}(\Omega)$ are in the \emph{everywhere} sense.  Since these functions pertain to  $\eps$-weak solutions, this will not be too restrictive for our purposes.

Note that by taking $\psi_1\equiv-\infty$ and $\psi_2\equiv\infty$ and by letting $f=g$, we recover the standard Dirichlet problem for least gradients as a special case of this formulation.  Likewise, we obtain any combination of obstacle problem with upper or lower obstacles or boundary data.  Moreover, we can define minimal and maximal strong solutions in the same manner as Definition~\ref{defn:MinMax}:

\begin{defn}
A strong solution $u$ to the $\K_{\psi_1,\psi_2,f,g}$-obstacle problem is called the \emph{minimal} (\emph{maximal}) strong solution if any strong solution $u'$ satisfies $u\le u'$ ($u\ge u'$) $\mu$-a.e.\ in $\Omega.$  We note that minimal and maximal strong solutions are necessarily unique $\mu$-a.e.
\end{defn}

If $E\subset X$ is a weak solution set to the Dirichlet problem with boundary data $\chi_F$ for some $F\subset X,$ then $E$ is a $1$-quasiminimal set as in \cite[Definition~3.1]{KKLS}.  Thus by \cite[Theorem~4.2]{KKLS}, $E$ satisfies a uniform measure density condition inside $\Omega$.  That is, by modifying $E$ on a set of measure zero if necessary, there exists $\gamma_0>0$ depending only on the doubling and Poincar\'e inequality constants such that for all $x\in\Omega\cap\partial E,$ 
$$\frac{\mu(B(x,r)\cap E)}{\mu(B(x,r))}\ge\gamma_0\quad\text{and}\quad\frac{\mu(B(x,r)\setminus E)}{\mu(B(x,r))}\ge\gamma_0$$
whenever $0<r<\diam(X)/3$ such that $B(x,2r)\subset\Omega.$  We refer to this modified $E$ as a \emph{regularized} weak solution set.  Furthermore, it was shown in \cite[Theorem~5.2]{KKLS} that such regularized weak solution sets are uniformly locally porous.

\begin{thm}\label{thm:Porosity}\emph{(\cite[Theorem~5.2]{KKLS})}
There exists a constant $C_P\ge 1$ such that for every set $E$ which is $1$-quasiminimal set in $\Omega$ and for every $x\in\Omega\cap\partial E$ and $r>0$ such that $B(x,10r)\subset\Omega$, there exist points $y,z\in B(x,r)$ such that 
$$B(y,r/C_P)\subset E\cap\Omega\quad\text{and}\quad B(z,r/C_P)\subset \Omega\setminus E.$$
The constant $C_P$ depends only on the doubling and Poincar\'e constants.
\end{thm}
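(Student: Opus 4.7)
The plan is a contradiction argument that pits two perimeter estimates against each other: the upper bound forced by $1$-quasiminimality and the lower bound forced by the relative isoperimetric inequality together with the two-sided uniform measure density invoked in the paragraph preceding the theorem.

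\emph{Step 1 (upper perimeter bound).} I would first record that for every $1$-quasiminimal set $E$ and every ball with $B(x,10r)\subset\Omega$,
\[
P(E,B(x,3r)) \le C\,\frac{\mu(B(x,3r))}{r}.
\]
This comes from comparing $E$ with the competitor $F := E\setminus B(x,3r)$: the symmetric difference sits in $\overline{B(x,3r)}\Subset\Omega$, so $1$-quasiminimality gives $P(E,B(x,4r)) \le P(F,B(x,4r))$; since $F$ coincides with $E$ outside $\overline{B(x,3r)}$, subtracting the common annular contribution reduces matters to the ball-perimeter estimate $P(B(x,3r),X)\le C\mu(B(x,3r))/r$, itself a one-line coarea computation applied to the $1$-Lipschitz function $d(x,\cdot)$ on a thin annulus.

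\emph{Step 2 (contradiction).} Suppose no $y\in B(x,r)$ satisfies $B(y,r/C_P)\subset E\cap\Omega$; set $s:=r/C_P$, with $C_P\ge 1$ to be chosen. Since $x\in\Omega\cap\partial E$, the uniform density bound for regularized quasiminimal sets yields $\mu(B(x,r/2)\cap E)\ge\gamma_0\mu(B(x,r/2))$. A Vitali-type selection at radius $s/10$ produces a disjoint family $\{B(y_i,s/10)\}_{i=1}^N$ inside $E\cap B(x,r/2)$ with $N\ge c\,C_P^Q$ by doubling. The failure of porosity supplies for each $i$ a point $z_i\in B(y_i,s)\setminus E$; walking from $z_i$ to a nearby point $w_i\in\Omega\cap\partial E$ and applying the two-sided density of $E$ at $w_i$ produces a radius $\rho_i\in[s,Ks]$ on which both $E$ and $X\setminus E$ have mass bounded below by $\gamma_0\mu(B(w_i,\rho_i))$. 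The relative isoperimetric inequality then forces
\[
P(E,\lambda B(w_i,\rho_i)) \ge c\,\frac{\mu(B(w_i,\rho_i))}{\rho_i} \ge c\,\frac{\mu(B(y_i,s))}{s}.
\]
Thinning the family so that the $\lambda$-dilates remain disjoint inside $B(x,2r)$ (a constant-factor loss depending only on doubling and $\lambda$), summing, and using \eqref{eq:LMB} to convert $\mu(B(y_i,s))/s$ into a factor $c\,C_P^{-Q+1}\,\mu(B(x,r))/r$, yields
\[
P(E,B(x,2r))\;\ge\; c\,N\,\frac{\mu(B(y_i,s))}{s}\;\ge\; c\,C_P\,\frac{\mu(B(x,r))}{r}.
\]
Choosing $C_P$ large enough contradicts Step 1 and produces the desired $B(y,r/C_P)\subset E\cap\Omega$. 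Running the same argument with $X\setminus E$ in place of $E$—which is itself $1$-quasiminimal with the symmetric density property—delivers $B(z,r/C_P)\subset\Omega\setminus E$.

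The main obstacle I anticipate is the step that promotes the failure of porosity, which only guarantees a single point of $X\setminus E$ inside each $B(y_i,s)$, to a uniform lower bound on $\mu(B(w_i,\rho_i)\setminus E)$ on a ball of comparable radius; this is precisely what allows the relative isoperimetric inequality to contribute at all. The resolution is to walk from $z_i$ to a nearby regularized boundary point $w_i\in\Omega\cap\partial E$ and invoke the two-sided measure density there, which simultaneously forces $\rho_i$ to stay within a fixed multiple of $s$. Once this density upgrade and the bound on $\rho_i$ are secured, the rest is Vitali thinning and a counting step driven by the doubling dimension $Q$.
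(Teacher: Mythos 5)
This theorem is not proved in the paper at all: it is quoted verbatim from \cite[Theorem~5.2]{KKLS}, so there is no in-paper argument to compare yours against. Your sketch reconstructs what is essentially the argument of that reference — an upper perimeter bound from quasiminimality played against a lower bound assembled from the two-sided measure density and the relative isoperimetric inequality, with the factor $C_P$ gained from the shrinking radius — and Step 1 together with the overall architecture of Step 2 is sound.

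The one step that does not survive scrutiny as written is the count ``$N\ge c\,C_P^Q$ by doubling.'' Doubling gives an \emph{upper} bound on how many disjoint balls of radius $s/10$ fit in $B(x,r)$; a \emph{lower} bound on how many are needed to capture $\mu(E\cap B(x,r/2))\ge\gamma_0\mu(B(x,r/2))$ would require an upper bound $\mu(B(y_i,s))\lesssim (s/r)^Q\mu(B(x,r))$, i.e.\ reverse doubling with the same exponent $Q$ (Ahlfors regularity), which is not among the standing assumptions — the inequality \eqref{eq:LMB} goes the other way. Pairing the unjustified $N\gtrsim C_P^Q$ with the worst-case single-ball bound $\mu(B(y_i,s))\gtrsim C_P^{-Q}\mu(B(x,r))$ makes the two errors cancel, which is why your final display looks right. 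The robust version avoids factoring the sum as $N$ times a single term: take a maximal $s/5$-separated set $\{y_i\}$ in $E\cap B(x,r/2)$, so that the balls $B(y_i,s/5)$ cover $E\cap B(x,r/2)$ while the balls $B(y_i,s/10)$ are disjoint, whence
$$\sum_i\mu\bigl(B(y_i,s)\bigr)\ \ge\ \sum_i\mu\bigl(B(y_i,s/5)\bigr)\ \ge\ \mu\bigl(E\cap B(x,r/2)\bigr)\ \ge\ \gamma_0\,\mu\bigl(B(x,r/2)\bigr),$$
and then sum the isoperimetric lower bounds $P(E,\lambda B(w_i,2s))\ge c\,\mu(B(y_i,s))/s$ over one of the boundedly many disjoint color classes of the dilated balls to get $P(E,B(x,2r))\ge c\,\gamma_0\,C_P\,\mu(B(x,r))/(Mr)$. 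The factor $C_P$ comes purely from the $1/s$ in the isoperimetric inequality, not from a ball count, and with this repair your contradiction with Step 1 goes through. The remaining small points are fine: a ball $B(y_i,s)$ that misses $\partial E$ lies entirely in the interior or exterior of the regularized $E$, so the contradiction hypothesis directly yields $w_i\in\Omega\cap\partial E\cap B(y_i,s)$ without any ``walking,'' and the symmetry $E\leftrightarrow X\setminus E$ handles the second ball.
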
 

We conclude this section by using the above porosity property to prove the following useful lemma, which gives us a bound on the size of certain regularized minimal weak solutions to Dirichlet problems.

\begin{lem}\label{lem:SolnSetBound}
Let $\Omega$ be a bounded domain with $\mu(X\setminus\Omega)>0$, and suppose that the following hold:
\begin{enumerate}
\item[(i)] $\Ha|_{\partial\Omega}$ is lower codimension 1 Ahlfors regular \eqref{eq:LowerAR}, and $\Ha(\partial\Omega)<\infty$.
\item[(ii)] $\Ha(\{z\})=0$ for all $z\in\partial\Omega,$
\item[(iii)] $\partial\Omega$ satisfies the positive mean curvature condition (Definition~\ref{defn:PMC}).
\end{enumerate}
Then for all $z\in\partial\Omega$ and $R>0$ such that $X\setminus B(z,4R)\ne\varnothing$, there exists $0<r_0<R$ such that $E_{B(z,r_0)}\subset B(z,R),$ where $E_{B(z,r_0)}$ is the regularized minimal weak solution to the Dirichlet problem with boundary data $\chi_{B(z,r_0)}.$    
\end{lem}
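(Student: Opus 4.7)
The plan is to argue by contradiction. Suppose that for some $z\in\partial\Omega$ and $R>0$ with $X\setminus B(z,4R)\ne\varnothing$, no choice of $r_0\in(0,R)$ yields $E_{B(z,r_0)}\subset B(z,R)$.

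The starting point is to unpack the hypothesis $\Ha(\{z\})=0$. From \eqref{eq:HaMeasure}, using single-ball covers, this is equivalent to $\liminf_{r\to 0^+}\mu(B(z,r))/r=0$. Combined with the coarea inequality $\int_0^r P(B(z,s),X)\,ds\le\mu(B(z,r))$, obtained by applying the coarea formula to the $1$-Lipschitz function $\min(d(z,\cdot),r)$, this furnishes a sequence $r_k\to 0^+$ with $r_k<R$ and $P(B(z,r_k),X)\to 0$. Writing $E_k:=E_{B(z,r_k)}$ for the regularized minimal weak solution set, the competitor $\chi_{B(z,r_k)}$ forces $P(E_k,\overline\Omega)\le P(B(z,r_k),\overline\Omega)$; since $E_k$ agrees with $B(z,r_k)$ on $X\setminus\Omega$, we also have $P(E_k,X)\le 2P(B(z,r_k),X)\to 0$. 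Each $E_k\subset\Omega\cup B(z,r_k)$ sits in a common bounded set, so the relative isoperimetric inequality applied on a large enclosing ball (whose complement in $X$ has measure $\ge\mu(E_k)$ for large $k$) yields $\mu(E_k)\le C\,P(E_k,X)\to 0$.

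Next comes a slicing argument. For each $s\in(R/2,R)$ with $P(B(z,s),X)<\infty$, the set $E_k\cap B(z,s)$ retains the exterior data of $E_k$ (since $r_k<s$), so is admissible. By minimality of $E_k$ combined with the Leibniz rule \eqref{eq:Leibniz} applied to $\chi_{E_k}\chi_{B(z,s)}$,
\[
P(E_k,\overline\Omega\setminus\overline{B(z,s)})\le C\,\Ha\bigl(\partial^M B(z,s)\cap E_k^\vee\cap\overline\Omega\bigr).
\]
Integrating in $s$ and applying the coarea formula for $d(z,\cdot)$,
\[
\tfrac{R}{2}\,P(E_k,\overline\Omega\setminus\overline{B(z,R)})\le C\mu(E_k)\to 0.
\]
Averaging, one selects $s_k\in(R/2,R)$ with $\Ha(\partial^M B(z,s_k)\cap E_k^\vee\cap\overline\Omega)\le C\mu(E_k)/R$. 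For $F_k:=E_k\cap(\Omega\setminus\overline{B(z,s_k)})$, bounding its perimeter via the three pieces — on $\partial^M E_k\cap(\Omega\setminus\overline{B(z,s_k)})$, on $\partial B(z,s_k)\cap E_k^\vee\cap\overline\Omega$, and on $\partial\Omega\cap E_k^\vee\setminus\overline{B(z,s_k)}$ (the last controlled by $P(E_k,\overline\Omega)\to 0$) — gives $P(F_k,X)\to 0$. Then relative isoperimetric on a ball containing $F_k$ yields $\mu(F_k)\to 0$.

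For the contradiction, I would invoke porosity. Because $E_k$ is the \emph{minimal} weak solution set, whenever $\mu(F_k)>0$ the admissible set $E_k\setminus F_k\subsetneq E_k$ must satisfy $P(E_k\setminus F_k,\overline\Omega)>P(E_k,\overline\Omega)$ — otherwise $E_k\setminus F_k$ would be another perimeter minimizer, contradicting minimality in the $\mu$-a.e.\ ordering. By hypothesis $E_k\not\subset B(z,R)$, so $\mu(F_k)>0$; thus the regularized $E_k$ admits a boundary point $x_k\in\Omega\setminus\overline{B(z,R)}$. Porosity (Theorem~\ref{thm:Porosity}) produces a ball in $E_k$ of radius comparable to $\dist(x_k,\partial\Omega\cup\partial B(z,s_k))$: if this distance stays bounded below along a subsequence, we obtain $\mu(F_k)\ge c>0$, contradicting $\mu(F_k)\to 0$; otherwise $x_k$ accumulates on $\partial\Omega\setminus\overline{B(z,R)}$, and the lower codimension $1$ Ahlfors regularity of $\Ha|_{\partial\Omega}$ forces $\Ha(\partial\Omega\cap E_k^\vee\setminus\overline{B(z,R)})$ to stay above a positive constant, contradicting $P(E_k,\overline\Omega)\to 0$. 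The principal obstacle is this final step — upgrading the asymptotic vanishing $\mu(F_k)\to 0$ into actual vanishing of $F_k$ for some $r_0$ — and it is here that the hypothesis $\Ha(\{z\})=0$, responsible for the whole cascade via $P(B(z,r_k),X)\to 0$, proves indispensable, as Example~\ref{ex:Singletons} corroborates.
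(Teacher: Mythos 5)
The opening of your argument is sound, and in places more elementary than the paper's: extracting radii $r_k\to 0$ with $P(B(z,r_k),X)\to 0$ directly from $\Ha(\{z\})=0$ via $\liminf_{r\to 0}\mu(B(z,r))/r=0$ and the coarea inequality works (the paper instead goes through \cite[Lemma~6.2]{KKST} and lower Ahlfors regularity), and your route to $\mu(E_k)\to 0$ via the relative isoperimetric inequality is a legitimate alternative to the paper's compactness argument. The subsequent slicing paragraph is essentially redundant — $\mu(F_k)\le\mu(E_k)\to 0$ already — and the observation that removing $F_k$ must strictly increase perimeter is never actually used, but these are inefficiencies, not errors.

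The genuine gap is in your final dichotomy. Case A (where the boundary points $x_k$ of $E_k$ outside $\overline{B(z,R)}$ stay a fixed distance from $\partial\Omega$) is handled correctly by porosity, exactly as in the paper. But Case B — $x_k$ accumulating on $\partial\Omega\setminus\overline{B(z,R)}$ — is not closed. Lower codimension~$1$ Ahlfors regularity bounds $\Ha(B(y,r)\cap\partial\Omega)$ from below for the \emph{entire} boundary inside a ball; it says nothing about how much of $\partial\Omega$ lies in $E_k^\vee$. Since $\mu(E_k)\to 0$, the set $E_k$ could a priori approach $\partial\Omega$ along a thin spike whose measure-theoretic footprint on $\partial\Omega$ is a single point, which is $\Ha$-null by hypothesis~(ii); so there is no reason for $\Ha(\partial\Omega\cap E_k^\vee\setminus\overline{B(z,R)})$ to stay bounded below, and no contradiction with $P(E_k,\overline\Omega)\to 0$ follows. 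Tellingly, your proof never invokes hypothesis~(iii), the positive mean curvature of $\partial\Omega$, and this is precisely the tool the paper uses to rule Case~B out entirely: for each $y\in\partial\Omega\setminus B(z,R/2)$ one compares $E_k$ with the minimal weak solution set $E_{B(y,r)}$ for a small ball disjoint from $B(z,r_k)$ (using \cite[Lemma~3.3]{LMSS} applied to $1-\chi_{E_k}$) to get $\mu(E_{B(y,r)}\cap E_k)=0$; positive mean curvature then gives a ball $B(y,r_y)\sqsubset E_{B(y,r)}$, the measure density property of the regularized $E_k$ upgrades $\mu(B(y,r_y)\cap E_k)=0$ to $E_k\cap B(y,r_y)=\varnothing$, and compactness of $\partial\Omega\setminus B(z,R/2)$ yields a uniform collar avoided by every $E_k$. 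With that collar in hand, the points $x_k$ are compactly contained in $\Omega$ and only your Case~A survives. Without some such use of the curvature hypothesis, the argument does not go through.
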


\begin{proof}
Let $z\in\partial\Omega$ and $R>0$ such that $X\setminus B(z,4R)$ is nonempty.  Towards a contradiction, suppose that $E_{B(z,r)}\setminus B(z,R)\ne\varnothing$ for all $0<r<R$, where $E_{B(z,r)}$ is the regularized minimal weak solution to the Dirichlet problem with boundary data $\chi_{B(z,r)}.$  By \cite[Lemma~6.2]{KKST}, for each $0<r<R,$ there exists $\rho\in[r/2,r]$ such that $B(z,\rho)$ is of finite perimeter in $X$ and $$P(B(z,\rho),X)\simeq\frac{\mu(B(z,\rho)}{\rho}.$$  Therefore, we can choose a sequence $0<r_k<R/4$ such that $r_k\to 0$ as $k\to\infty,$ where $P(B(z,r_k),X)<\infty$, $\Ha(\partial\Omega\cap\partial B(z,r_k))=0,$ and 
$$P(B(z,r_k),X)\simeq\frac{\mu(B(z,r_k))}{r_k}.$$ 
By lower codimension 1 Ahlfors regularity, we have that 
\[
P(B(z,r_k),X)\le C\Ha(B(z,r_k)\cap\partial\Omega),
\]
and since $\Ha(\{z\})=0,$ it follows that 
\begin{equation*}
\lim_{k\to\infty}P(B(z,r_k),X)=0.
\end{equation*}

Letting $f_k:=\chi_{B(z,r_k)},$ we have that $f_k\to 0$ in $BV(X).$  Letting $E_k:=E_{B(z,r_k)}$ be the regularized minimal weak solution set to the Dirichlet problem with boundary data $f_k$, it follows from \cite[Proposition~3.3]{HKLS} that there exists a set $E\subset X$ of finite perimeter such that $\chi_{E_k}\to \chi_E$ in $L^1(\Omega)$ by passing to a subsequence if necessary.  In fact, the sequence converges pointwise monotonically a.e.\ as well (see \cite[Lemma~3.8]{LMSS}).  Moreover, $\chi_E$ is a weak solution to the Dirichlet problem with zero boundary values.  Hence we have that $P(E,X)=0,$ and from the relative isoperimetric inequality it follows that $\mu(E)=0.$  Thus, we have that 
\begin{equation}\label{eq:MeasToZero}
\lim_{k\to\infty}\mu(E_k)=0.
\end{equation}  

  For each $y\in\partial\Omega\setminus B(z,R/2),$ there exists $r>0$ such that $P(B(y,r),X)<\infty$ and $B(y,r)\cap B(z,r_k)=\varnothing$ for all $k\in\N.$  This is possible since $r_k<R/4$ for all $k\in\N.$  Noting that $1-\chi_{E_k}$ is a weak solution to the Dirichlet problem with boundary data $\chi_{X\setminus B(z,r_k)},$ it follows from \cite[Lemma~3.3]{LMSS} that $\mu(E_{B(y,r)}\cap E_k)=0,$ where $E_{B(y,r)}$ is the minimal weak solution to the Dirichlet problem with boundary data $\chi_{B(y,r)}.$  By positive mean curvature, there exists $r_y>0$ such that $B(y,r_y)\sqsubset E_{B(y,r)},$ hence $\mu(B(y,r_y)\cap E_k)=0.$  Thus, $B(y,r_y)$ contains no interior points of $E_k.$  If there exists $\zeta\in\partial E_k\cap B(y,r_y),$ then since $E_k$ is a regularized weak solution and thus satisfies the uniform measure density condition, there exists $\tau>0$ such that $B(\zeta,2\tau)\subset\Omega$ and  
$$0<\mu(B(\zeta,\tau)\cap E_k)\le \mu(B(y,r_y)\cap E_k),$$ a contradiction.  Therefore, we have that $E_k\cap B(y,r_y)=\varnothing.$

 By compactness, we can cover $\partial\Omega\setminus B(z,R/2)$ with a finite collection of balls $\{B(y_i,r_{y_i})\}_i$ such that $B(y_i,r_{y_i})\cap E_k=\varnothing$ for all $k\in\N.$  Since this covering is independent of $k$, it follows from a Lebesgue number argument that
 \[
\inf_k\dist(E_k,\partial\Omega\setminus B(z,R/2))>0.
\]
  
By our original supposition, there exists $x_k\in E_k\setminus B(z,R)$.  For each $k\in\N$, we have that $\dist(x_k,\partial\Omega\cap B(z,R/2))\ge R/2,$ and so it follows that 
\[
\inf_k\dist(x_k,\partial\Omega)>0.
\]
Thus the sequence $\{x_k\}_k$ is compactly contained in $\Omega,$ and so there exists $x_0\in\Omega$ and a subsequence, also denoted $\{x_k\}_k$, such that $x_k\to x_0$ as $k\to\infty.$ 

Since $X$ is complete and $\mu$ is doubling and supports a $(1,1)$-Poincar\'e inequality, it follows that $X$ is $A$-quasiconvex with $A\ge 1$ depending only on the doubling and Poincar\'e constants, see \cite{HKST} for example.  Choose $r_0>0$ sufficiently small so that $B(x_0,40Ar_0)\subset\Omega.$  Choose $k\in\N$ sufficiently large such that $x_k\in B(x_0,Ar_0)$ and 
\begin{equation}\label{eq:E_kBound}
\mu(E_k)<\frac{\mu(B(x_0,r_0))}{C_D(3C_PA)^Q},
\end{equation}
where $C_D$ and $Q$ are from \eqref{eq:LMB}, and $C_P$ is the constant from Theorem~\ref{thm:Porosity}.  This is possible by \eqref{eq:MeasToZero}.  By the doubling property, we have that for sufficiently large $k,$
$$\mu(B(x_k,r_0))\ge\frac{\mu(B(x_0,Ar_0))}{C_DA^Q}\ge\frac{\mu(B(x_0,r_0))}{C_D(3C_PA)^Q},$$
and so by \eqref{eq:E_kBound}, there exists $y\in B(x_k,r_0)\setminus E_k.$  Recall that $C_P\ge 1.$  

By quasiconvexity of $X,$ there exists a curve $\gamma$ joining $x_k$ to $y$ such that $$\ell(\gamma)\le Ad(x_k,y)\le Ar_0,$$ and so there exists $y_k\in\partial E_k\cap B(x_k,Ar_0).$  By our choices we have that $B(y_k,20Ar_0)\subset\Omega,$ and so by \cite[Theorem~5.2]{KKLS}, there exist points $x,x'\in B(y_k,Ar_0)$ such that 
$$B(x,Ar_0/C_P)\subset E_k\cap\Omega\quad\text{and}\quad B(x',Ar_0/C_P)\subset \Omega\setminus E_k.$$
Note that $x\in B(x_0,3Ar_0),$ and so from the doubling property and the fact that $A\ge 1$, we have that 
\begin{align*}
\mu(E_k)\ge\mu(B(x,Ar_0/C_P))\ge\frac{\mu(B(x_0,r_0))}{C_D(3C_P)^Q}\ge\frac{\mu(B(x_0,r_0))}{C_D(3C_PA)^Q}.
\end{align*}
This contradicts \eqref{eq:E_kBound}, completing the proof.   
\end{proof}

We note that the conclusion of Lemma~\ref{lem:SolnSetBound} may fail if one removes the assumption that all singletons on the boundary are $\Ha$-negligible, as shown by the following example:
\begin{example}\label{ex:Singletons}
Equip $\R$ with the measure $\mu(x)=\omega(x)d\Leb(x)$ where 
\[
\omega(x)=
\begin{cases}
\frac{1}{2}|x|+\frac{1}{2},&-1\le x\le 1\\
1,&1<|x|,
\end{cases} 
\]
and consider $\Omega=(-1,1)\subset\R.$  Then $\Ha(\{-1\})=\Ha(\{1\})=1,$ and $\Ha$ is lower codimension 1 Ahlfors regular on $\partial\Omega.$  Moreover, if we consider the ball centered at one of the boundary points, say $B(1,r)=(1-r,1+r),$ for any $0<r\le 2,$ we see that the minimal weak solution to the Dirichlet problem with boundary data $\chi_{B(1,r)}$ is the interval $(0,1+r).$  This is due to the fact that the weight $\omega$ attains a minimum at the origin, and so sets agreeing with $B(1,r)$ outside of $\Omega$ have smallest perimeter when they are intervals with a left endpoint at the origin.  Likewise, for $0<r\le 2,$ the minimal weak solution to the Dirichlet problem with boundary data $\chi_{B(-1,r)}$ is the interval $(-1-r,0).$  Hence, $\partial\Omega$ is of positive mean curvature in the sense of Definition~\ref{defn:PMC}, but the conclusion of Lemma~\ref{lem:SolnSetBound} fails.   
\end{example}

\section{Preliminary results for open set obstacles and data}

In this section, we assume that $\Omega\subset X$ is a bounded domain such that $\mu(X\setminus\Omega)>0$, $\Ha(\partial\Omega)<\infty$, $\Ha|_{\partial\Omega}$ is doubling, and that $\Ha|_{\partial\Omega}$ is lower codimension 1 Ahlfors regular as in \eqref{eq:LowerAR}. Furthermore, we assume that $\Ha(\{z\})=0$ for all $z\in\partial\Omega.$

In order to construct strong solutions to the $\K_{\psi_1,\psi_2,f,g}$-obstacle problem for continuous $\psi_1,\psi_2,f$ and $g$, we first construct strong solutions for the problem when obstacles and data consist of superlevel sets of the original obstacle and boundary functions.  To do this, we first show existence of certain weak solutions. However, since we consider a problem with a double boundary condition as opposed to a fixed single boundary condition, it becomes difficult to prove existence using a definition of weak solutions similar to Definition~\ref{defn:WeakSolnDP}.  For this reason, we begin by proving existence of weak $\eps$-solutions, as in Definition~\ref{defn:WeakEpsSoln}.    

In this section we prove results for obstacle sets $A,B$ and boundary sets $F,G$ which have very specific properties (see the hypotheses of Proposition~\ref{prop:StrongSolnSet}, for example).  As we shall see in Section 4, by assuming the hypothesis that $\K_{\psi_1,\psi,f,g}(\Omega)\ne\varnothing$ for $\psi_1,\psi_2\in C(\overline\Omega)$ and $f,g\in C(\partial\Omega)$ in Theorem~\ref{thm:ExistStrongSoln}, it follows that almost all of the superlevel sets of the obstacle and boundary functions will have the  properties used in this section.

\begin{lem}\label{lem:EpsWeakSoln}
Let $A,B\subset\overline\Omega$ and $F,G\subset X$ be Borel measurable, and suppose that $\wtil\K_{A,B,F,G}(\Omega)\cap BV_\loc(X)\ne\varnothing$.  Then for every $\eps>0$, there exists an $\eps$-weak solution to the $\K_{A,B,F,G}$-obstacle problem.  Furthermore, an $\eps$-weak solution exists of the form $u=\chi_{E_\eps}$ for some measurable set $E_\eps\subset X.$    
\end{lem}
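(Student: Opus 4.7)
The plan is to apply the direct method of the calculus of variations on the bounded open set $\Omega_\eps$ to obtain an $\eps$-weak solution, and then to use the coarea formula to extract from it an $\eps$-weak solution of the form $\chi_{E_\eps}$.

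First I would observe that because $\wtil\K_{A,B,F,G}(\Omega)\ne\varnothing$, feasibility of the pointwise inequalities $\chi_A\le u\le\chi_B$ in $\Omega$ and $\chi_F\le u\le\chi_G$ in $X\setminus\Omega$ forces every member of $\wtil\K_{A,B,F,G}(\Omega)$ to take values in $[0,1]$. Fix $u_0\in\wtil\K_{A,B,F,G}(\Omega)\cap BV_\loc(X)$; then $m:=\inf\{\|Dv\|(\Omega_\eps):v\in\wtil\K_{A,B,F,G}(\Omega)\}\le\|Du_0\|(\Omega_\eps)<\infty$. Choose a minimizing sequence $\{u_n\}\subset\wtil\K_{A,B,F,G}(\Omega)$ with $\|Du_n\|(\Omega_\eps)\to m$. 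Since $\Omega_\eps$ is bounded and $0\le u_n\le 1$, the restrictions $\{u_n|_{\Omega_\eps}\}$ are uniformly bounded in $BV(\Omega_\eps)$, and the standard BV compactness result in doubling metric measure spaces supporting a $(1,1)$-Poincar\'e inequality \cite{M} yields, after passage to a subsequence, a function $\bar u\in L^1(\Omega_\eps)$ with $u_n\to\bar u$ in $L^1(\Omega_\eps)$ and $\mu$-a.e.\ on $\Omega_\eps$.

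Next I would extend $\bar u$ to $X$ by setting $\bar u:=u_0$ on $X\setminus\Omega_\eps$, and then redefine $\bar u$ on an appropriate $\mu$-null subset of $\Omega_\eps$ so that $\bar u=1$ pointwise on $A$, $\bar u=0$ pointwise on $\Omega\setminus B$, $\bar u=1$ pointwise on $F\cap(\Omega_\eps\setminus\Omega)$, and $\bar u=0$ pointwise on $(\Omega_\eps\setminus\Omega)\setminus G$; this modification is legitimate because $\mu$-a.e.\ convergence of the subsequence together with the pointwise constraints on each $u_n$ already guarantee these equalities $\mu$-a.e. The resulting representative lies in $\wtil\K_{A,B,F,G}(\Omega)$. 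Moreover, since $\|D\cdot\|(\Omega_\eps)$ depends only on the restriction to the open set $\Omega_\eps$ and is unaffected by null-set modifications, the $L^1$ lower semicontinuity of total variation \cite[Theorem~3.4]{M} yields $\|D\bar u\|(\Omega_\eps)\le\liminf_n\|Du_n\|(\Omega_\eps)=m$, so $\bar u$ is an $\eps$-weak solution.

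Finally, to obtain a characteristic function solution, I would apply the coarea formula to $\bar u$; since $0\le\bar u\le 1$, this reads
\[
m=\|D\bar u\|(\Omega_\eps)=\int_0^1 P(\{\bar u>t\},\Omega_\eps)\,dt.
\]
For each $t\in(0,1)$, the pointwise inequalities on $\bar u$ established above imply $A\subset\{\bar u>t\}\cap\Omega\subset B$ and $F\cap(X\setminus\Omega)\subset\{\bar u>t\}\cap(X\setminus\Omega)\subset G\cap(X\setminus\Omega)$, so $\chi_{\{\bar u>t\}}\in\wtil\K_{A,B,F,G}(\Omega)$; a mean value argument then produces $t_0\in(0,1)$ with $P(\{\bar u>t_0\},\Omega_\eps)\le m$, and $E_\eps:=\{\bar u>t_0\}$ is the desired set. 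The main technical subtlety of the argument lies in the previous paragraph, namely in choosing a pointwise representative of the $L^1$-limit that satisfies the pointwise everywhere inequalities required for membership in $\wtil\K_{A,B,F,G}(\Omega)$ without perturbing either the $L^1_\loc$ equivalence class or the value of $\|D\bar u\|(\Omega_\eps)$.
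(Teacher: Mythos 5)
Your proposal is correct and follows essentially the same route as the paper: direct method on $\Omega_\eps$ with BV compactness and lower semicontinuity, a null-set modification to restore the pointwise-everywhere constraints, and the coarea formula to extract a superlevel set whose characteristic function is again admissible. The only cosmetic differences are that you extend the limit by $u_0$ rather than by $\chi_F$ outside $\Omega_\eps$ and deduce $0\le u\le 1$ directly from the constraints instead of by truncation; both variants are equally valid.
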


We note that $\wtil\K_{A,B,F,G}(\Omega)\ne\varnothing$ implies that $A\subset B$ and $F\setminus\Omega\subset G\setminus\Omega.$

\begin{proof}
For each $\eps>0,$ let 
$$\lambda_\eps:=\inf\{\|Du\|(\Omega_\eps):u\in\wtil\K_{A,B,F,G}(\Omega)\}<\infty.$$  Since $\wtil\K_{A,B,F,G}(\Omega)\cap BV_\loc(X)\ne\varnothing$, we have that $\lambda_\eps<\infty.$  For each $k\in\N,$ let $u_k^\eps\in\wtil\K_{A,B,F,G}(\Omega)$ be such that $$\|Du_k^\eps\|(\Omega_\eps)\to\lambda_\eps$$ as $k\to\infty.$  Since truncation will not increase the BV energy, and since the obstacles and boundary conditions are characteristic functions of sets, without loss of generality, we may assume that $0\le u_k^\eps\le 1.$  Thus, we have that 
$$\sup_k\|u_k^\eps\|_{L^1(\Omega_\eps)}\le\mu(\Omega_\eps)<\infty,$$
and so $\{u_k^\eps\}_{k\in\N}$ is bounded in $BV(\Omega_\eps).$  Therefore by \cite[Theorem~3.7]{M}, there exists a subsequence (not relabeled) and a function $u_\eps\in BV(\Omega_\eps)$ such that $u_k^\eps\to u_\eps$ in $L^1(\Omega_\eps)$ and pointwise $\mu$-a.e.\ in $\Omega_\eps.$  By lower semicontinuity of the BV-energy, we have that 
$$\|Du_\eps\|(\Omega_\eps)\le\liminf_{k\to\infty}\|Du_k^\eps\|(\Omega_\eps)=\lambda_\eps.$$   
By modifying $u_\eps$ on a set of $\mu$-measure zero if necessary, hence not changing the BV-energy, we may assume that $\chi_A\le u_\eps\le\chi_B$ everywhere in $\Omega$ and $\chi_F\le u_\eps\le\chi_G$ everywhere in $\Omega_\eps\setminus\Omega.$  Extending $u_\eps$ outside $\Omega_\eps$ by $\chi_F,$ we have that $u_\eps\in\wtil\K_{A,B,F,G}(\Omega)$, and so it follows that $\|Du_\eps\|(\Omega_\eps)=\lambda_\eps.$  Thus, $u_\eps$ is an $\eps$-weak solution to the $\K_{A,B,F,G}$-obstacle problem.  Moreover, by the coarea formula we have that 
$$\|Du_\eps\|(\Omega_\eps)=\int_0^1 P(\{u_\eps>t\},\Omega_\eps)dt,$$ and so there exists $0<t<1$ such that $P(\{u_\eps>t\},\Omega_\eps)\le\|Du_\eps\|(\Omega_\eps).$  Therefore letting $E_\eps:=\{u_\eps>t\},$ we have that $\chi_A\le\chi_{E_\eps}\le\chi_B$ in $\Omega$ and $\chi_F\le\chi_{E_\eps}\le\chi_G$ in $X\setminus\Omega$ since $u_\eps\in\wtil\K_{A,B,F,G}(\Omega).$  Thus $\chi_{E_\eps}\in\wtil\K_{A,B,F,G}(\Omega),$ and so it follows that $u_\eps=\chi_{E_\eps}$ is an $\eps$-weak solution as well.
\end{proof}

Since we wish to prove that strong solutions exist when obstacles and boundary data consist of open sets, we need to ensure that the traces behave as we wish.  We will use the following lemma, which requires positive mean curvature of the boundary of the domain as in Definition~\ref{defn:PMC}.

\begin{lem}\label{lem:WeakSolnTrace}
Let $\Omega$ be a bounded domain with boundary of positive mean curvature.  Let $A,B\subset\overline\Omega$ be relatively open, let $F\subset G\subset X$ be open, and suppose that $\wtil\K_{A,B,F,G}(\Omega)\cap BV_\loc(X)\ne\varnothing$.  If $z\in\partial\Omega\cap F\cap B,$ then there exists $\rho_z>0$ such that for all $\eps>0$ and for all $\eps$-weak solution sets $E_\eps$ to the $\K_{A,B,F,G}$-obstacle problem, we have that $$\mu(B(z,\rho_z)\setminus E_\eps)=0.$$ 

Suppose in addition that $\chi_A\le\chi_G$ $\Ha$-a.e.\ on $\partial\Omega$ and that $\Ha(\partial\Omega\cap\partial_{\overline\Omega} A)=0,$ where $\partial_{\overline\Omega}A$ is the boundary of $A$ relative to $\overline\Omega.$  Then for $\Ha$-a.e.\ $z\in\partial\Omega\setminus\overline G,$ there exists $\rho_z>0$ such that for all $\eps>0$ and for all $\eps$-weak solution sets $E_\eps$ to the $\K_{A,B,F,G}$-obstacle problem, we have that 
$$\mu(B(z,\rho_z)\cap E_\eps)=0.$$
\end{lem}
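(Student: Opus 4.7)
The plan is to find, for $z \in \partial\Omega \cap F \cap B$ (respectively, for $\Ha$-a.e.\ $z \in \partial\Omega \setminus \overline G$), a single ball $B(z,\rho_z)$ which lies inside (respectively, is disjoint from) every $\eps$-weak solution set $E_\eps$, up to $\mu$-null sets.  The common strategy is to compare $\chi_{E_\eps}$ with the competitor $\chi_{E_\eps \cup E_{B(z,r_0)}}$ in the first case and $\chi_{E_\eps \setminus E_{B(z,r_0)}}$ in the second, where $E_{B(z,r_0)}$ is the regularized minimal weak solution set to the Dirichlet problem with boundary data $\chi_{B(z,r_0)}$ produced by Lemma~\ref{lem:SolnSetBound}, and then to exploit submodularity of the perimeter together with minimality of $E_{B(z,r_0)}$ to force the desired containment or disjointness.

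For the first assertion, since $F$ and $B$ are (relatively) open and both contain $z$, I would pick $R>0$ so that $B(z,4R) \subset F$, $B(z,4R) \cap \overline\Omega \subset B$, $R<r_z$ (the constant from positive mean curvature at $z$), and $X \setminus B(z,4R) \ne \varnothing$.  Lemma~\ref{lem:SolnSetBound} then yields $r_0 \in (0,R)$ with $E_{B(z,r_0)} \subset B(z,R)$, and this confinement makes $E_\eps \cup E_{B(z,r_0)}$ stay inside $B$ on $\Omega$ and inside $G$ on $X \setminus \Omega$ while still containing $A$ and $F$; hence $\chi_{E_\eps \cup E_{B(z,r_0)}}$, after adjustment on a null set, lies in $\wtil\K_{A,B,F,G}(\Omega)$.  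The $\eps$-weak solution property and submodularity
\[
P(E_\eps \cup E_{B(z,r_0)}, \Omega_\eps) + P(E_\eps \cap E_{B(z,r_0)}, \Omega_\eps) \le P(E_\eps, \Omega_\eps) + P(E_{B(z,r_0)}, \Omega_\eps)
\]
yield $P(E_\eps \cap E_{B(z,r_0)}, \Omega_\eps) \le P(E_{B(z,r_0)}, \Omega_\eps)$.  Because $E_{B(z,r_0)} = B(z,r_0) \subset F \subset E_\eps$ on $X \setminus \Omega$, the sets $E_\eps \cap E_{B(z,r_0)}$ and $E_{B(z,r_0)}$ agree on the open set $\Omega_\eps \setminus \overline\Omega$, so the perimeters there cancel and the inequality reduces to $\|D\chi_{E_\eps \cap E_{B(z,r_0)}}\|(\overline\Omega) \le \|D\chi_{E_{B(z,r_0)}}\|(\overline\Omega)$.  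Minimality of $E_{B(z,r_0)}$ forces equality, hence $E_{B(z,r_0)} \sqsubset E_\eps \cap E_{B(z,r_0)} \sqsubset E_\eps$.  Positive mean curvature then gives $B(z,\phi_z(r_0)) \sqsubset E_{B(z,r_0)}$, so $\rho_z := \phi_z(r_0)$ works.

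The second assertion is dual.  For $\Ha$-a.e.\ $z \in \partial\Omega \setminus \overline G$, the additional hypotheses $\chi_A \le \chi_G$ $\Ha$-a.e.\ on $\partial\Omega$ and $\Ha(\partial\Omega \cap \partial_{\overline\Omega} A) = 0$ place $z$ outside $A$ and outside $\partial_{\overline\Omega} A$, so $z$ lies in the relative interior of $\overline\Omega \setminus A$.  I can thus choose $R>0$ with $B(z,4R) \cap (\overline G \cup A) = \varnothing$ and $R<r_z$, and Lemma~\ref{lem:SolnSetBound} again provides $r_0 \in (0,R)$ with $E_{B(z,r_0)} \subset B(z,R)$.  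The relation $B(z,R) \cap \overline G = \varnothing$ ensures $B(z,r_0) \cap F = \varnothing$, which protects the lower boundary condition after we remove $E_{B(z,r_0)}$, while $B(z,R) \cap A = \varnothing$ protects the lower obstacle condition; thus $\chi_{E_\eps \setminus E_{B(z,r_0)}} \in \wtil\K_{A,B,F,G}(\Omega)$.  Submodularity applied to $E_\eps$ and $X \setminus E_{B(z,r_0)}$, combined with the $\eps$-weak solution inequality, produces
\[
\|D\chi_{E_{B(z,r_0)} \setminus E_\eps}\|(\Omega_\eps) \le \|D\chi_{E_{B(z,r_0)}}\|(\Omega_\eps),
\]
and the same agreement-off-$\Omega$ trick reduces this to a comparison on $\overline\Omega$; minimality of $E_{B(z,r_0)}$ then forces $E_\eps \cap E_{B(z,r_0)}$ to be $\mu$-null, and positive mean curvature closes the argument exactly as before.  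The main obstacle throughout is the bookkeeping needed to confirm $\wtil\K$-membership of the competitors in the pointwise \emph{everywhere} sense, and to correctly cancel the BV energies on $\Omega_\eps \setminus \overline\Omega$; both steps rely on the confinement $E_{B(z,r_0)} \subset B(z,R)$ supplied by Lemma~\ref{lem:SolnSetBound}, which is where the hypothesis $\Ha(\{z\}) = 0$ for $z \in \partial\Omega$ enters.
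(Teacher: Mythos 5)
Your proposal is correct and follows essentially the same route as the paper: confine the regularized minimal Dirichlet weak solution set $E_{B(z,r_0)}$ inside a small ball via Lemma~\ref{lem:SolnSetBound}, compare $E_\eps$ with $E_\eps\cup E_{B(z,r_0)}$ (resp.\ $E_\eps\setminus E_{B(z,r_0)}$) using submodularity of perimeter and the $\eps$-weak minimality, cancel the energies off $\overline\Omega$, and conclude from minimality of $E_{B(z,r_0)}$ and positive mean curvature. The only difference is cosmetic: you run the submodularity inequality on $\Omega_\eps$ and subtract the exterior contribution, while the paper runs it on $\overline\Omega$ and argues by contradiction; these are equivalent.
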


\begin{proof}
We begin by proving the first assertion.  Let $z\in\partial\Omega\cap F\cap B.$  Then there exists $r>0$ such that $B(z,r)\subset F\subset G$ and $B(z,r)\cap\overline\Omega\subset B.$  By Lemma~\ref{lem:SolnSetBound}, there exists $0<r_0<r$ such that $E_0=E_{B(z,r_0)}\subset B(z,r)\subset F,$ where $E_0$ is the regularized minimal weak solution to the Dirichlet problem with boundary data $\chi_{B(z,r_0)}.$  By the positive mean curvature condition, there exists $\rho_z>0$ such that $B(z,\rho_z)\sqsubset E_{0}.$

Now let $\eps>0$ and let $E_\eps$ be an $\eps$-weak solution set to the $\K_{A,B,F,G}$-obstacle problem.  By \cite[Proposition~4.7]{M}, we have that 
$$P(E_0\cap E_\eps,\overline\Omega)+P(E_0\cup E_\eps,\overline\Omega)\le P(E_0,\overline\Omega)+P(E_\eps,\overline\Omega).$$
If $P(E_0\cap E_\eps,\overline\Omega)>P(E_0,\overline\Omega),$ then we have that $P(E_0\cup E_\eps,\overline\Omega)<P(E_\eps,\overline\Omega).$  In this case, 
\begin{align*}
P(E_0\cup E_\eps,\Omega_\eps)&=P(E_0\cup E_\eps,\overline\Omega)+P(E_0\cup E_\eps,\Omega_\eps\setminus\overline\Omega)\\
	&<P(E_\eps,\overline\Omega)+P(E_0\cup E_\eps,\Omega_\eps\setminus\overline\Omega).
\end{align*} 
Since $E_0\subset F$ and $\chi_{E_\eps}\in\wtil K_{A,B,F,G}(\Omega),$ we have that $E_0\setminus\overline\Omega\subset F\setminus\overline\Omega\subset E_\eps\setminus\overline\Omega.$  Therefore $(E_0\cup E_\eps)\setminus\overline\Omega=E_\eps\setminus\overline\Omega,$ and so it follows that 
\begin{align*}
P(E_0\cup E_\eps,\Omega_\eps)&<P(E_\eps,\overline\Omega)+P(E_\eps,\Omega_\eps\setminus\overline\Omega)=P(E_\eps,\Omega_\eps).
\end{align*}
This is a contradiction, since $E_\eps$ is an $\eps$-weak solution to the $\K_{A,B,F,G}$-obstacle problem and $\chi_{E_0\cup E_\eps}\in\wtil K_{A,B,F,G}(\Omega)$ by the choice of $r_0.$  Thus, we have that $P(E_0\cap E_\eps,\overline\Omega)\le P(E_0,\overline\Omega),$ and so $E_0\cap E_\eps$ is a weak solution to the Dirichlet problem with boundary data $B(z,r_0).$  Since $E_0$ is the minimal weak solution to the said Dirichlet problem, we have that $E_0\sqsubset (E_0\cap E_\eps),$ and so it follows that $E_0\sqsubset E_\eps.$  Since $B(z,\rho_z)\sqsubset E_0,$ we have that $$\mu(B(z,\rho_z)\setminus E_\eps)=0.$$ 

To prove the second assertion, let $z\in\partial\Omega\setminus\overline G$ such that $\dist(z,A)>0.$  Since $\Ha(\partial\Omega\cap\partial_{\overline\Omega} A)=0$ and since $\chi_A\le\chi_G$ $\Ha$-a.e.\ on $\partial\Omega,$ this holds for $\Ha$-a.e.\ $z\in\partial\Omega\setminus\overline G.$ Let $r>0$ be such that $B(z,r)\cap(\overline G\cup A)=\varnothing.$  As above, via Lemma~\ref{lem:SolnSetBound}, choose $r_0>0$ sufficiently small so that $E_0\subset B(z,r),$ where $E_0$ is the regularized minimal weak solution set to the Dirichlet problem with boundary data $\chi_{B(z,r_0)}.$  By positive mean curvature, there exists $\rho_z>0$ such that $B(z,\rho_z)\sqsubset E_0.$  

Now let $\eps>0$ and let $E_\eps$ be an $\eps$-weak solution to the $\K_{A,B,F,G}$-obstacle problem.  Again using \cite[Proposition 4.7]{M}, we have that 
$$P(E_0\setminus E_\eps,\overline\Omega)+P(E_\eps\setminus E_0,\overline\Omega)\le P(E_0,\overline\Omega)+P(E_\eps,\overline\Omega).$$  
We use the same argument as above.  If $P(E_0\setminus E_\eps,\overline\Omega)>P(E_0,\overline\Omega),$ then we would have that $P(E_\eps\setminus E_0,\Omega_\eps)<P(E_\eps,\Omega_\eps),$ a contradiction.  Therefore, it follows that $P(E_0\setminus E_\eps,\overline\Omega)\le P(E_0,\overline\Omega),$ and minimality of the weak solution $E_0$ implies that $\mu(E_0\cap E_\eps)=0.$  Therefore, since $B(z,\rho_z)\sqsubset E_0,$ it follows that \[
\mu(B(z,\rho_z)\cap E_\eps)=0.\qedhere
\]
\end{proof}

We are now ready to prove the main result of this section.  Note that we now assume that $\Omega$ is a uniform domain in order to accommodate extensions of functions in $BV(\Omega)$ to $BV(X)$ in the manner of \cite{L1}. 

\begin{prop}\label{prop:StrongSolnSet}
Let $\Omega$ be a bounded uniform domain with boundary of positive mean curvature.  Let $A,B\subset\overline\Omega$ be relatively open, let $F\subset G\subset X$ be open, and suppose that $\wtil\K_{A,B,F,G}(\Omega)\cap BV_\loc(X)\ne\varnothing$.  In addition, suppose that 
\begin{enumerate}
\item[(i)] $P(F,X),P(G,X)<\infty,$
\item[(ii)] $\Ha(\partial\Omega\cap(\partial_{\overline\Omega} A\cup\partial F\cup\partial G))=0,$
\item[(iii)] $\chi_F\le\chi_B$ and $\chi_A\le\chi_G$ $\Ha$-a.e.\ on $\partial\Omega.$
\end{enumerate}  
Then there exists a strong solution to the $\K_{A,B,F,G}$-obstacle problem. 
\end{prop}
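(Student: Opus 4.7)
The plan is to construct the strong solution as a subsequential limit of $\eps$-weak solutions as $\eps\downarrow 0$. For each $\eps>0$, Lemma~\ref{lem:EpsWeakSoln} supplies an $\eps$-weak solution of the form $\chi_{E_\eps}$. Fixing once and for all a reference competitor $w\in\wtil\K_{A,B,F,G}(\Omega)\cap BV_\loc(X)$ and comparing $\chi_{E_\eps}$ to it on $\Omega_1$ gives the uniform bound $\|D\chi_{E_\eps}\|(\Omega_\eps)\le\|Dw\|(\Omega_1)<\infty$ for all $0<\eps\le 1$. Together with $0\le\chi_{E_\eps}\le 1$ this produces compactness in $BV(\Omega_1)$, so I would extract a sequence $\eps_k\downarrow 0$ and a measurable set $E\subset X$ with $\chi_{E_{\eps_k}}\to\chi_E$ in $L^1_\loc(X)$ and $\mu$-a.e.\ (passing to a further subsequence, the $\{0,1\}$-valued a.e.\ limit is a characteristic function).

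Next I would verify that $\chi_E|_\Omega\in\K_{A,B,F,G}(\Omega)$. The obstacle inequalities $\chi_A\le\chi_E\le\chi_B$ in $\Omega$ pass to the a.e.\ limit, and lower semicontinuity gives $\|D\chi_E\|(\Omega)<\infty$, so $T\chi_E$ is defined $\Ha$-a.e.\ on $\partial\Omega$. For the boundary condition I invoke Lemma~\ref{lem:WeakSolnTrace}: hypothesis~(ii) discards the $\Ha$-null exceptional set $\partial\Omega\cap(\partial_{\overline\Omega}A\cup\partial F\cup\partial G)$. Then for $\Ha$-a.e.\ $z\in\partial\Omega$, either $z\in F$ (so by~(iii) also $z\in B$, and the first part of Lemma~\ref{lem:WeakSolnTrace} produces a single $\rho_z>0$ \emph{independent of $\eps$} with $\mu(B(z,\rho_z)\setminus E_\eps)=0$, forcing $T\chi_E(z)=1=\chi_F(z)$); or $z\notin\overline G$ (whence the second part of Lemma~\ref{lem:WeakSolnTrace} gives $\mu(B(z,\rho_z)\cap E_\eps)=0$ and $T\chi_E(z)=0=\chi_G(z)$); or $z\in G\setminus\overline F$, in which case the bound $0\le T\chi_E(z)\le 1$ is automatic.

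For minimality, given $v\in\K_{A,B,F,G}(\Omega)$ truncated to $[0,1]$ (which does not increase BV energy), I extend $v$ to $\til v\in BV(X)\cap L^\infty(X)$ using the strong BV extension for uniform domains of \cite{L1}, chosen so that the inner and outer traces of $\til v$ on $\partial\Omega$ both coincide with $Tv$ $\Ha$-a.e. I then define the competitor
\[
\hat v:=\begin{cases} v & \text{in } \Omega,\\ \max\bigl(\chi_F,\min(\til v,\chi_G)\bigr) & \text{in } X\setminus\Omega,\end{cases}
\]
which lies in $\wtil\K_{A,B,F,G}(\Omega)$. A case analysis at each $z\in\partial\Omega\setminus(\partial F\cup\partial G)$, according to whether $z\in F$, $z\notin\overline G$, or $z\in G\setminus\overline F$ (each of which is open near $z$, and where $Tv(z)$ is pinned to $1$, $0$, or stays free respectively), shows the outer trace of $\hat v$ at $z$ still equals $Tv(z)$. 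Thus by~(ii) the inner and outer traces of $\hat v$ agree $\Ha$-a.e.\ on $\partial\Omega$. The jump part of $\|D\hat v\|$ supported on $\partial\Omega$ is then zero; the Cantor part vanishes there since $\Ha(\partial\Omega)<\infty$; and the absolutely continuous part vanishes because the definition~\eqref{eq:HaMeasure} forces $\mu(\partial\Omega)=0$ whenever $\Ha(\partial\Omega)<\infty$. Hence $\|D\hat v\|(\partial\Omega)=0$, and since $\|D\hat v\|$ is a finite Radon measure on $X$ with $\Omega_\eps\downarrow\overline\Omega$,
\[
\|D\hat v\|(\Omega_\eps)\to\|D\hat v\|(\overline\Omega)=\|D\hat v\|(\Omega)+\|D\hat v\|(\partial\Omega)=\|Dv\|(\Omega).
\]
Combining the $\eps$-weak minimizing property with lower semicontinuity on the open set $\Omega$ then gives
\[
\|D\chi_E\|(\Omega)\le\liminf_{k\to\infty}\|D\chi_{E_{\eps_k}}\|(\Omega)\le\liminf_{k\to\infty}\|D\hat v\|(\Omega_{\eps_k})=\|Dv\|(\Omega),
\]
so $\chi_E|_\Omega$ is a strong solution. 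The main obstacle I anticipate is the construction and trace analysis of $\hat v$: interleaving the strong BV extension of~\cite{L1} with pointwise truncation by $\chi_F$ and $\chi_G$ must be arranged so that no net boundary jump survives on $\partial\Omega$, and hypothesis~(ii) on the $\Ha$-negligibility of $\partial\Omega\cap(\partial_{\overline\Omega}A\cup\partial F\cup\partial G)$ is precisely what makes this possible.
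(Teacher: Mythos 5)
Your proposal is correct and follows essentially the same route as the paper: $\eps$-weak solution sets from Lemma~\ref{lem:EpsWeakSoln}, compactness and lower semicontinuity to pass to a limit set $E$, Lemma~\ref{lem:WeakSolnTrace} with $\eps$-independent radii $\rho_z$ for the trace condition, and for minimality the same extend-and-clip competitor ($Ev\chi_{\Omega\cup(G\setminus F)}+\chi_{F\setminus\Omega}$ in the paper, which coincides with your $\max(\chi_F,\min(\til v,\chi_G))$ outside $\Omega$) together with the key fact that its variation measure does not charge $\partial\Omega$. The only differences are presentational: you conclude directly via $\|D\hat v\|(\Omega_{\eps_k})\to\|D\hat v\|(\overline\Omega)$ where the paper argues by contradiction, and you verify $\|D\hat v\|(\partial\Omega)=0$ by matching inner and outer traces where the paper splits $\partial\Omega$ into the three pieces $\partial\Omega\cap F$, $\partial\Omega\cap(G\setminus\overline F)$, $\partial\Omega\setminus\overline G$ and uses locality of the variation measure on the middle piece.
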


\begin{proof}
For each $k\in\N$, let $\chi_{E_k}$ be a $1/k$-weak solution to the $\K_{A,B,F,G}$-obstacle problem.  The existence of such $1/k$-weak solutions is guaranteed by Lemma~\ref{lem:EpsWeakSoln}.  Then we have that 
$$P(E_k,\Omega_{1/k})=\lambda_{1/k}<\infty,$$ 
and by the definition of $1/k$-weak solutions, it follows that $\lambda_{1/(k+1)}\le\lambda_{1/k}$ for each $k\in\N.$  Therefore, it follows that
$$\sup_{k}P(E_k,\Omega)\le\sup_k P(E_k,\Omega_{1/k})<\infty,$$
and since $$\sup_k\|\chi_{E_k}\|_{L^1(\Omega)}\le\mu(\Omega)<\infty,$$
we have that $\{\chi_{E_k}\}_{k\in\N}$ is bounded in $BV(\Omega).$  Hence, by \cite[Theorem~3.7]{M}, there exists a subsequence $\{\chi_{E_{k_j}}\}_{j\in\N}$ and a measurable set $E\subset\Omega$ such that $\chi_{E_{k_j}}\to\chi_E$ in $L^1(\Omega).$  Passing to a further subsequence if necessary (keeping the same notation), we also have pointwise convergence $\mu$-a.e.\ in $\Omega.$  By lower semicontinuity of the BV energy, it follows that 
\begin{equation}\label{*}
P(E,\Omega)\le\liminf_{j\to\infty}P(E_{k_j},\Omega)\le\liminf_{j\to\infty}\lambda_{1/k_j}<\infty,
\end{equation}
 and so $E$ is a set of finite perimeter in $\Omega.$  

Since each $\chi_{E_{k}}\in\wtil\K_{A,B,F,G}(\Omega),$ it follows from the pointwise $\mu$-a.e.\ convergence that $\chi_A\le\chi_E\le\chi_B$ $\mu$-a.e.\ in $\Omega.$  For $\Ha$-a.e.\ $z\in\partial\Omega\cap F,$ there exists $\rho_z>0$ such that $\mu(B(z,\rho_z)\setminus E_{k})=0$ for all $k\in\N$ by Lemma~\ref{lem:WeakSolnTrace}.  Therefore, again by the pointwise $\mu$-a.e.\ convergence, we have that $\mu(B(x,\rho_z)\cap\Omega\setminus E)=0.$  Hence $T\chi_E(z)=1=\chi_F(z).$  Likewise, by Lemma~\ref{lem:WeakSolnTrace}, we also have that $T\chi_E(z)=0=\chi_G(z)$ for $\Ha$-a.e.\ $z\in\partial\Omega\setminus\overline G.$  For $z\in\partial\Omega\cap G\setminus F,$ we necessarily have that $\chi_F(z)=0\le T\chi_E(z)\le 1=\chi_G(z).$  Since $\Ha(\partial\Omega\cap\partial G)=0$ by hypothesis, we therefore have that $\chi_F\le T\chi_E\le\chi_G$ $\Ha$-a.e.\ on $\partial\Omega,$ and so $\chi_E\in\K_{A,B,F,G}(\Omega).$

It remains to show that $\chi_E$ is a minimizer in the sense of Definition~\ref{defn:StrongSolnDB}.  To do this, let $v\in\K_{A,B,F,G}(\Omega)$ and suppose that $P(E,\Omega)>\|Dv\|(\Omega).$  Since truncation does not increase BV-energy, we may assume that $0\le v\le 1.$   By modifying $v$ on a set of $\mu$-measure zero if necessary, hence not changing the BV-energy, we may also assume that $\chi_A\le v\le\chi_B$ everywhere in $\Omega.$  Since $\Omega$ is a uniform domain, it follows from \cite[Theorem~3.1]{L1} that there exists an extension $Ev\in BV(X)$ such that $Ev|_\Omega=v$ and $\|DEv\|(\partial\Omega)=0,$ and again by truncation we may assume that $0\le Ev\le 1.$  Now, let $\til v:X\to\R$ be given by 
$$\til v=Ev\chi_{\Omega\cup(G\setminus F)}+\chi_{F\setminus\Omega}.$$  
Thus we have that $\til v\in\wtil K_{A,B,F,G}(\Omega).$  We note that $\Ha(\partial\Omega)<\infty$ implies that $P(\Omega,X)<\infty.$  Therefore by the Leibniz rule for BV functions \eqref{eq:Leibniz}, we have  
\begin{align*}
\|D\til v\|(X)&\le\|DEv\chi_{\Omega\cup(G\setminus F)}\|(X)+P(F\setminus\Omega,X)\\
	&\le \|DEv\|(X)+P(\Omega, X)+P(F,X)+P(G,X)<\infty,
\end{align*}
and so it follows that $\til v\in BV(X).$    

We claim that $\|D\til v\|(\partial\Omega)=0.$  Indeed, we have that 
$$\|DEv\|(\partial\Omega\cap(G\setminus\overline F))=0,$$ and so for all $\eta>0$, there exists an open set $U_\eta\supset\partial\Omega\cap G\setminus\overline F$ such that $$\|D\til v\|(U_\eta\cap(G\setminus\overline F))=\|DEv\|(U_\eta\cap (G\setminus\overline F))\le\|DEv\|(U_\eta)<\eta.$$  This follows since $\til v= Ev$ on the open set $U_\eta\cap(G\setminus\overline F)$.  Thus we have that $$\|D\til v\|(\partial\Omega\cap(G\setminus\overline F))=0.$$
Furthermore, since $T\til v=1$ $\Ha$-a.e.\ on $\partial\Omega\cap F,$ it follows that $\til v^\wedge=\til v^\vee$ $\Ha$-a.e.\ on $\partial\Omega\cap F.$  Thus the jump set $S_{\til v}$ of $\til v$ satisfies $\Ha(S_{\til v}\cap\partial\Omega\cap F)=0,$ and so $\|D\til v\|(\partial\Omega\cap F)=0.$  This follows from the decomposition \eqref{eq:Decomp} of the Radon measure $\|D\til v\|(\cdot)$ and the fact that $\Ha(\partial\Omega)<\infty.$  Similarly, since $T\til v=0$ on $\partial\Omega\setminus\overline G,$ we have that $\Ha(S_{\til v}\cap\partial\Omega\setminus\overline G)=0,$ and so $\|D\til v\|(\partial\Omega\setminus\overline G)=0.$  Since $\Ha(\partial\Omega\cap(\partial G\cup\partial F))=0,$ it follows that $\|D\til v\|(\partial\Omega)=0.$   

Therefore we have that 
\begin{align*}
P(E,\Omega)>\|Dv\|(\Omega)=\|D\til v\|(\Omega)&=\|D\til v\|(\overline\Omega)\\
	&=\inf\{\|D\til v\|(U):U\text{ open, }\overline\Omega\subset U\}.
\end{align*}
Hence, there exists an open set $U\supset\overline\Omega$ and $J\in\N$ such that for all $j\in\N$ with $j>J,$ we have that 
\begin{align*}
P(E,\Omega)>\|D\til v\|(U)\ge\|D\til v\|(\Omega_{1/k_j})\ge P(E_{k_j},\Omega_{1/k_j})=\lambda_{1/k_j}.
\end{align*}
The last inequality is due to the fact that $\til v\in\wtil\K_{A,B,F,G}(\Omega)$ and $\chi_{E_{k_j}}$ is a $1/k_j$-weak solution to the $\K_{A,B,F,G}$-obstacle problem.  Since this holds for all sufficiently large $j,$ we have that 
$$P(E,\Omega)>\liminf_{j\to\infty}\lambda_{1/k_j},$$ which contradicts \eqref{*}.  Thus we have that $P(E,\Omega)\le\|Dv\|(\Omega),$ and so $u:=\chi_E$ is a strong solution to the $\K_{A,B,F,G}$-obstacle problem.  
\end{proof}

\begin{remark}\label{rem:Extension}
In the previous proposition, it is possible to obtain the same result by replacing the uniform domain assumption with the assumption that $\Omega$ is a BV extension domain where extensions do not accumulate BV-energy on the boundary.  That is, if $Eu\in BV(X)$ is the extension of $u\in BV(\Omega),$ then $\|DEu\|(\partial\Omega)=0.$  It was shown in \cite{L1} that uniform domains have this property, and so for simplicity we assume that $\Omega$ is a uniform domain. 
\end{remark}

The following lemma is an analogue of Lemma~\ref{lem:WeakSolnTrace} for strong solution sets.

\begin{lem}\label{lem:StrongSolnTrace}
Assume the hypotheses of Proposition~\ref{prop:StrongSolnSet}.  Then for $\Ha$-a.e.\ $z\in\partial\Omega\cap F,$ there exists $\rho_z>0$ such that for any strong solution set $E$ to the $\K_{A,B,F,G}$-obstacle problem, we have that 
$$\mu(B(z,\rho_z)\cap\Omega\setminus E)=0.$$
Similarly, for $\Ha$-a.e.\ $z\in\partial\Omega\setminus\overline G,$ there exists $\rho_z>0$ such that for any strong solution set $E$ to the $\K_{A,B,F,G}$-obstacle problem, we have that 
$$\mu(B(z,\rho_z)\cap\Omega\cap E)=0.$$ 
\end{lem}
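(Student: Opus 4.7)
The plan is to adapt the argument of Lemma~\ref{lem:WeakSolnTrace}, replacing the $\eps$-weak minimality in $\Omega_\eps$ with the strong minimality in $\Omega$ and compensating for the loss of direct control outside $\Omega$ by a careful analysis of the trace of the relevant competitors on $\partial\Omega$. I will describe the first assertion; the second is completely analogous with obvious sign changes.

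Fix $z\in\partial\Omega\cap F$; by hypothesis~(iii) of Proposition~\ref{prop:StrongSolnSet} we may assume $z\in B$. I will choose $r>0$ with $\overline{B(z,r)}\subset F$ and $\overline{B(z,r)}\cap\overline\Omega\subset B$, and invoke Lemma~\ref{lem:SolnSetBound} to obtain $0<r_0<r$ such that the regularized minimal weak solution set $E_0:=E_{B(z,r_0)}$ to the Dirichlet problem with data $\chi_{B(z,r_0)}$ satisfies $E_0\subset B(z,r)$; using the coarea formula together with $\Ha(\partial\Omega)<\infty$, I may additionally arrange $\Ha(\partial\Omega\cap\partial B(z,r_0))=0$. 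Positive mean curvature then produces $\rho_z>0$ with $B(z,\rho_z)\sqsubset E_0$. Given any strong solution set $E$, I will use the competitor $\tilde E:=E\cup(E_0\cap\Omega)$, whose membership in $\K_{A,B,F,G}(\Omega)$ is verified exactly as in the proof of Lemma~\ref{lem:WeakSolnTrace}. Strong minimality $P(E,\Omega)\le P(\tilde E,\Omega)$ combined with the submodularity $P(\tilde E,\Omega)+P(E\cap E_0,\Omega)\le P(E,\Omega)+P(E_0\cap\Omega,\Omega)$ yields $P(E\cap E_0,\Omega)\le P(E_0,\Omega)$. Introducing $H:=(E\cap E_0)\cup(B(z,r_0)\setminus\Omega)$, which agrees with $\chi_{B(z,r_0)}$ outside $\Omega$, minimality of $E_0$ as a weak solution delivers $P(E_0,\overline\Omega)\le P(H,\overline\Omega)$.

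The main obstacle is showing that $\|D\chi_H\|(\partial\Omega)=0$, which would force $P(H,\overline\Omega)=P(E\cap E_0,\Omega)$ and close the loop. For this I will apply Lemma~\ref{lem:WeakSolnTrace} itself to the Dirichlet problem with parameters $A'=\varnothing$, $B'=\overline\Omega$, $F'=G'=B(z,r_0)$: its first assertion pins $T\chi_{E_0}=1$ everywhere on $\partial\Omega\cap B(z,r_0)$, and its second gives $T\chi_{E_0}=0$ $\Ha$-a.e.\ on $\partial\Omega\setminus\overline{B(z,r_0)}$. Combined with $T\chi_E\ge\chi_F=1$ on $\partial\Omega\cap B(z,r_0)\subset\partial\Omega\cap F$, a short measure-density computation shows $T\chi_{E\cap E_0}=\chi_{B(z,r_0)}$ $\Ha$-a.e.\ on $\partial\Omega$, matching the exterior trace and killing the jump (the choice $\Ha(\partial\Omega\cap\partial B(z,r_0))=0$ handles the remaining boundary set). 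Chaining the resulting inequalities forces equality throughout, so $H$ is itself a weak solution to the Dirichlet problem with data $\chi_{B(z,r_0)}$; minimality of $E_0$ then gives $E_0\sqsubset H$, and $B(z,\rho_z)\sqsubset E_0$ yields $\mu(B(z,\rho_z)\cap\Omega\setminus E)=0$. For the second assertion the same scheme runs for $\Ha$-a.e.\ $z\in\partial\Omega\setminus\overline G$ with competitor $\tilde E:=E\setminus E_0$ and set $H:=E_0\setminus E$, where the trace identities combine $T\chi_{E_0}=1$ on $\partial\Omega\cap B(z,r_0)$ with $T\chi_E=0$ there (the latter from $T\chi_E\le\chi_G=0$, since $B(z,r)$ now avoids $\overline G$).
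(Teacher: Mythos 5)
Your argument is correct, and its skeleton --- reduce to $z\in F\cap B$ via hypothesis (iii), produce $E_0\subset B(z,r)$ from Lemma~\ref{lem:SolnSetBound} and $B(z,\rho_z)\sqsubset E_0$ from positive mean curvature, check that $E\cup E_0$ is admissible, and use submodularity plus strong minimality of $E$ to get $P(E\cap E_0,\Omega)\le P(E_0,\Omega)$ --- is exactly the paper's. Where you diverge is the endgame. The paper invokes \cite[Proposition~4.8]{LMSS} to assert that $E_0$ is also the \emph{minimal strong} solution set, so that $E\cap E_0$ (having trace $\chi_{B(z,r_0)}$ and perimeter at most $P(E_0,\Omega)$) is a strong solution and minimality gives $E_0\cap\Omega\sqsubset E$ directly. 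You instead stay at the level of \emph{weak} solutions: you recover $T\chi_{E_0}=\chi_{B(z,r_0)}$ by specializing Lemma~\ref{lem:WeakSolnTrace} to the degenerate data $A'=\varnothing$, $B'=\overline\Omega$, $F'=G'=B(z,r_0)$, glue $E\cap E_0$ to $B(z,r_0)\setminus\Omega$, kill the jump on $\partial\Omega$ using the matching traces together with $\Ha(\partial\Omega\cap\partial B(z,r_0))=0$ and the decomposition \eqref{eq:Decomp}, and then apply minimality of $E_0$ among weak solutions. This is a valid substitute: it is more self-contained (it does not need the weak-to-strong equivalence of \cite{LMSS} beyond what the paper's own lemmas provide) at the cost of reproducing the ``no perimeter on $\partial\Omega$'' bookkeeping already carried out for $\til v$ in the proof of Proposition~\ref{prop:StrongSolnSet}. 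One small point to make explicit in the second assertion: besides $\overline{B(z,r)}\cap\overline G=\varnothing$, you must also choose $B(z,r)\cap A=\varnothing$ so that $E\setminus E_0$ still dominates $\chi_A$ in $\Omega$; this is available for $\Ha$-a.e.\ $z\in\partial\Omega\setminus\overline G$ by hypotheses (ii) and (iii), and is what the ``same scheme'' of Lemma~\ref{lem:WeakSolnTrace} does, but it should be stated.
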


\begin{proof}
For $z\in\partial\Omega\cap F\cap B,$ there exists $r>0$ such that $B(z,r)\subset F$ and $B(z,r)\cap\overline\Omega\subset B.$  By Lemma~\ref{lem:SolnSetBound}, there exists $r_0>0$ such that $E_0\subset B(z,r),$ where $E_0$ is the regularized minimal weak solution to the Dirichlet problem with boundary data $\chi_{B(z,r_0)}.$  By positive mean curvature, there exists $\rho_z>0$ such that $B(z,\rho_z)\sqsubset E_0.$

By \cite[Proposition~4.8]{LMSS}, $E_0$ is also the minimal strong solution set, and so we have that $T\chi_{E_0}=\chi_{B(z,r_0)}$ $\Ha$-a.e.\ on $\partial\Omega.$  As $\chi_E\in\K_{A,B,F,G}(\Omega)$, we have that $T\chi_E(y)=1=T\chi_{E_0}(y)$ for $\Ha$-a.e.\ $y\in \partial\Omega\cap B(z,r_0)$.  Since $T\chi_{E_0}(y)=0$ for $\Ha$-a.e.\ $y\in\partial\Omega\setminus B(z,r_0),$ we see that $T\chi_{E\cap E_0}=\chi_{B(z,r_0)}$ $\Ha$-a.e.\ on $\partial\Omega.$  Likewise by our choice of $r_0,$ we have that $\chi_F\le T\chi_{E\cup E_0}\le \chi_G$ $\Ha$-a.e.\ on $\partial\Omega,$ and $\chi_{E\cup E_0}\in\K_{A,B,F,G}(\Omega).$  

By \cite[Proposition~4.7]{M}, we have that 
$$P(E\cap E_0,\Omega)+P(E\cup E_0,\Omega)\le P(E_0,\Omega)+P(E,\Omega).$$ 
If $P(E\cap E_0,\Omega)>P(E_0,\Omega),$ then $P(E\cup E_0,\Omega)<P(E,\Omega),$ which is a contradiction since $\chi_{E\cup E_0}\in\K_{A,B,F,G}(\Omega)$ and $E$ is a strong solution set to the $\K_{A,B,F,G}$-obstacle problem.  Hence $P(E\cap E_0,\Omega)\le P(E_0,\Omega),$ and so $E\cap E_0$ is a strong solution to the Dirichlet problem with boundary data $\chi_{B(z,r_0)}.$  By minimality of the strong solution set $E_0,$ we have that $E_0\cap\Omega\sqsubset E,$ and since $B(z,\rho_z)\sqsubset E_0,$ it follows that 
$$\mu(B(z,\rho_z)\cap\Omega\setminus E)=0.$$

For $\Ha$-a.e.\ $z\in\partial\Omega\setminus\overline G,$ we proceed as we did in Lemma~\ref{lem:WeakSolnTrace}, using positive mean curvature and Lemma~\ref{lem:SolnSetBound} to choose $r_0>0$ and $\rho_z>0$ such that 
$$B(z,\rho_z)\sqsubset E_0\subset B(z,r)\subset X\setminus(\overline G\cup A).$$  
Again, $E_0$ is the regularized minimal weak (and thus strong) solution to the Dirichlet problem with boundary data $\chi_{B(z,r_0)}.$ 
By our choice of $r_0,$ we see that $T\chi_{E_0\setminus E}=\chi_{B(z,r_0)}$ $\Ha$-a.e., and likewise that $\chi_F\le T\chi_{E\setminus E_0}\le\chi_G$ $\Ha$-a.e.\ on $\partial\Omega.$  We also have that $\chi_{E\setminus E_0}\in\K_{A,B,F,G}(\Omega).$  As in Lemma~\ref{lem:WeakSolnTrace}, we have that 
$$P(E_0\setminus E,\Omega)+P(E\setminus E_0,\Omega)\le P(E_0,\Omega)+P(E,\Omega),$$ and by the same argument as above, minimality of the strong solution $E_0$ implies that $E_0\sqsubset E_0\setminus E.$  Since $B(z,\rho_z)\sqsubset E_0,$ it follows that 
\[
\mu(B(z,\rho_z)\cap\Omega\cap E)=0.\qedhere
\]    
\end{proof}

By a similar argument as in the previous lemma, we can show that both the intersection and union of strong solution sets are strong solution sets.

\begin{lem}\label{lem:GenIntUnion}  Let $\Omega$ be a bounded domain with boundary of positive mean curvature.
For $i\in\{1,2\},$ let $A_i,B_i\subset\overline\Omega$ be relatively open, let $F_i\subset G_i\subset X$ be open, and suppose that 
\begin{enumerate}
\item[(i)] $P(F_i,X),P(G_i,X)<\infty,$
\item[(ii)] $\Ha(\partial\Omega\cap(\partial_{\overline\Omega} A_i\cup\partial F_i\cup\partial G_i))=0,$
\item[(iii)] $\chi_{F_i}\le\chi_{B_i}$ and $\chi_{A_i}\le\chi_{G_i}$ $\Ha$-a.e.\ on $\partial\Omega.$
\end{enumerate} 
In addition, suppose that $A_1\sqsubset A_2,$ $B_1\sqsubset B_2,$ $F_1\sqsubset F_2$, and $G_1\sqsubset G_2.$  If $E_1$ and $E_2$ are strong solution sets to the $\K_1:=\K_{A_1,B_1,F_1,G_1}$ and $\K_2:=\K_{A_2,B_2,F_2,G_2}$-obstacle problems respectively, then $E_1\cap E_2$ and $E_1\cup E_2$ are strong solution sets to the $\K_1$ and $\K_2$-obstacle problems respectively.  
\end{lem}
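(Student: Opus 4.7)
The plan is to follow the same submodularity argument that worked for weak solutions in Lemma~\ref{lem:WeakSolnTrace} and for strong solutions in Lemma~\ref{lem:StrongSolnTrace}, namely to use the inequality
\[
P(E_1\cap E_2,\Omega)+P(E_1\cup E_2,\Omega)\le P(E_1,\Omega)+P(E_2,\Omega)
\]
from \cite[Proposition~4.7]{M}, combined with minimality. The nontrivial preliminary is to check that $\chi_{E_1\cap E_2}\in\K_1(\Omega)$ and $\chi_{E_1\cup E_2}\in\K_2(\Omega)$, since the classes $\K_1,\K_2$ are distinct, but this is precisely the reason for the nesting hypotheses $A_1\sqsubset A_2,\,B_1\sqsubset B_2,\,F_1\sqsubset F_2,\,G_1\sqsubset G_2$.

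For the interior obstacle conditions, from $\chi_{E_i}\in\K_i(\Omega)$ we have $A_i\sqsubset E_i\sqsubset B_i$ $\mu$-a.e. Then $A_1\sqsubset A_2\sqsubset E_2$ together with $A_1\sqsubset E_1$ gives $A_1\sqsubset E_1\cap E_2$, while $E_1\cap E_2\sqsubset E_1\sqsubset B_1$ handles the upper obstacle; the analogous argument with the roles reversed shows $A_2\sqsubset E_1\cup E_2\sqsubset B_2$. For the trace conditions, I would observe that at every point $z\in\partial\Omega$ where $T\chi_{E_1}(z)$ and $T\chi_{E_2}(z)$ exist, the densities of $E_1,E_2$ in $\Omega$ at $z$ lie in $\{0,1\}$, so the trace of the intersection equals the product of traces and the trace of the union equals their maximum. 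Using this and the nesting $F_1\sqsubset F_2$, $G_1\sqsubset G_2$, the boundary inequalities propagate: on $\partial\Omega\cap F_1$, both traces equal $1$ $\Ha$-a.e., giving $T\chi_{E_1\cap E_2}=1$ there, while $T\chi_{E_1\cap E_2}\le T\chi_{E_1}\le\chi_{G_1}$ handles the upper bound; similarly $T\chi_{E_1\cup E_2}\ge T\chi_{E_2}\ge\chi_{F_2}$ and $T\chi_{E_1\cup E_2}\le\max(\chi_{G_1},\chi_{G_2})=\chi_{G_2}$ $\Ha$-a.e. Hence $\chi_{E_1\cap E_2}\in\K_1(\Omega)$ and $\chi_{E_1\cup E_2}\in\K_2(\Omega)$.

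With admissibility in hand, I would close the argument as follows. Since $E_1$ is a strong solution to the $\K_1$-problem and $\chi_{E_1\cap E_2}\in\K_1(\Omega)$, we have $P(E_1,\Omega)\le P(E_1\cap E_2,\Omega)$. Subtracting from the submodularity inequality yields $P(E_1\cup E_2,\Omega)\le P(E_2,\Omega)$. Together with $\chi_{E_1\cup E_2}\in\K_2(\Omega)$ and the fact that $E_2$ is a strong solution to the $\K_2$-problem, this forces equality $P(E_1\cup E_2,\Omega)=P(E_2,\Omega)$, so $E_1\cup E_2$ is itself a strong solution to $\K_2$. By symmetry, the comparison $P(E_2,\Omega)\le P(E_1\cup E_2,\Omega)$ substituted into the submodularity inequality gives $P(E_1\cap E_2,\Omega)\le P(E_1,\Omega)$, and again equality must hold, so $E_1\cap E_2$ is a strong solution to $\K_1$.

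The main conceptual point — and the place where one must be careful — is the trace computation for unions and intersections, particularly the verification that the exceptional $\Ha$-null sets where the traces of $\chi_{E_1},\chi_{E_2}$ fail to exist combine harmlessly. The nesting hypotheses and the fact that the boundary measure $\Ha|_{\partial\Omega}$ ignores the $\Ha$-null exceptional sets for each $E_i$ make this routine once stated precisely; everything else is a direct application of \cite[Proposition~4.7]{M} together with the definition of a strong solution.
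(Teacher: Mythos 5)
Your proof is correct, and its overall architecture --- the submodularity inequality from \cite[Proposition~4.7]{M} followed by the minimality swap --- is the same as the paper's. The genuine divergence is in the trace step. The paper verifies $\chi_{F_1}\le T\chi_{E_1\cap E_2}\le\chi_{G_1}$ (and the analogue for the union) by invoking Lemma~\ref{lem:StrongSolnTrace}: for $\Ha$-a.e.\ $z\in\partial\Omega\cap F_i$ it produces a ball with $B(z,\rho_{z,i})\cap\Omega\sqsubset E_i$, takes the smaller radius to get $B(z,\rho_z)\cap\Omega\sqsubset E_1\cap E_2$, and argues symmetrically on $\partial\Omega\setminus\overline{G_1}$. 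You instead compute the traces of $E_1\cap E_2$ and $E_1\cup E_2$ directly from those of $E_1$ and $E_2$, using that the trace of a characteristic function, where it exists, lies in $\{0,1\}$ and records the density of the set in $\Omega$ at that boundary point; the product/max formulas then follow from elementary inclusion--exclusion estimates on $\mu(B(z,r)\cap\Omega\cap\,\cdot\,)$. This is more elementary and avoids re-invoking the positive-mean-curvature barrier machinery (which enters your argument only through the hypothesis that $E_1,E_2$ are already admissible). Two points you should make explicit, both of which are the ``null-set bookkeeping'' you flagged and both of which do close: (1) membership of $\chi_{E_i}$ in $\K_i(\Omega)$ guarantees that $T\chi_{E_i}$ exists $\Ha$-a.e.\ on $\partial\Omega$, which is what licenses the pointwise product/max computation off a single $\Ha$-null set; (2) the passage from the $\mu$-a.e.\ containment $F_1\sqsubset F_2$ to ``$T\chi_{E_2}=1$ $\Ha$-a.e.\ on $\partial\Omega\cap F_1$'' requires noting that $F_1$ open and $\mu(F_1\setminus F_2)=0$ force $F_1\subset\overline{F_2}$, so $F_1\setminus F_2\subset\partial F_2$, which meets $\partial\Omega$ in an $\Ha$-null set by hypothesis (ii); the same observation is needed to justify $\max(\chi_{G_1},\chi_{G_2})=\chi_{G_2}$ $\Ha$-a.e.\ on $\partial\Omega.$
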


\begin{proof}
From \cite[Proposition~4.7]{M}, we have that 
\begin{equation}\label{eq:GenIntUnion}
P(E_1\cap E_2,\Omega)+P(E_1\cup E_2,\Omega)\le P(E_1,\Omega)+P(E_2,\Omega)<\infty,
\end{equation}
and so $\chi_{E_1\cap E_2},\chi_{E_1\cup E_2}\in BV(\Omega).$  Since $\chi_{A_1}\le\chi_{E_1}$ and $\chi_{A_1}\le\chi_{A_2}\le\chi_{E_2}$ $\mu$-a.e.\ in $\Omega,$ we have that $\chi_{A_1}\le\chi_{E_1\cap E_2}$ $\mu$-a.e.\ in $\Omega.$  Likewise we have that  $\chi_{E_1\cap E_2}\le\chi_{E_1}\le\chi_{B_1}$ $\mu$-a.e.\ in $\Omega,$ and so  
$$\chi_{A_1}\le\chi_{E_1\cap E_2}\le\chi_{B_1}$$
$\mu$-a.e.\ in $\Omega.$  Similarly, $\chi_{B_2}\le\chi_{E_2}\le\chi_{E_1\cup E_2}$ $\mu$-a.e.\ in $\Omega,$ and since $\chi_{E_1}\le\chi_{B_1}\le\chi_{B_2}$ $\mu$-a.e.\ in $\Omega$, it follows that 
$$\chi_{A_2}\le\chi_{E_1\cup E_2}\le\chi_{B_2}$$ 
$\mu$-a.e.\ in $\Omega.$  For $\Ha$-a.e.\ $z\in\partial\Omega\cap F_1,$ it follows from Lemma~\ref{lem:StrongSolnTrace} that there exists $\rho_{z,1}>0$ such that $B(z,\rho_{z,1})\cap\Omega\sqsubset E_1.$  Likewise, for $\Ha$-a.e.\ $z\in\partial\Omega\cap F_2,$ there exists $\rho_{z,2}>0$ such that $B(z,\rho_{z,2})\cap\Omega\sqsubset E_2.$  Since $F_1\sqsubset F_2,$ for $\Ha$-a.e.\ $z\in\partial\Omega\cap F_1$ there exists $\rho_z=\min\{\rho_{z,1},\rho_{z,2}\}>0$ such that $$B(z,\rho_z)\cap\Omega\sqsubset E_1\cap E_2.$$    
Thus $T_{E_1\cap E_2}(z)=1=\chi_{F_1}(z)$ for $\Ha$-a.e.\ $z\in\partial\Omega\cap F_1.$  Similarly, it follows from Lemma~\ref{lem:StrongSolnTrace} that for $\Ha$-a.e.\ $z\in\partial\Omega\setminus\overline G_1,$ there exists $\rho_{z}>0$ such that 
$$\mu(B(z,\rho_z)\cap\Omega\cap E_1\cap E_2)=\mu(B(z,\rho_z)\cap\Omega\cap E_1)=0.$$  Thus $T\chi_{E_1\cap E_2}(z)=0=\chi_{G_1}(z)$ for $\Ha$-a.e.\ $z\in\partial\Omega\cap\overline G_1.$  Since $\Ha(\partial\Omega\cap\partial G_1)=0$, it follows that 
$$\chi_{F_1}\le T\chi_{E_1\cap E_2}\le \chi_{G_1}$$ 
$\Ha$-a.e.\ on $\partial\Omega,$ and so $\chi_{E_1\cap E_2}\in\K_1.$  Using Lemma~\ref{lem:StrongSolnTrace} in a similar fashion, we have that 
$$\chi_{F_2}\le T\chi_{E_1\cup E_2}\le\chi_{G_2}$$ 
$\Ha$-a.e.\ on $\partial\Omega.$ Thus it follows that $\chi_{E_1\cup E_2}\in\K_2.$

Now if $P(E_1\cap E_2,\Omega)>P(E_1,\Omega),$ then $P(E_1\cup E_2,\Omega)<P(E_2,\Omega)$ by \eqref{eq:GenIntUnion} .  However this is a contradiction since $\chi_{E_1\cup E_2}\in\K_2$ and $E_2$ is a strong solution set to the $\K_2$-obstacle problem.  Therefore $P(E_1\cap E_2,\Omega)\le P(E_1,\Omega),$ and since $\chi_{E_1\cap E_2}\in\K_1,$ we have that $E_1\cap E_2$ is a strong solution set to the $\K_1$-obstacle problem.  By a similar argument, $E_1\cup E_2$ is a strong solution set to the $\K_2$-obstacle problem.      
\end{proof}

We conclude this section by showing that minimal strong solution sets exist.  This is an adaptation of \cite[Proposition~3.7]{LMSS} which gave a similar result for weak solutions to the Dirichlet problem with boundary data $\chi_F$ for $F\subset X$ open.

\begin{lem}\label{lem:MinStrongSolnSet}
Assume the hypotheses of Proposition~\ref{prop:StrongSolnSet}.  Then a unique minimal strong solution set $E$ exists to the $\K_{A,B,F,G}$-obstacle problem.
\end{lem}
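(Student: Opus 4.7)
The plan is to construct the minimal strong solution set as the $L^1$-limit of a carefully chosen decreasing sequence of strong solution sets, following the strategy of \cite[Proposition~3.7]{LMSS} but working with strong rather than weak solutions. First I would set
\[
\alpha := \inf\{\mu(E') : E' \text{ is a strong solution set to the } \K_{A,B,F,G}\text{-obstacle problem}\},
\]
which is finite since Proposition~\ref{prop:StrongSolnSet} guarantees that the class of such sets is nonempty and $\alpha \le \mu(\Omega) < \infty$. Choose a minimizing sequence $\{E_k\}_{k\in\N}$ with $\mu(E_k) \to \alpha$, and replace it by the nested sequence $F_k := E_1 \cap E_2 \cap \cdots \cap E_k$. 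Applying Lemma~\ref{lem:GenIntUnion} inductively with $\K_1 = \K_2 = \K_{A,B,F,G}$, each $F_k$ is itself a strong solution set. Since $\{F_k\}$ is $\mu$-a.e.\ decreasing and $\alpha \le \mu(F_k) \le \mu(E_k)$, we have $\mu(F_k) \to \alpha$, and the pointwise (hence $L^1(\Omega)$, by dominated convergence) limit $E := \bigcap_k F_k$ satisfies $\mu(E) = \alpha$.

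Next I would verify that $\chi_E \in \K_{A,B,F,G}(\Omega)$ and that $E$ is a strong solution set. The obstacle conditions $A \sqsubset E \sqsubset B$ follow directly from the corresponding conditions satisfied by each $F_k$. For the boundary trace, the key point is that the radii $\rho_z$ provided by Lemma~\ref{lem:StrongSolnTrace} depend only on $z$ (and on $A,B,F,G$), \emph{not} on the specific strong solution set, so that $B(z,\rho_z)\cap\Omega \sqsubset F_k$ for every $k$ at $\Ha$-a.e.\ $z \in \partial\Omega\cap F$, and this property survives intersection: $B(z,\rho_z)\cap\Omega \sqsubset E$, giving $T\chi_E(z)=1=\chi_F(z)$. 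A symmetric argument at $\Ha$-a.e.\ $z \in \partial\Omega\setminus\overline G$ together with $\Ha(\partial\Omega\cap\partial G)=0$ produces $\chi_F \le T\chi_E \le \chi_G$ $\Ha$-a.e.\ on $\partial\Omega$. Lower semicontinuity of the BV-energy, combined with the fact that each $F_k$ attains the minimal BV-energy $\lambda$ among elements of $\K_{A,B,F,G}(\Omega)$, forces $\|D\chi_E\|(\Omega) \le \lambda$, and the reverse inequality is automatic since $\chi_E \in \K_{A,B,F,G}(\Omega)$; hence $E$ is a strong solution set.

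Finally, minimality and uniqueness follow from a single intersection argument. Given any strong solution set $E'$, Lemma~\ref{lem:GenIntUnion} gives that $E \cap E'$ is again a strong solution set, so $\alpha \le \mu(E \cap E') \le \mu(E) = \alpha$, forcing $\mu(E \setminus E')=0$, i.e., $E \sqsubset E'$. Uniqueness ($\mu$-a.e.) of the minimal strong solution set is then immediate, since any two minimal strong solution sets must satisfy $E_1 \sqsubset E_2 \sqsubset E_1$.

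The main obstacle to this plan is the passage to the limit at the boundary: traces of $BV$ functions do not in general behave well under $L^1$-convergence, so one cannot simply invoke lower semicontinuity to preserve the trace condition. What rescues the argument is the \emph{uniformity} in Lemma~\ref{lem:StrongSolnTrace} — the radii $\rho_z$ are independent of the chosen solution — which is a consequence of the positive mean curvature condition combined with Lemma~\ref{lem:SolnSetBound} (hence of the assumption that singletons on $\partial\Omega$ are $\Ha$-negligible). This uniformity is exactly what allows us to intersect infinitely many solution sets without losing the trace condition.
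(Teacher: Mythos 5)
Your proposal is correct and follows essentially the same route as the paper's proof: a nested intersection of a measure-minimizing sequence of strong solution sets (each an element of the solution class by Lemma~\ref{lem:GenIntUnion}), preservation of the trace condition via the solution-independent radii of Lemma~\ref{lem:StrongSolnTrace}, lower semicontinuity to confirm the limit is a strong solution set, and a final intersection argument for minimality and uniqueness. Your closing observation about why the uniformity of $\rho_z$ is the crucial point is a correct and well-placed reading of what makes the limit passage work.
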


\begin{proof}
Let $$\Ee:=\{E\subset\Omega:\chi_E\text{ is a strong solution to the $\K_{A,B,F,G}$-obstacle problem}\},$$ and let $\alpha:=\inf_{E\in\Ee}\mu(E).$  Since $\Ee\ne\varnothing$ by Proposition~\ref{prop:StrongSolnSet}, we have that $\alpha<\infty.$
Let $\{E_k\}_{k\in\N}\subset\Ee$ be such that $\mu(E_k)\to\alpha$ as $k\to\infty.$  Let $\wtil E_1=E_1$ and for $k\in\N,$ let 
$$\wtil E_{k+1}=\wtil E_k\cap E_{k+1}.$$  
By Lemma~\ref{lem:GenIntUnion}, we have that $\wtil E_k\in\Ee$ for all $k\in\N.$  

Letting $E=\bigcap_{k\in\N} \wtil E_k,$ it then follows that 
$$\mu(\wtil E_k)\to\alpha=\mu(E)$$ as $k\to\infty.$  We note that $\chi_A\le\chi_E\le\chi_B$ $\mu$-a.e.\ in $\Omega$ since the $\wtil E_k$ are strong solution sets, and likewise by Lemma~\ref{lem:StrongSolnTrace} it follows that $\chi_F\le T\chi_E\le\chi_G$ $\Ha$-a.e.\ on $\partial\Omega.$  

Since $\chi_{\wtil E_k}\to\chi_E$ in $L^1(\Omega),$ it follows from lower semicontinuity of the BV energy that 
$$P(E,\Omega)\le\liminf_{k\to\infty}P(\wtil E_k,\Omega)<\infty.$$
Hence $\chi_E\in BV(\Omega)$ and so $\chi_E\in\K_{A,B,F,G}(\Omega).$  Since the $\wtil E_k$ are strong solution sets, it follows that $E$ is a strong solution set.

Let $E'$ be another strong solution set to the $\K_{A,B,F,G}$-obstacle problem.  By Lemma~\ref{lem:GenIntUnion}, $E\cap E'$ is a strong solution set, and so it follows that 
$$\alpha\le\mu(E\cap E')\le\mu(E)=\alpha.$$  
Hence $\mu(E\setminus E')=0,$  and so $E$ is the minimal strong solution set to the $\K_{A,B,F,G}$-obstacle problem.  Uniqueness $\mu$-a.e.\ follows from minimality.
\end{proof}

\section{Continuous boundary data and continuous obstacles}

In this section, we assume that $\Omega$ is a bounded uniform domain such that $\mu(X\setminus\Omega)>0$, with boundary of positive mean curvature, such that $\Ha(\partial\Omega)<\infty$, $\Ha|_{\partial\Omega}$ is doubling, and $\Ha|_{\partial\Omega}$ is lower codimension 1 Ahlfors regular as in \eqref{eq:LowerAR}.  Furthermore, we assume that $\Ha(\{z\})=0$ for all $z\in\partial\Omega.$  

Our goal is to use the minimal strong solution sets given by Lemma~\ref{lem:MinStrongSolnSet} to construct strong solutions, in the manner of \cite{SWZ}, for continuous $\psi_1,$ $\psi_2,$ $f,$ and $g.$  We begin with the following simple lemma.

\begin{lem}\label{lem:NonEmpty}
Let $v\in\K_{\psi_1,\psi_2,f,g}(\Omega).$  Then for $\Leb$-a.e.\ $t\in\R,$ we have that $$\chi_{\{v>t\}}\in\K_{A_t,B_t,F_t,G_t}(\Omega),$$
where $A_t:=\{\psi_1>t\},$ $B_t:=\{\psi_2>t\},$ $F_t:=\{f>t\},$ and $G_t:=\{g>t\}.$
\end{lem}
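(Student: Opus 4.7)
The statement has three components to verify for $\Leb$-a.e.\ $t$: that $\chi_{\{v>t\}} \in BV(\Omega)$, that the obstacle inequalities $\chi_{A_t} \le \chi_{\{v>t\}} \le \chi_{B_t}$ hold $\mu$-a.e.\ in $\Omega$, and that the trace inequalities $\chi_{F_t} \le T\chi_{\{v>t\}} \le \chi_{G_t}$ hold $\Ha$-a.e.\ on $\partial\Omega$. My plan is to handle these three in order.

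For the first item, I would invoke the coarea formula from \cite[Proposition~4.2]{M}, namely $\|Dv\|(\Omega) = \int_\R P(\{v>t\},\Omega)\,dt$. Since $v \in BV(\Omega)$ by hypothesis, the integrand is finite for $\Leb$-a.e.\ $t$, and together with $\chi_{\{v>t\}} \in L^1(\Omega)$ (from $\mu(\Omega) < \infty$) this gives $\chi_{\{v>t\}} \in BV(\Omega)$.

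For the obstacle inequality, I would argue pointwise (for every $t$): since $\psi_1 \le v \le \psi_2$ $\mu$-a.e.\ in $\Omega$, whenever $\psi_1(x) > t$ we also have $v(x) \ge \psi_1(x) > t$, and whenever $v(x) > t$ we have $\psi_2(x) \ge v(x) > t$. This immediately gives the set inclusions $A_t \subset \{v>t\} \subset B_t$ modulo $\mu$-null sets.

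The key step is the trace inequality, and here I would reduce it to Lemma~\ref{lem:SupLevelTrace} by using the trace $Tv$ itself as the ``boundary datum''. Since $f,g \in L^1(\partial\Omega)$ and $f \le Tv \le g$ $\Ha$-a.e., we have $Tv \in L^1(\partial\Omega)$, so Lemma~\ref{lem:SupLevelTrace} applied to $u = v$ with boundary data $Tv$ yields, for $\Leb$-a.e.\ $t$,
\[
T\chi_{\{v>t\}} = \chi_{\{Tv > t\}} \quad \Ha\text{-a.e.\ on }\partial\Omega.
\]
From $f \le Tv \le g$ $\Ha$-a.e.\ it follows directly that $\{f > t\} \subset \{Tv > t\} \subset \{g > t\}$ modulo $\Ha$-null sets (if $f(z) > t$ then $Tv(z) \ge f(z) > t$, and if $Tv(z) > t$ then $g(z) \ge Tv(z) > t$), which gives $\chi_{F_t} \le \chi_{\{Tv > t\}} \le \chi_{G_t}$ $\Ha$-a.e. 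Chaining these two identities yields the desired trace inequality.

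There is no real obstacle here; the only conceptual point is recognizing that one need not prove a new ``inequality version'' of the superlevel-set trace lemma, since applying Lemma~\ref{lem:SupLevelTrace} with $f$ replaced by $Tv$ and then comparing superlevel sets of $Tv$ against those of $f$ and $g$ achieves the same end. The intersection over the three relevant full-measure $t$-sets (from coarea and from the two trace identities) is still of full $\Leb$-measure.
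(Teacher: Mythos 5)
Your proposal is correct and follows essentially the same route as the paper: coarea formula for the BV membership, a pointwise comparison of superlevel sets for the obstacle inequalities, and Lemma~\ref{lem:SupLevelTrace} applied with $Tv$ as the boundary datum combined with $f\le Tv\le g$ $\Ha$-a.e.\ for the trace inequalities. No gaps.
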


\begin{proof}
Since $v\in BV(\Omega),$ it follows from the coarea formula that $\chi_{\{v>t\}}\in BV(\Omega)$ for $\Leb$-a.e.\ $t\in\R.$  If $z\in\Omega$ such that $z\in A_t\setminus\{v>t\}$ for $t\in\R,$ then $v(z)<\psi_1(z).$  Since $v\in\K_{\psi_1,\psi_2,f,g}(\Omega),$ this can happen for at most a $\mu$-negligible set of points in $\Omega.$  Thus $\chi_{A_t}\le\chi_{\{v>t\}}$ $\mu$-a.e.\ in $\Omega.$  Similarly, we have that $\chi_{\{v>t\}}\le\chi_{B_t}$ $\mu$-a.e.\ in $\Omega.$ 

By Lemma~\ref{lem:SupLevelTrace}, we have that $T\chi_{\{v>t\}}=\chi_{\{Tv>t\}}$ $\Ha$-a.e.\ on $\partial\Omega$ for $\Leb$-a.e.\ $t\in\R.$ Since $f\le Tv\le g$ $\Ha$-a.e.\ on $\partial\Omega,$ it follows that $\chi_{F_t}\le T\chi_{\{v>t\}}\le\chi_{G_t}$
$\Ha$-a.e.\ on $\partial\Omega$ for such $t\in\R.$   
\end{proof}

We now prove Theorem~\ref{thm:ExistStrongSoln} by adapting the proof of \cite[Theorem~4.10]{LMSS}.

\begin{proof}[Proof of Theorem~\ref{thm:ExistStrongSoln}.]
Since $\K_{\psi_1,\psi_2,f,g}(\Omega)\ne\varnothing$ and $\Ha$ satisfies \eqref{eq:LowerAR}, it follows that $f\le g$ on $\partial\Omega$ by continuity of $f$ and $g$.  Let $\Ext f,\Ext g\in C(X)\cap BV(X)$ be the extensions of $f$ and $g$ to $X$ constructed in \cite[Section~5]{LMSS}.  Replacing $\Ext f$ with $\min\{\Ext f,\Ext g\}$ if necessary, we may assume that $\Ext f\le \Ext g$ in $X$. For each $t\in\R$, let 
$$A_t:=\{\psi_1>t\},\, B_t:=\{\psi_2>t\},\, F_t:=\{\Ext f>t\},\, G_t:=\{\Ext g>t\}.$$ 

Since there exists $w\in\K_{\psi_1,\psi_2,f,g}(\Omega),$ it follows that $\psi_1\le g$ and $f\le\psi_2$ $\Ha$-a.e.\ on $\partial\Omega.$  Indeed if $z\in\partial\Omega$ is such that $g(z)<\psi_1(z),$ then by continuity of $\psi_1,$ there exists $r>0$ such that 
$$\psi_1>g(z)+(\psi_1(z)-g(z))/2$$
 on $B(z,r)\cap\Omega.$  Since $w\ge\psi_1$ $\mu$-a.e.\ in $\Omega,$ it follows that $Tw(z)>g(z).$  Since $w\in\K_{\psi_1,\psi_2,f,g}(\Omega),$ the set of $z\in\partial\Omega$ such that $Tw(z)>g(z)$ is $\Ha$-negligible.  Hence, $\psi_1\le g$ $\Ha$-a.e.\ on $\partial\Omega.$  A similar argument shows that $f\le\psi_2$ $\Ha$-a.e.\ on $\partial\Omega.$  Furthermore, non-emptiness of $\K_{\psi_1,\psi_2,f,g}(\Omega)$ and continuity of $\psi_1$ and $\psi_2$ implies that $\psi_1\le\psi_2$ in $\Omega$.

From these facts and continuity of the obstacle and boundary functions, it follows that for each $t\in\R,$ $A_t\subset B_t\subset\overline\Omega$ are relatively open, and $F_t\subset G_t\subset X$ are open such that $\chi_{A_t}\le\chi_{G_t}$ and $\chi_{F_t}\le\chi_{B_t}$ $\Ha$-a.e.\ on $\partial\Omega.$  Since $\Ext f, \Ext g\in BV(X),$ it follows from the coarea formula that $P(F_t,X),P(G_t,X)<\infty$ for $\Leb$-a.e.\ $t\in\R.$  Moreover from continuity of $\psi_1,f$ and $g$, and the fact that $\Ha(\partial\Omega)<\infty,$ it follows that $$\Ha(\partial\Omega\cap(\partial_{\overline\Omega} A_t\cup\partial F_t\cup\partial G_t))=0$$ for $\Leb$-a.e.\ $t\in\R.$  

Since there exists $w\in\K_{\psi_1,\psi_2,f,g}(\Omega),$ it follows that $w_t:=\chi_{\{w>t\}}\in\K_{A_t,B_t,F_t,G_t}(\Omega)$ for $\Leb$-a.e.\ $t\in\R$ by Lemma~\ref{lem:NonEmpty}.  Modifying $w_t$ on a set of measure zero if necessary, we may assume that $\chi_{A_t}\le w_t\le\chi_{B_t}$ everywhere in $\Omega.$  Since $w_t\in BV(\Omega),$ there exists by \cite[Theorem~3.1]{L1} an extension $Ew_t\in BV(X)$ such that $Ew_t|_\Omega=w_t$, $\|DEw_t\|(\partial\Omega)=0,$ and $0\le Ew_t\le 1.$ Let $\til w_t:X\to\R$ be given by
$$\til w_t= Ew_t\chi_{\Omega\cup(G_t\setminus F_t)}+\chi_{F_t\setminus\Omega}.$$  By using the same argument as in the proof of Proposition~\ref{prop:StrongSolnSet}, it follows that $\til w_t\in~\wtil\K_{A_t,B_t,F_t,G_t}(\Omega)\cap BV_\loc(X).$ Hence, we have shown that $A_t,B_t,F_t$ and $G_t$ satisfy the hypotheses of Proposition~\ref{prop:StrongSolnSet} for $\Leb$-a.e.\ $t\in\R.$  

Let $J\subset\R$ be the set of all $t\in\R$ satisfying these conditions.  Then for each $t\in J,$ there exists a minimal strong solution set $E_t$ to the $\K_{A_t,B_t,F_t,G_t}$-obstacle problem by Proposition~\ref{prop:StrongSolnSet} and Lemma~\ref{lem:MinStrongSolnSet}.   

For $s,t\in J$ with $s<t,$ it follows from Lemma~\ref{lem:GenIntUnion} that $E_s\cap E_t$ is a strong solution set to the $\K_{A_t,B_t,F_t,G_t}$-obstacle problem.  However since $E_t$ is the minimal strong solution set, it follows that $E_t\sqsubset E_s\cap E_t.$  Therefore, for $s,t\in J$ such that $s<t,$ we have 
\begin{equation}\label{eq:Nest}
E_t\sqsubset E_s.
\end{equation}

 Since $\Leb(\R\setminus J)=0,$ there exists a countable set $I\subset J$ which is dense in $\R.$  Let $u:\Omega\to\R$ be given by 
$$u(x)=\sup\{t\in I:x\in E_t\}.$$    
We will show that $u$ is a strong solution to the $\K_{\psi_1,\psi_2,f,g}$-obstacle problem.

First, let $x\in\Omega$ be such that $\chi_{A_t}(x)\le\chi_{E_t}(x)\le\chi_{B_t}(x)$ for all $t\in I.$  Since the $E_t$ are strong solution sets to the $\K_{A_t,B_t,F_t,G_t}$-obstacle problem, this property holds for $\mu$-a.e.\ $x\in\Omega.$  If $u(x)<\psi_1(x),$ then by density of $I$ in $\R$, there exists $t\in I$ such that $u(x)<t<\psi_1(x).$  This implies that $x\in A_t\setminus E_t,$ a contradiction.  Hence we have that $\psi_1(x)\le u(x).$  By a similar argument we have that $u(x)\le\psi_2(x),$ and so $\psi_1\le u\le\psi_2$ $\mu$-a.e.\ in $\Omega.$

To show that $u$ satisfies the correct trace, let $$z\in\partial\Omega\setminus\bigcup_{t\in I}(\partial F_t\cup\partial G_t)$$ be such that $f(z)\le g(z),$ and such that for all $t\in I,$ the conclusion of Lemma~\ref{lem:StrongSolnTrace} holds if $z\in\partial\Omega\cap F_t$ or $z\in\partial\Omega\setminus\overline G_t.$  By the hypotheses, $\Ha$-a.e.\ $z\in\partial\Omega$ satisfy these conditions.  

Let $t\in I$ such that $t<f(z).$  It follows that $z\in\partial\Omega\cap F_t,$ and so by Lemma~\ref{lem:StrongSolnTrace}, there exists $r_z>0$ such that $\mu(B(z,r_z)\cap\Omega\setminus E_t)=0.$  Thus $y\in E_t$ for $\mu$-a.e.\ $y\in B(z,r_z)\cap\Omega,$ which implies that $u(y)\ge t$ for such $y.$  Hence $Tu(z)\ge t,$ and since this holds for all $t\in I$ with $t<f(z),$ we have that $f(z)\le Tu(z).$       

Likewise, let $t\in I$ such that $g(z)<t.$  Then $z\in\partial\Omega\setminus\overline G_t,$ and so by Lemma~\ref{lem:StrongSolnTrace}, there exists $r_z>0$ such that $\mu(B(z,r_z)\cap\Omega\cap E_t)=0.$  Thus $y\not\in E_t$ for $\mu$-a.e.\ $y\in B(z,r_z)\cap\Omega,$ and so it follows that $u(y)\le t$ for all such $y$.  This implies that $Tu(z)\le t,$ and since this holds for all $t\in I$ with $g(z)<t,$ we have that $Tu(z)\le g(z).$  Therefore, it follows that $f\le Tu\le g$ $\Ha$-a.e.\ on $\partial\Omega.$ 

It remains to show that $u\in BV(\Omega)$ and that $u$ satisfies the energy minimization property.  To this end, let $v\in\K_{\psi_1,\psi_2,f,g}(\Omega).$  Such a $v$ exists since $\K_{\psi_1,\psi_2,f,g}(\Omega)\ne\varnothing$.  For $t\in J,$ it follows from the definition of $u$ and \eqref{eq:Nest} that 
$$\{u>t\}=\bigcup_{I\ni s>t}E_s\sqsubset E_t\subset\{u\ge t\}.$$  Since $\mu$ is Radon and doubling, hence  $\sigma$-finite on $X$, we have that $$\mu(\{u=t\})=0$$ for $\Leb$-a.e.\ $t\in J.$  For all such $t,$ we have that $E_t\sqsubset\{u>t\},$ hence $\chi_{E_t}=\chi_{\{u>t\}}$ $\mu$-a.e.\ in $\Omega.$  Thus $\{u>t\}$ is a strong solution set to the $\K_{A_t,B_t,F_t,G_t}$-obstacle problem for $\Leb$-a.e.\ $t\in\R.$  Since $v\in\K_{\psi_1,\psi_2,f,g}(\Omega),$ we have by Lemma~\ref{lem:NonEmpty} that $\chi_{\{v>t\}}\in\K_{A_t,B_t,F_t,G_t}(\Omega)$ for $\Leb$-a.e.\ $t\in\R.$  Hence by the coarea formula, it follows that 
\begin{align*}
\|Du\|(\Omega)=\int_\R P(\{u>t\},\Omega)dt\le\int_{\R}P(\{v>t\},\Omega)dt=\|Dv\|(\Omega)<\infty.
\end{align*}
Therefore as $u\in BV(\Omega)$, we have that $u\in\K_{\psi_1,\psi_2,f,g}(\Omega)$, and $u$ satisfies the energy minimization property. Thus $u$ is a strong solution to the $\K_{\psi_1,\psi_2,f,g}$-obstacle problem. 
\end{proof} 

In this proof, we have again used the fact that $\Omega$ is a uniform domain to apply the BV extension result \cite[Theorem~3.1]{L1}, as we did in the proof of Proposition~\ref{prop:StrongSolnSet}.  However, as pointed out in Remark~\ref{rem:Extension}, we could replace the uniform domain assumption with the assumption that $\Omega$ is a BV extension domain for which extensions do not accumulate BV-energy on the boundary.   

In the following manner, it is also the case that strong solutions to the $\K_{\psi_1,\psi_2,f,g}$-obstacle problem yield strong solutions to the problem where the obstacles and boundary data are superlevel sets of the original functions: 

\begin{lem}\label{lem:DObsSupLevelSoln}
Let $f,g\in C(\partial\Omega)$ and let $\psi_1,\psi_2\in C(\overline\Omega)$. Furthermore, suppose that $\K_{\psi_1,\psi_2,f,g}(\Omega)\ne\varnothing.$  If $v\in\K_{\psi_1,\psi_2,f,g}(\Omega)$ is a strong solution to the $\K_{\psi_1,\psi_2,f,g}$-obstacle problem, then $\chi_{\{v>t\}}$ is a strong solution to the $\K_{A_t,B_t,F_t,G_t}$-obstacle problem for $\Leb$-a.e.\ $t\in\R,$ where $A_t,B_t,F_t,$ and $G_t$ are as defined in the proof of Theorem~\ref{thm:ExistStrongSoln}.
\end{lem}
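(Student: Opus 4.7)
The plan is to leverage the construction from the proof of Theorem~\ref{thm:ExistStrongSoln} together with the coarea formula and the minimality property of strong solution sets. Let me fix once and for all the extensions $\Ext f, \Ext g \in C(X)\cap BV(X)$ as in Theorem~\ref{thm:ExistStrongSoln}, and set $A_t, B_t, F_t, G_t$ accordingly. By the arguments given in the proof of Theorem~\ref{thm:ExistStrongSoln}, there is a set $J \subset \R$ of full Lebesgue measure such that for every $t \in J$: (i) the quadruple $(A_t, B_t, F_t, G_t)$ satisfies the hypotheses of Proposition~\ref{prop:StrongSolnSet}; (ii) $\chi_{\{v>t\}} \in \K_{A_t, B_t, F_t, G_t}(\Omega)$ (by Lemma~\ref{lem:NonEmpty}); and (iii) a minimal strong solution set $E_t$ exists to the $\K_{A_t, B_t, F_t, G_t}$-obstacle problem (by Proposition~\ref{prop:StrongSolnSet} and Lemma~\ref{lem:MinStrongSolnSet}).

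First I would invoke the construction of Theorem~\ref{thm:ExistStrongSoln} to build a particular strong solution $u \in \K_{\psi_1,\psi_2,f,g}(\Omega)$ of the form $u(x) = \sup\{t\in I : x \in E_t\}$ for a countable dense subset $I \subset J$. As shown in that proof, $\{u > t\} = E_t$ $\mu$-a.e.\ in $\Omega$ for $\Leb$-a.e.\ $t \in \R$. Applying the coarea formula yields
\begin{equation*}
\|Du\|(\Omega) = \int_\R P(\{u>t\}, \Omega)\,dt = \int_\R P(E_t, \Omega)\,dt.
\end{equation*}
On the other hand, since $\chi_{\{v > t\}} \in \K_{A_t, B_t, F_t, G_t}(\Omega)$ for $t \in J$ and $\chi_{E_t}$ is a strong solution to the corresponding obstacle problem, we have $P(E_t, \Omega) \le P(\{v>t\}, \Omega)$ for a.e.\ $t$. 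Integrating and again applying coarea gives
\begin{equation*}
\|Du\|(\Omega) = \int_\R P(E_t,\Omega)\,dt \le \int_\R P(\{v>t\}, \Omega)\,dt = \|Dv\|(\Omega).
\end{equation*}

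Now since $v$ is, by hypothesis, a strong solution to the $\K_{\psi_1,\psi_2,f,g}$-obstacle problem and $u \in \K_{\psi_1,\psi_2,f,g}(\Omega)$, the reverse inequality $\|Dv\|(\Omega) \le \|Du\|(\Omega)$ also holds, forcing equality throughout. Therefore
\begin{equation*}
\int_\R \bigl( P(\{v>t\},\Omega) - P(E_t,\Omega) \bigr)\,dt = 0,
\end{equation*}
and since the integrand is nonnegative a.e., we conclude that $P(\{v>t\},\Omega) = P(E_t,\Omega)$ for $\Leb$-a.e.\ $t \in J$. For each such $t$, $\chi_{\{v>t\}}$ belongs to $\K_{A_t, B_t, F_t, G_t}(\Omega)$ and attains the minimal BV-energy value $P(E_t,\Omega) = \min\{\|Dw\|(\Omega) : w \in \K_{A_t, B_t, F_t, G_t}(\Omega)\}$; hence $\chi_{\{v>t\}}$ is itself a strong solution to the $\K_{A_t,B_t,F_t,G_t}$-obstacle problem.

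The only potentially subtle step is verifying that Theorem~\ref{thm:ExistStrongSoln}'s construction really delivers $\{u > t\} = E_t$ $\mu$-a.e.\ for a.e.\ $t$, so that the coarea identity reduces $\|Du\|(\Omega)$ to the integral of $P(E_t,\Omega)$. This was established already inside the proof of Theorem~\ref{thm:ExistStrongSoln} using the nesting property \eqref{eq:Nest}, Lemma~\ref{lem:GenIntUnion}, and the fact that $\mu(\{u = t\}) = 0$ for a.e.\ $t$; I would simply cite that computation rather than reprove it.
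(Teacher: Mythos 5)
Your proposal is correct and follows essentially the same route as the paper: compare $v$ with the constructed solution $u$ whose superlevel sets are strong solution sets, invoke Lemma~\ref{lem:NonEmpty} to place $\chi_{\{v>t\}}$ in $\K_{A_t,B_t,F_t,G_t}(\Omega)$ for a.e.\ $t$, and use the coarea formula together with $\|Du\|(\Omega)=\|Dv\|(\Omega)$ to force $P(\{v>t\},\Omega)=P(\{u>t\},\Omega)$ for $\Leb$-a.e.\ $t$. The paper phrases the final step as a contradiction on a positive-measure exceptional set $K$, whereas you argue directly that a nonnegative integrand with zero integral vanishes a.e.; the content is identical.
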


\begin{proof}
Let $u$ be the strong solution to the $\K_{\psi_1,\psi_2,f,g}$-obstacle problem constructed in the proof of Theorem~\ref{thm:ExistStrongSoln} above.  In that proof, we saw that $\chi_{\{u>t\}}$ is a strong solution to the $\K_{A_t,B_t,F_t,G_t}$-obstacle problem for $\Leb$-a.e.\ $t\in\R.$  Let $\wtil J\subset\R$ be the set of all such $t.$   

Let $v$ be a strong solution to the $\K_{\psi_1,\psi_2,f,g}$-obstacle problem.  Since $v\in\K_{\psi_1,\psi_2,f,g}(\Omega),$  we have $\chi_{A_t}\le\chi_{\{v>t\}}\le\chi_{B_t}$ $\mu$-a.e.\ in $\Omega$ for all $t\in\R.$  By the coarea formula, $P(\{v>t\},\Omega)<\infty$ for $\Leb$-a.e.\ $t\in\R$.  Furthermore, by Lemma~\ref{lem:SupLevelTrace}, we have that $\chi_{F_t}\le T\chi_{\{v>t\}}\le\chi_{G_t}$ $\Ha$-a.e.\ on $\partial\Omega$ for $\Leb$-a.e.\ $t\in\R$, and so 
$$\chi_{\{v>t\}}\in\K_{A_t,B_t,F_t,G_t}(\Omega)$$ for $\Leb$-a.e.\ $t\in\R.$  Let $\mathcal{J}\subset\R$ be the set of all such $t.$ 

 Suppose that there exists a set $K\subset\mathcal{J}$ with $\Leb(K)>0$ such that for all $t\in K,$ $\chi_{\{v>t\}}$ is not a strong solution to the $\K_{A_t,B_t,F_t,G_t}$-obstacle problem.  For each $t\in \wtil J\cap K,$ it then follows that $P(\{v>t\},\Omega)>P(\{u>t\},\Omega).$  By the coarea formula, we have that 
\begin{align*}\int_{(\mathcal{J}\cap\wtil J)\setminus K}P(\{u&>t\},\Omega)dt+\int_{\wtil J\cap K}P(\{u>t\},\Omega)dt=\|Du\|(\Omega)=\|Dv\|(\Omega)\\
	&=\int_{(\mathcal{J}\cap\wtil J)\setminus K}P(\{v>t\},\Omega)dt+\int_{\wtil J\cap K}P(\{v>t\},\Omega)dt\\
	&>\int_{(\mathcal{J}\cap\wtil J)\setminus K}P(\{v>t\},\Omega)dt+\int_{\wtil J\cap K}P(\{u>t\},\Omega)dt.\\
\end{align*}  
Hence it follows that $$\int_{(\mathcal{J}\cap\wtil J)\setminus K}P(\{u>t\},\Omega)dt>\int_{(\mathcal{J}\cap\wtil J)\setminus K}P(\{v>t\},\Omega)dt.$$
However for all $t\in(\mathcal{J}\cap\wtil J)\setminus K$, we have that $P(\{u>t\},\Omega)\le P(\{v>t\},\Omega)$ since $\chi_{\{u>t\}}$ is a strong solution to the $\K_{A_t,B_t,F_t,G_t}$-obstacle problem.  Thus $\Leb((\mathcal{J}\cap\wtil J)\setminus K)=0,$ and so we have that
$$\|Dv\|(\Omega)>\|Du\|(\Omega),$$ a contradiction.  Hence it follows that $\chi_{\{v>t\}}$ is a strong solution to the $\K_{A_t,B_t,F_t,G_t}$-obstacle problem for $\Leb$-a.e.\ $t\in\R.$   
\end{proof}

For $\psi_1,\psi_2,f,$ and $g$ as in Theorem~\ref{thm:ExistStrongSoln}, strong solutions to the $\K_{\psi_1,\psi_2,f,g}$-obstacle problem may fail to be unique or even continuous.  See \cite[Section~6]{LMSS} for examples in the weighted Euclidean disk illustrating this for the special case of the Dirichlet problem with (single) continuous boundary data.  As such, we do not have a comparison principle in general.  However, it was shown in \cite{K} that unique minimal strong solutions exist for the Dirichlet problem with continuous boundary data, resulting in a comparison principle for such minimal strong solutions.  We prove the analogous result for the $\K_{\psi_1,\psi_2,f,g}$-obstacle problem and likewise obtain a comparison principle in Theorem~\ref{cor:DObsCompThm}.   

\begin{prop}\label{prop:DObsExistMin}
Let $f,g\in C(\partial\Omega)$, let $\psi_1,\psi_2\in C(\overline\Omega)$, and suppose that $\K_{\psi_1,\psi_2,f,g}(\Omega)\ne\varnothing.$  Then the function $u$ constructed in the proof of Theorem~\ref{thm:ExistStrongSoln} is the unique minimal strong solution to the $\K_{\psi_1,\psi_2,f,g}$-obstacle problem.
\end{prop}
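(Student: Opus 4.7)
The plan is to show that the strong solution $u$ constructed in the proof of Theorem~\ref{thm:ExistStrongSoln} satisfies $u\le v$ $\mu$-a.e.\ in $\Omega$ for every strong solution $v$ to the $\K_{\psi_1,\psi_2,f,g}$-obstacle problem. Once this is established, $u$ is by definition a minimal strong solution, and uniqueness $\mu$-a.e.\ then follows automatically: any two minimal strong solutions satisfy $u_1\le u_2$ and $u_2\le u_1$ $\mu$-a.e.

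The central mechanism is to pass the comparison to the level of superlevel sets, where minimality of the sets $E_t$ produced by Lemma~\ref{lem:MinStrongSolnSet} can be applied directly. First, I invoke Lemma~\ref{lem:DObsSupLevelSoln} on $v$ to obtain a set $J'\subset\R$ with $\Leb(\R\setminus J')=0$ such that $\chi_{\{v>t\}}$ is a strong solution to the $\K_{A_t,B_t,F_t,G_t}$-obstacle problem for every $t\in J'$. Intersecting with the set $J\subset\R$ from the proof of Theorem~\ref{thm:ExistStrongSoln} (whose complement is $\Leb$-null, and along which the minimal strong solution set $E_t$ is defined), I obtain a set $J\cap J'$ of full measure in $\R$. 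For each $t\in J\cap J'$, minimality of $E_t$ gives $E_t\sqsubset\{v>t\}$.

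Next I will combine this with the identification $\{u>t\}\sqsubset E_t$ for $t\in J$, which was established inside the proof of Theorem~\ref{thm:ExistStrongSoln} using the nestedness \eqref{eq:Nest} and the fact that $\mu(\{u=t\})=0$ for $\Leb$-a.e.\ $t$. Chaining these inclusions yields
\[
\{u>t\}\sqsubset\{v>t\}\qquad\text{for $\Leb$-a.e.\ }t\in\R.
\]
I then pick a countable dense subset $T\subset J\cap J'$ for which this holds with a specific $\mu$-null exceptional set $N_t$ for each $t\in T$, and set $N=\bigcup_{t\in T}N_t$, which remains $\mu$-null. If $x\in\Omega\setminus N$ were to satisfy $u(x)>v(x)$, density of $T$ would supply $t\in T$ with $v(x)<t<u(x)$, giving $x\in\{u>t\}\setminus\{v>t\}$ and contradicting $x\notin N_t$. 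Hence $u\le v$ $\mu$-a.e.\ in $\Omega$.

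The main subtlety, more bookkeeping than substantive, lies in decoupling the comparison from the particular countable dense set $I\subset J$ fixed during the construction of $u$: Lemma~\ref{lem:DObsSupLevelSoln} produces the minimality inclusion only for $\Leb$-a.e.\ $t$, which need not lie in $I$. The nestedness \eqref{eq:Nest} together with the relation $\{u>t\}\sqsubset E_t$ for every $t\in J$ is precisely what allows the argument to be carried out along $J\cap J'$ without reference to $I$, and it is essential that this identification is available for all $t\in J$ (up to a $\mu$-null set depending on $t$), not merely for $t\in I$.
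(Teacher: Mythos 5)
Your argument is correct and follows essentially the same route as the paper: apply Lemma~\ref{lem:DObsSupLevelSoln} to the competing solution $v$, use minimality of $E_t$ to get $E_t\sqsubset\{v>t\}$ for a.e.\ $t$, combine with $\{u>t\}\sqsubset E_t$ (valid for all $t\in J$ by \eqref{eq:Nest}, as established in the proof of Theorem~\ref{thm:ExistStrongSoln}), and conclude pointwise off a null set via a countable dense set of levels. The paper packages the exceptional null set as two unions $U$ and $V$ indexed over the dense sets $I$ and $I'$ rather than first stating the superlevel-set comparison $\{u>t\}\sqsubset\{v>t\}$, but the substance is identical.
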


\begin{proof}
Let $I\subset J\subset\R$ be as in the proof of Theorem~\ref{thm:ExistStrongSoln}, and recall that $\Leb(\R\setminus J)=0$ with $I$ countable and dense in $\R.$  Let $$u(x)=\sup\{t\in I:x\in E_t\}$$ be the strong solution constructed in that proof and let $u'$ be another strong solution.  By Lemma~\ref{lem:DObsSupLevelSoln}, there exists a set $J'\subset J$ with $\Leb(J\setminus J')=0$ such that $\chi_{\{u'>t\}}$ is a strong solution to the $\K_{A_t,B_t,F_t,G_t}$-obstacle problem for all $t\in J'.$ Let $I'\subset J'\subset J$ be a countable set, dense in $\R.$  

For each $t'\in I',$ we have that $E_{t'}\sqsubset\{u'>t'\}$ since $E_{t'}$ is the minimal strong solution set to the $\K_{A_{t'},B_{t'},F_{t'},G_{t'}}$-obstacle problem.  Therefore, setting $$U:=\bigcup_{t'\in I'}E_{t'}\setminus\{u'>t'\},$$ we have that $\mu(U)=0.$  Likewise, for each $t\in I$ and $t'\in I'$ with $t'<t,$ we have that $E_t\sqsubset E_{t'}$ by \eqref{eq:Nest} and the fact that $I'\subset J.$  Setting $$V:=\bigcup_{t'\in I'}\bigcup_{t\in I,t'<t}E_t\setminus E_{t'},$$ it follows that $\mu(V)=0.$

Let $x\in\Omega\setminus{(U\cup V)}$, and suppose that $u'(x)<u(x).$  By the definition of $u$, there exists $t\in I$ such that $u'(x)<t\le u(x)$ and $x\in E_t.$  By the density of $I'$ in $\R$, there exists $t'\in I'$ such that 
$$u'(x)<t'<t\le u(x).$$  Since $x\not\in V,$ we have that $x\in E_{t'}\setminus\{u'>t'\}.$ This is a contradiction, as $x\not\in U,$ and so we have that $u(x)\le u'(x)$ for all $x\in\Omega\setminus(U\cup V).$  Thus $u$ is the minimal strong solution to the $\K_{\psi_1,\psi_2,f,g}$-obstacle problem.  Uniqueness $\mu$-a.e.\ follows from the minimality.  
\end{proof}

\begin{remark}\label{remark:MaxSoln}
For $f,g,\psi_1,\psi_2$ as above, we note that if $v\in\K_{\psi_1,\psi_2,f,g}(\Omega),$ then $-v\in\K_{-\psi_2,-\psi_1,-g,-f}(\Omega).$  Thus there exists a unique minimal strong solution $w$ to the $\K_{-\psi_2,-\psi_1,-g,-f}$-obstacle problem.  It then follows that $-w$ is the unique maximal strong solution to the $\K_{\psi_1,\psi_2,f,g}$-obstacle problem.  
\end{remark}

\begin{proof}[Proof of Theorem~\ref{cor:DObsCompThm}]
Existence of $u_1$ and $u_2$ follows from Proposition~\ref{prop:DObsExistMin}.  From the hypotheses, we have that $\psi_1\le\psi_2\le u_2$, $\psi_1\le u_1,$ and $\min\{u_1,u_2\}\le u_1\le\phii_1$ $\mu$-a.e.\ in $\Omega.$  Thus we see that $\psi_1\le\min\{u_1,u_2\}\le\phii_1$ $\mu$-a.e.\ in $\Omega.$  Similarly $\psi_2\le u_2\le\max\{u_1,u_2\}$, $u_1\le\phii_1\le\phii_2$, and $u_2\le\phii_2$ $\mu$-a.e.\ in $\Omega,$ and so it follows that $\psi_2\le\max\{u_1,u_2\}\le\phii_2$ $\mu$-a.e.\ in $\Omega.$   

Let $z\in\partial\Omega$ such that $f_1(z)\le f_2(z),$ $g_1(z)\le g_2(z)$, $f_1(z)\le Tu_1(z)\le g_1(z)$, and $f_2(z)\le Tu_2(z)\le g_2(z).$  These conditions hold for $\Ha$-a.e.\ $z\in\partial\Omega.$  For such $z$, it follows that 
\begin{align*}T\min\{u_1,u_2\}(z)&=\min\{Tu_1(z),Tu_2(z)\}\\
	T\max\{u_1,u_2\}(z)&=\max\{Tu_1(z),Tu_2(z)\},
\end{align*}
(see the proof of \cite[Lemma~4.4]{K}, for example).  Thus, $$f_1\le T\min\{u_1,u_2\}\le g_1\text{ and }f_2\le T\max\{u_1,u_2\}\le g_2$$ $\Ha$-a.e.\ on $\partial\Omega.$  By \cite[Lemma~3.1]{L2}, we have that 
\begin{align*}
\|D\min\{u_1,u_2\}\|(\Omega)+\|D\max\{u_1,u_2\}\|(\Omega)\le\|Du_1\|(\Omega)+\|Du_2\|(\Omega)<\infty,
\end{align*}
and so it follows that $\min\{u_1,u_2\}\in\K_1(\Omega)$ and $\max\{u_1,u_2\}\in\K_2(\Omega).$

If $\|D\min\{u_1,u_2\}\|(\Omega)>\|Du_1\|(\Omega),$ then $\|D\max\{u_1,u_2\}\|(\Omega)<\|Du_2\|(\Omega).$ This is a contradiction, since $u_2$ is a strong solution to the $\K_2$-obstacle problem and $\max\{u_1,u_2\}\in\K_2(\Omega),$ and so it follows that $$\|D\min\{u_1,u_2\}\|(\Omega)\le\|Du_1\|(\Omega).$$  In particular, $\min\{u_1,u_2\}$ is a strong solution to the $\K_1$-obstacle problem.  A similar argument shows that $\max\{u_1,u_2\}$ is a strong solution to the $\K_2$-obstacle problem.  Since $u_1$ is a minimal strong solution to the $\K_1$-obstacle problem, it follows that $$u_1\le\min\{u_1,u_2\}\le u_2$$ $\mu$-a.e.\ in $\Omega.$  

An analogous argument shows that $\max\{v_1,v_2\}$ is a strong solution to the $\K_2$-obstacle problem, and so maximality of $v_2$ implies that $$v_1\le\max\{v_1,v_2\}\le v_2$$ $\mu$-a.e.\ in $\Omega.$    
\end{proof}

\section{Stability with respect to single boundary condition}

In this section we compare the obstacle problem with double boundary condition to the standard problem when only one boundary condition is imposed.  We begin by providing straightforward examples, in the special case without obstacles, that for continuous boundary data $f\le g$, minimal and maximal strong solutions to the $\K_{f,g}$-problem are not in general strong solutions to the Dirichlet problem with boundary data $f$ or $g$.  Here we set $$\K_{f,g}(\Omega):=\{u\in BV(\Omega):f\le Tu\le g\;\Ha\text{-a.e.\ on } \partial\Omega\}.$$

\begin{example}
Let $\Omega\subset X$ be a bounded domain.  If $f,g\in C(\partial\Omega)$ are both not constant such that $$\sup_{\partial\Omega}f<\inf_{\partial\Omega} g,$$ then $u\equiv\sup_{\partial\Omega} f$ is the minimal strong solution to the $\K_{f,g}$-problem.  Likewise, $v\equiv\inf_{\partial\Omega}g$ is the maximal strong solution.  However, neither $u$ nor $v$ are strong solutions to the Dirichlet problems with boundary data $f$ or $g$.  
\end{example}

\begin{example}\label{ex:DiffProb}
Let $\Omega=\D\subset\R^2,$ and consider the continuous functions $f,g:\partial\Omega\to\R$ given by 
$$f(x,y)=y\qquad\text{and}\qquad g(x,y)=y+1.$$
From Proposition~\ref{prop:DObsExistMin}, the minimal strong solution to the $\K_{f,g}$-problem is given by $u(x)=\sup\{t\in I:x\in E_t\}$ where $E_t$ is the minimal strong solution to the $\K_{F_t,G_t}$-problem.  As in the previous section, we let $F_t:=\{f>t\}$ and $G_t:=\{g>t\}.$ 

Since $\Omega$ is the unit disk, it follows that for each $t\in\R$, the minimal strong solution set to the $\K_{F_t,G_t}$-obstacle problem is a subset of $\Omega$ which has boundary in $\Omega$ consisting of straight line segments.  Denoting $E_{F_t}$ and $E_{G_t}$ the minimal strong solutions to the Dirichlet problems with boundary data $\chi_{F_t}$ and $\chi_{G_t}$ respectively, we see that $E_{F_t}$ is the region of the disk bounded by the arc $F_t$ and the chord joining the endpoints of $F_t,$ and $E_{G_t}$ is the analogous region (see Figure~\ref{fig:G_t}). The minimal strong solution $E_t$ to the $\K_{F_t,G_t}$-obstacle problem can be chosen from the sets $E\subset\Omega$ for which $E_{F_t}\subset E\subset E_{G_t}$ and such that $\partial E\cap\Omega$ is a straight line segment lying in $E_{G_t}\setminus E_{F_t}.$ 

For $t<1/2,$ the set described above with the smallest perimeter, that is, whose boundary in $\Omega$ is comprised by the shortest line segment is the set $E_{G_t}.$  Likewise for $1/2\le t$, $E_{F_t}$ is the minimal strong solution set.  Hence, we have that 
$$
E_t=
\begin{cases} 
E_{G_t},&t<1/2\\
E_{F_t},&1/2\le t.
\end{cases} 
$$
Note that for $t<0,$ $G_t=\partial\Omega$ and so $E_{G_t}=\Omega.$  Likewise for $t>1,$ $F_t=\varnothing,$ and so $E_{F_t}=\varnothing.$ From the definition of $u$, it follows that $0\le u\le 1,$ and 
$$u(x,y)=
\begin{cases}
y+1,&-1<y<-1/2\\
1/2,&-1/2\le y\le 1/2\\
y,&1/2<y.
\end{cases}
$$
Hence, 
$$
Tu(x,y)=
\begin{cases}
g(x,y),&-1<y<-1/2\\
1/2,&-1/2\le y\le 1/2\\
f(x,y),&1/2<y,
\end{cases}
$$
and so $u$ is not a strong solution to the Dirichlet problem with either boundary data $f$ or $g,$ see Figure~\ref{fig:uGraph}.

\begin{figure}[h]
\centering
\begin{minipage}[t]{0.45\textwidth}
\centering
\definecolor{ffqqqq}{rgb}{1.,0.,0.}
\begin{tikzpicture}[line cap=round,line join=round,>=triangle 45,x=2.25cm,y=2.25cm]
\clip(-1.25,-1.25) rectangle (1.25,2.);
\draw [line width=0.4pt] (0.,0.) circle (2.25cm);
\draw [shift={(0.,0.)},line width=1.6pt,color=ffqqqq,fill=ffqqqq,fill opacity=0.1899999976158142]  plot[domain=-0.5346160946250951:3.6682853087080156,variable=\t]({1.*1.*cos(\t r)+0.*1.*sin(\t r)},{0.*1.*cos(\t r)+1.*1.*sin(\t r)});
\begin{small}
\draw[color=black] (0.012025260274884805,1.3110295133082293) node {$G_t$};
\draw[color=ffqqqq] (0.02307497270946536,0.3607542439343041) node {$E_{G_t}$};
\end{small}
\end{tikzpicture}
\caption{\small{The sets $G_t$ and $E_{G_t}$ when $t=1/2.$}}\label{fig:G_t}
\end{minipage}
\hfill
\begin{minipage}[t]{0.45\textwidth}
\centering
\includegraphics[scale=0.45]{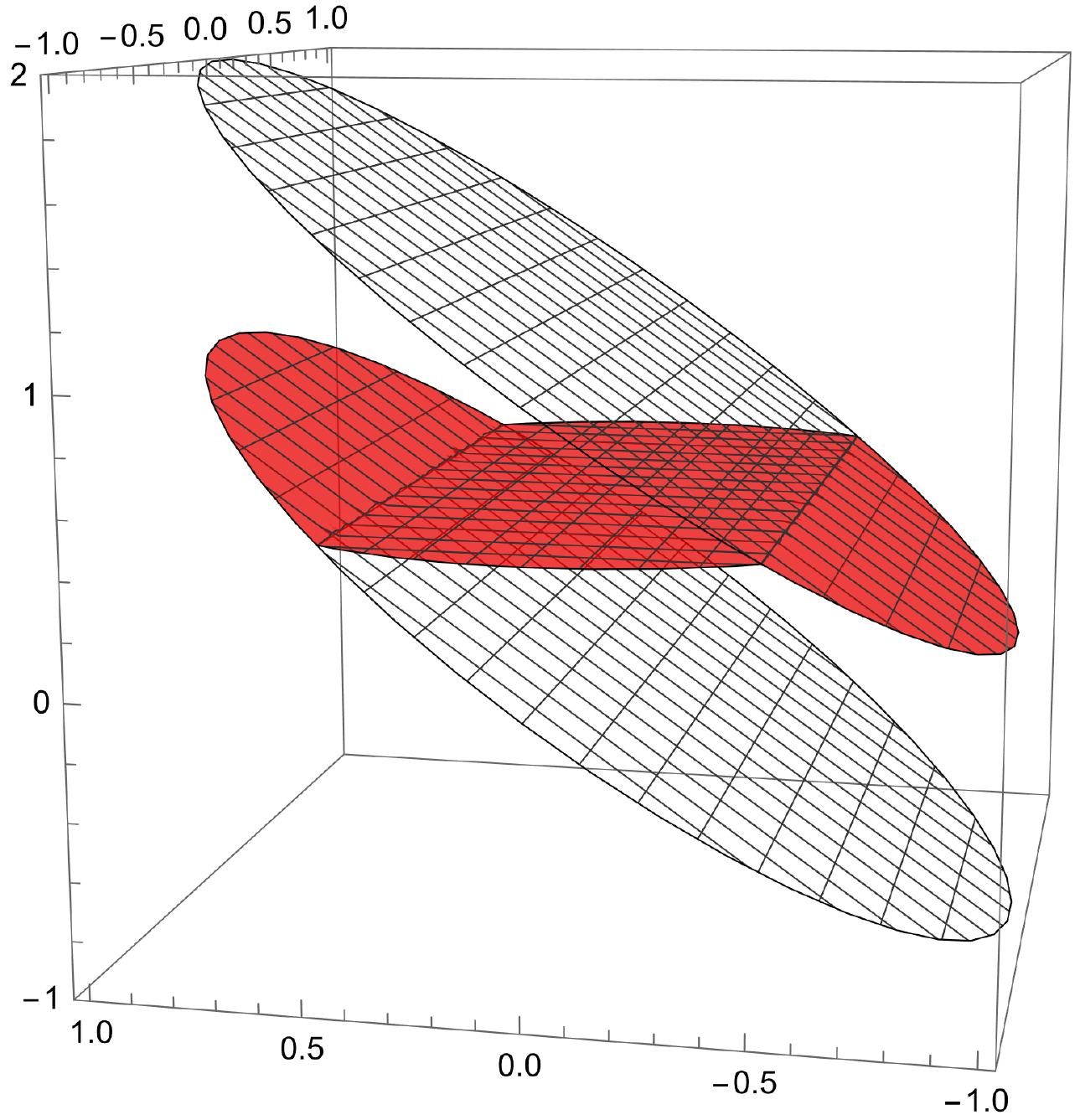}
\caption{\small{The shaded surface is the graph of $u$, whereas the upper and lower transparent surfaces are the graphs of the strong solutions to the Dirichlet problem with boundary data $g$ and $f$ respectively.}}\label{fig:uGraph}
\end{minipage}
\end{figure}

\end{example}

\begin{remark}
For $u,$ $f$, and $g$ as in Example~\ref{ex:DiffProb}, it follows that $u$ is the strong minimal solution to both the $\K_{Tu,g}$ and $\K_{f,Tu}$-problems, and $u$ is the minimal strong solution to the Dirichlet problem with boundary data $Tu.$  Thus, this example shows that all three scenarios are possible: for continuous boundary data $f$ and $g$ with $f\le g$, the minimal strong solution to the $\K_{f,g}$-problem may be the minimal strong solution to the Dirichlet problem with boundary data $f$ or $g.$   
\end{remark}

In Example~\ref{ex:DiffProb}, the solution $u$ fails to be a solution to the Dirichlet problem with boundary data $f$ or $g$ in part because the ranges of $f$ and $g$ overlap and the range of $f$ is not contained in the range of $g$.  In the metric setting, this proves to be a sufficient condition for the distinction of solutions to the $\K_{f,g}$-problem and Dirichlet problems, as shown in the following proposition.  

For the remainder of this section, assume that $\Omega$ is a bounded uniform domain with boundary of positive mean curvature such that $\Ha(\partial\Omega)<\infty$, $\Ha|_{\partial\Omega}$ is doubling, and that \eqref{eq:LowerAR} holds.  Furthermore, assume that $\Ha(\{z\})=0$ for all $z\in\partial\Omega.$

\begin{prop}\label{prop:DifferentProb}
Let $f,g\in C(\partial\Omega)$ be such that $f\le g$ $\Ha$-a.e.\ on $\partial\Omega,$ and let $u\in\K_{f,g}(\Omega)$ be the minimal strong solution to the $\K_{f,g}$-obstacle problem.  If  
$$\inf_{\partial\Omega} f<\inf_{\partial\Omega} g\le\sup_{\partial\Omega}f<\sup_{\partial\Omega} g,$$
then $\inf_{\partial\Omega}g\le u\le\sup_{\partial\Omega} f$ $\mu$-a.e.\ in $\Omega.$  In particular, $u$ is not a strong solution to the Dirichlet problem with boundary data $f$ or $g$.
\end{prop}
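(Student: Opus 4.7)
Write $m := \inf_{\partial\Omega} g$ and $M := \sup_{\partial\Omega} f$; the hypothesis $\inf_{\partial\Omega} g \le \sup_{\partial\Omega} f$ gives $m \le M$. My plan is to verify that both truncations $v := \min\{u, M\}$ and $w := \max\{u, m\}$ lie in $\K_{f,g}(\Omega)$, and then to extract the two-sided bound by combining the minimality of $u$ with a coarea argument. For the upper bound, since $f \le M$ on $\partial\Omega$, the trace $Tv = \min\{Tu, M\}$ still satisfies $f \le Tv \le g$ $\Ha$-a.e., and truncation does not increase the BV-energy; hence $v \in \K_{f,g}(\Omega)$ with $\|Dv\|(\Omega) \le \|Du\|(\Omega)$, so $v$ is itself a strong solution, and Proposition~\ref{prop:DObsExistMin} applied to the minimality of $u$ forces $u \le v \le M$ $\mu$-a.e.

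The lower bound is the main obstacle: for the symmetric truncation $w = \max\{u, m\}$, the minimality of $u$ only yields the vacuous inequality $u \le w$, so I would work instead at the level of superlevel sets. Since $\{w > t\} = \Omega$ $\mu$-a.e.\ for $t < m$ and $\{w > t\} = \{u > t\}$ $\mu$-a.e.\ for $t \ge m$, the coarea formula gives
\begin{equation*}
\|Du\|(\Omega) - \|Dw\|(\Omega) = \int_{-\infty}^{m} P(\{u > t\}, \Omega) \, dt.
\end{equation*}
As $w \in \K_{f,g}(\Omega)$ and $u$ minimizes the BV-energy over this class, the left-hand side is non-positive, so $P(\{u > t\}, \Omega) = 0$ for $\Leb$-a.e.\ $t < m$. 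The relative isoperimetric inequality, which follows from the $(1,1)$-Poincar\'e inequality, together with connectedness of $\Omega$, then forces $\{u > t\}$ to be either $\mu$-negligible in $\Omega$ or to have $\mu$-negligible complement in $\Omega$. To rule out the first case, Lemma~\ref{lem:SupLevelTrace} gives $T\chi_{\{u > t\}} = \chi_{\{Tu > t\}} \ge \chi_{\{f > t\}}$ $\Ha$-a.e.\ on $\partial\Omega$ for $\Leb$-a.e.\ $t$; and for $t < m \le M$ the open set $\{f > t\} \cap \partial\Omega$ is nonempty by continuity of $f$, hence has positive $\Ha$-measure by the lower codimension $1$ Ahlfors regularity of $\Ha|_{\partial\Omega}$. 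Thus $u > t$ $\mu$-a.e.\ for $\Leb$-a.e.\ $t < m$, and letting $t \nearrow m$ along such values yields $u \ge m$ $\mu$-a.e.

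Once $m \le u \le M$ $\mu$-a.e.\ is in hand, the last assertion follows at once: the strict inequality $\inf_{\partial\Omega} f < m$ combined with continuity of $f$ and the Ahlfors regularity of $\Ha|_{\partial\Omega}$ produces a set of positive $\Ha$-measure on $\partial\Omega$ where $f < m \le Tu$, ruling out $Tu = f$; symmetrically $M < \sup_{\partial\Omega} g$ rules out $Tu = g$, so $u$ solves neither Dirichlet problem. The essential asymmetry of the argument is that Proposition~\ref{prop:DObsExistMin} controls $u$ only from above among strong solutions, so the lower bound has to bypass the minimal-solution comparison entirely and exploit directly the BV-minimization via coarea, with the trace condition $f \le Tu$ serving to tip the isoperimetric dichotomy toward ``full measure.''
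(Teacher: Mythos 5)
Your argument is correct, but it takes a genuinely different route from the paper. The paper first invokes uniqueness of the minimal strong solution to identify $u$ with the function $x\mapsto\sup\{t\in I:x\in E_t\}$ built in the proof of Theorem~\ref{thm:ExistStrongSoln}, and then analyzes the minimal strong solution sets $E_t$ directly: for $\inf_{\partial\Omega}f<t<\inf_{\partial\Omega}g$ one has $G_t\supset\partial\Omega$, so $\chi_\Omega$ is admissible, forcing $P(E_t,\Omega)=0$, while $\mu(E_t)>0$ by the boundary trace lemmas, and the isoperimetric/connectedness dichotomy gives $E_t=\Omega$; for $t>\sup_{\partial\Omega}f$ one has $F_t=\varnothing$ and $E_t=\varnothing$. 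You instead work directly with $u$ as a BV-minimizer, never touching the construction: the upper bound comes from minimality applied to the competitor $\min\{u,M\}$, and the lower bound from the coarea identity $\|Du\|(\Omega)-\|D\max\{u,m\}\|(\Omega)=\int_{-\infty}^{m}P(\{u>t\},\Omega)\,dt\le 0$, followed by the same isoperimetric dichotomy, with Lemma~\ref{lem:SupLevelTrace} and the lower Ahlfors regularity of $\Ha|_{\partial\Omega}$ tipping the dichotomy toward full measure. Both proofs share the two key ingredients (the dichotomy for zero-perimeter sets and the positivity of $\Ha$ on nonempty relatively open subsets of $\partial\Omega$), but yours is self-contained modulo Lemma~\ref{lem:SupLevelTrace} and, as you note, your lower-bound argument applies verbatim to \emph{any} strong solution, not just the minimal one (and a symmetric truncation argument at level $M$ would remove the reliance on minimality from the upper bound as well); the paper's route is shorter only because the level-set machinery is already assembled. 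One cosmetic remark: citing Proposition~\ref{prop:DObsExistMin} for the step $u\le\min\{u,M\}$ is unnecessary, since minimality of $u$ is part of the hypothesis and all you use is the definition of minimality once $\min\{u,M\}$ is shown to be a strong solution.
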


\begin{proof}
By uniqueness $\mu$-a.e.\ of the minimal strong solution $u$, we may assume that $u$ has the form given in the proof of Theorem~\ref{thm:ExistStrongSoln}.  That is, 
$$u(x)=\sup\{t\in I:x\in E_t\},$$ where
$I\subset J$ is a countable dense subset of $\R$, and $E_t$ is the minimal strong solution set to the $\K_{F_t,G_t}$-obstacle problem.  As in the proof of Theorem~\ref{thm:ExistStrongSoln}, $F_t=\{\Ext f>t\}$ and $G_t=\{\Ext g>t\}$, where $\Ext f,\Ext g\in C(X)\cap BV(X)$ are the extensions given in \cite[Section~5]{LMSS}.

For each $t\in I$ with $\inf_{\partial\Omega} f<t<\inf_{\partial\Omega} g,$ we have that $G_t=\partial\Omega$ and $\varnothing\neq F_t\subsetneq\partial\Omega.$  Thus $\mu(E_t)>0$ and since $\chi_\Omega\in\K_{F_t,G_t},$ it follows that $P(E_t,\Omega)=0.$  We then have that $E_t=\Omega$.  Indeed, $E_t$ must have full measure in $\Omega,$ or otherwise by connectedness of $\Omega,$ there would exist some ball $B\subset\Omega$ such that $\min\{\mu(B\cap E_t),\mu(B\setminus E_t)\}>0.$  However, by the relative isoperimetric inequality and the fact that $P(E_t,\Omega)=0,$ this is a contradiction, and so $E_t=\Omega.$  Since this holds for all $t\in I,$ it follows from the definition of $u$ that $\inf_{\partial\Omega}g\le u$ $\mu$-a.e.\ in $\Omega.$  Likewise for all $t\in I$ such that $\sup_{\partial\Omega}f<t,$ we have that $F_t=\varnothing,$ and so $E_t=\varnothing.$  Thus $u\le t,$ and since this holds for all such $t$, it follows that $u\le\sup_{\partial\Omega}f.$

Since $f$ and $g$ are continuous, and the outer inequalities in the statement of the proposition are strict, there are subsets of $\partial\Omega$ with $\Ha$-positive measure where $Tu\ne f$ and where $Tu\ne g.$  Thus $u$ cannot be a strong solution to either the Dirichlet problem with boundary data $f$ or $g$.  
\end{proof}

By replacing the function $g$ in Example~\ref{ex:DiffProb} with $g_\eps(x,y)=y+\eps,$ one can see that the minimal strong solution $u_\eps$ to the $\K_{f,g_\eps}$-problem converges to the strong solution the Dirichlet problem with boundary data $f$ as $\eps\to 0.$  Likewise, if $f_\eps(x,y)=y+\eps$ for $0<\eps<1$ and $u_\eps$ is the minimal strong solution to the $\K_{f_\eps,g}$-problem, then $u_\eps$ converges to the strong solution to the Dirichlet problem with boundary data $g$ as $\eps\to 1.$  This stability also holds in the metric setting when obstacles are involved.   

\begin{thm}\label{thm:Stability} 
Let $\psi_1,\psi_2\in C(\overline\Omega)$ and for all $j,k\in\N$, let $f,g_j,h_k\in C(\partial\Omega)$ be such that 
 $g_j,h_k\to f$ as $j,k\to\infty$ pointwise $\Ha$-a.e.\ on $\partial\Omega,$ and such that 
$$g_j\le g_{j+1}\le f\le h_{k+1}\le h_k$$ $\Ha$-a.e.\ on $\partial\Omega$.  Furthermore, suppose the following are all non-empty: \\$\K_{\psi_1,\psi_2,f,f}(\Omega),$ $\K_{\psi_1,\psi_2,g_j,f}(\Omega),$ $\K_{\psi_1,\psi_2,f,h_k}(\Omega),$ and $\K_{\psi_1,\psi_2,g_j,h_k}(\Omega),$ for all \\$j,k\in\N.$ 

\begin{enumerate}
\item[(a)] If $u_j$ is the minimal strong solution to the $\K_{\psi_1,\psi_2,g_j,f}$-obstacle problem for each $j\in\N$, then $u:=\lim_j u_j$ is the minimal strong solution to the $\K_{\psi_1,\psi_2,f,f}$-obstacle problem, and $u_j\to u$ in $L^1(\Omega).$\\
\item[(b)] If $u_k$ is the maximal strong solution to the $\K_{\psi_1,\psi_2,f,h_k}$-obstacle problem for each $k\in\N$, then $u:=\lim_k u_k$ is the maximal strong solution to the $\K_{\psi_1,\psi_2,f,f}$-obstacle problem, and $u_k\to u$ in $L^1(\Omega).$\\
\item[(c)]If $u_{j,k}$ is the minimal strong solution to the $\K_{\psi_1,\psi_2,g_j,h_k}$-obstacle problem, then 
$$u:=\lim_{k\to\infty}\lim_{j\to\infty}u_{j,k}$$ is a strong solution to the $\K_{\psi_1,\psi_2,f,f}$-obstacle problem.\\
\item[(d)]If $u_{j,k}$ is the maximal strong solution to the $\K_{\psi_1,\psi_2,g_j,h_k}$-obstacle problem, then 
$$u:=\lim_{j\to\infty}\lim_{k\to\infty}u_{j,k}$$ is a strong solution to the $\K_{\psi_1,\psi_2,f,f}$-obstacle problem.
\end{enumerate} 
\end{thm}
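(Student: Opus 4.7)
The plan is to establish (a) first via a monotone convergence argument, deduce (b) by the sign-flipping trick of Remark~\ref{remark:MaxSoln}, and then reduce parts (c) and (d) to iterated applications of (a) and (b).  In all four cases the engine will be Theorem~\ref{cor:DObsCompThm}: the comparison principle will produce monotonicity of the approximating solutions, BV lower semicontinuity and dominated convergence will furnish a candidate $BV$-limit, and a two-sided trace sandwich will identify that limit as a strong solution of the target problem.

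For (a), let $u$ denote the minimal strong solution to $\K_{\psi_1,\psi_2,f,f}$.  Since $\K_{\psi_1,\psi_2,f,f}(\Omega)\subset\K_{\psi_1,\psi_2,g_j,f}(\Omega)$, the function $u$ is admissible in each subproblem.  Applying Theorem~\ref{cor:DObsCompThm} first with $\K_1=\K_{\psi_1,\psi_2,g_j,f}$, $\K_2=\K_{\psi_1,\psi_2,g_{j+1},f}$ and then with $\K_2=\K_{\psi_1,\psi_2,f,f}$, I obtain $u_j\le u_{j+1}\le u$ $\mu$-a.e.\ in $\Omega$.  Dominated convergence (with dominant $|u_1|+|u|\in L^1(\Omega)$) then gives $u_j\to\tilde u$ in $L^1(\Omega)$, while $\|Du_j\|(\Omega)\le\|Du\|(\Omega)$ together with BV lower semicontinuity yields $\tilde u\in BV(\Omega)$.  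The obstacle inequalities pass trivially to $\tilde u$.  For the trace, at any boundary point $z$ where $Tu_j(z)$, $T\tilde u(z)$, and $Tu(z)$ all exist, monotonicity of traces under pointwise-a.e.\ comparison of $BV$ functions (immediate from the averaged definition of trace) gives $g_j(z)\le Tu_j(z)\le T\tilde u(z)\le Tu(z)=f(z)$; letting $j\to\infty$ forces $T\tilde u=f$ $\Ha$-a.e.  Hence $\tilde u\in\K_{\psi_1,\psi_2,f,f}(\Omega)$ with $\|D\tilde u\|(\Omega)\le\|Du\|(\Omega)$, so $\tilde u$ is a strong solution, and minimality of $u$ then forces $\tilde u=u$.

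Part (b) will follow from (a) applied to $-u_k$, $-f$, $-h_k$, exchanging minimal with maximal exactly as in Remark~\ref{remark:MaxSoln}.  For (c), I will fix $k$ and let $j\to\infty$: the argument for (a), now with fixed upper bound $h_k$ in place of $f$, gives $\lim_j u_{j,k}=v_k$, the minimal strong solution to $\K_{\psi_1,\psi_2,f,h_k}$.  Comparison with $\K_1=\K_{\psi_1,\psi_2,f,h_{k+1}}$ and $\K_2=\K_{\psi_1,\psi_2,f,h_k}$ yields $v_{k+1}\le v_k$, so $v_k\searrow\bar v$.  Letting $u^\flat$ denote the minimal strong solution to $\K_{\psi_1,\psi_2,f,f}$, comparison of $\K_{\psi_1,\psi_2,f,f}$ with each $\K_{\psi_1,\psi_2,f,h_k}$ yields $u^\flat\le v_k$, hence $u^\flat\le\bar v\le v_k$.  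I then obtain the trace chain $f=Tu^\flat\le T\bar v\le Tv_k\le h_k$ $\Ha$-a.e.\ on $\partial\Omega$, and letting $k\to\infty$ forces $T\bar v=f$ $\Ha$-a.e.  Since $u^\flat$ is admissible in each $\K_{\psi_1,\psi_2,f,h_k}$, the bound $\|Dv_k\|(\Omega)\le\|Du^\flat\|(\Omega)$ together with lower semicontinuity and $\bar v\in\K_{\psi_1,\psi_2,f,f}$ identifies $\bar v$ as a strong solution.  Part (d) proceeds symmetrically: the inner $k$-limit invokes (b) to produce maximal strong solutions $w_j$ of $\K_{\psi_1,\psi_2,g_j,f}$, which by comparison are non-decreasing in $j$ and bounded above by the maximal strong solution $u^\sharp$ of $\K_{\psi_1,\psi_2,f,f}$ (Remark~\ref{remark:MaxSoln}), and the same sandwich argument identifies $\lim_j w_j$ as a strong solution.

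The hard part will be the two-sided trace identification at each limit step: monotone convergence of the approximating sequence only yields one-sided control of its trace directly, and the opposite inequality must be extracted by sandwiching the limit between the approximating sequence and an extremal strong solution of the target class $\K_{\psi_1,\psi_2,f,f}$.  This is precisely where the hypothesis $\K_{\psi_1,\psi_2,f,f}(\Omega)\ne\varnothing$ plays a decisive role, as it supplies the comparison function needed to close the trace estimate in the direction opposite to monotonicity.
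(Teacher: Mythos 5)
Your proposal is correct and follows essentially the same route as the paper's proof: monotonicity from Theorem~\ref{cor:DObsCompThm}, $L^1$ convergence of the monotone sequence, lower semicontinuity of the BV energy using the extremal solution of the target class as a competitor, and the two-sided trace sandwich $g_j\le Tu_j\le Tu\le Tu_f=f$ to identify the limit's trace, with (c) handled by first identifying the inner limit as the minimal strong solution of $\K_{\psi_1,\psi_2,f,h_k}$. The only cosmetic differences are your explicit appeal to dominated convergence (the paper just uses monotone boundedness in $L^1$) and deriving (b) from (a) by the sign flip rather than repeating the argument.
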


The proofs for (b) and (d) are analogous to those for (a) and (c) respectively, so we only prove (a) and (c) here.  Notice that in parts (c) and (d), this result does not give us the minimal or maximal strong solution, but only some strong solution.

\begin{proof}
(a)  Let $u_f$ be the minimal strong solution to the $\K_{\psi_1,\psi_2,f,f}$-obstacle problem, which exists by Theorem~\ref{thm:ExistStrongSoln}.  Since $g_j\le g_{j+1}\le f$ $\Ha$-a.e.\, it follows from Theorem~\ref{cor:DObsCompThm} that 
\begin{equation}\label{eqn:u_f}u_j\le u_{j+1}\le \lim_{j\to\infty}u_j=u\le u_f\end{equation} $\mu$-a.e.\ in $\Omega.$  Since $u_f,u_j\in L^1(\Omega)$ for each $j\in\N$ (as they are in $BV(\Omega)$), it follows that $u_j\to u$ in $L^1(\Omega).$  By lower-semicontinuity of the BV energy, we then have that
$$\|Du\|(\Omega)\le\liminf\|Du_j\|(\Omega).$$  For each $j\in\N,$ we have that $u_f\in\K_{\psi_1,\psi_2,f,f}(\Omega)\subset\K_{\psi_1,\psi_2,g_j,f}(\Omega),$ and so it follows that $\|Du_j\|(\Omega)\le\|Du_f\|(\Omega)$.  Therefore, we have that 
\begin{equation}\label{eqn:u_fEnergy}\|Du\|(\Omega)\le\|Du_f\|(\Omega)<\infty.\end{equation} 
Since $u=\lim_{j\to\infty}u_j$ and $u_j\in\K_{\psi_1,\psi_2,g_j,f}(\Omega)$ for each $j\in\N,$ it follows that $$\psi_1\le u\le\psi_2$$ $\mu$-a.e.\ in $\Omega.$  Moreover, by \eqref{eqn:u_f} it follows that for each $j\in\N,$ 
$$g_j\le Tu_j\le Tu\le Tu_f=f$$ 
$\Ha$-a.e.\ on $\partial\Omega.$  Since $g_j\to f$ pointwise $\Ha$-a.e., we have that $Tu=f$ $\Ha$-a.e.\ on $\partial\Omega,$ and so $u\in\K_{\psi_1,\psi_2,f,f}(\Omega).$  

By \eqref{eqn:u_fEnergy}, we have that $u$ is a strong solution to the $\K_{\psi_1,\psi_2,f,f}$-obstacle problem, and so by \eqref{eqn:u_f} and minimality of $u_f$, it follows that $u=u_f$ $\mu$-a.e.\ in $\Omega.$ \\

(c)  For each $k\in\N,$ let $u_{f,k}$ be the minimal strong solution to the $\K_{\psi_1,\psi_2,f,h_k}$-obstacle problem, which exists by Theorem~\ref{thm:ExistStrongSoln}.  By Theorem~\ref{cor:DObsCompThm}, it follows that 
$$u_{j,k}\le u_{j+1,k}\le\lim_{j\to\infty}u_{j,k}\le u_{f,k}$$ $\mu$-a.e.\ in $\Omega.$  Let $u_k:=\lim_{j\to\infty}u_{j,k}.$  As in the proof of part (a), we have that $u_{j,k}\to u_k$ in $L^1(\Omega)$ as $j\to\infty,$ and $\psi_1\le u_k\le\psi_2$ $\mu$-a.e.\ in $\Omega$.   Likewise, we have that $$u_{f,k}\in\K_{\psi_1,\psi_2,f,h_k}(\Omega)\subset\K_{\psi_1,\psi_2,g_j,h_k}(\Omega)$$ for each $j\in\N,$ and so from lower-semicontinuity, it follows that 
$$\|Du_k\|(\Omega)\le\liminf_{j\to\infty}\|Du_{j,k}\|(\Omega)\le\|Du_{f,k}\|(\Omega)<\infty.$$  Again by monotonicity, we have that for all $j\in\N,$
$$g_j\le Tu_{j,k}\le Tu_k\le Tu_{f,k}\le h_k$$ $\Ha$-a.e.\ on $\partial\Omega.$  Since $g_j\to f$ $\Ha$-a.e.\, it follows that $f\le Tu_k\le h_k$ $\Ha$-a.e.\ on $\partial\Omega.$  Thus, $u_k\in\K_{\psi_1,\psi_2,f,h_k}(\Omega),$ and it follows that $u_k=u_{f,k}$ $\mu$-a.e.\  That is, $u_k$ is the minimal strong solution to the $\K_{\psi_1,\psi_2,f,h_k}$-obstacle problem. 

Now, letting $u_f$ be the minimal strong solution to the $\K_{\psi_1,\psi_2,f,f}$-obstacle problem, it follows from Theorem~\ref{cor:DObsCompThm} that 
\begin{equation}\label{eqn:u_fLowerBound}u_f\le u=\lim_{k\to\infty}u_k\le u_{k+1}\le u_k\end{equation} 
$\mu$-a.e.\ in $\Omega.$  Again, we have that $u_k\to u$ in $L^1(\Omega),$ and $\psi_1\le u\le\psi_2$ $\mu$-a.e.\ in $\Omega.$  By lower-semicontinuity and the fact that $u_f\in\K_{\psi_1,\psi_2,f,f}(\Omega)\subset\K_{\psi_1,\psi_2,f,h_k}(\Omega)$,
we have that 
$$\|Du\|(\Omega)\le\liminf_{k\to\infty}\|Du_k\|(\Omega)\le\|Du_f\|(\Omega)<\infty,$$ and for all $k\in\N$, we have that 
$$f=Tu_f\le Tu\le Tu_k\le h_k$$ $\Ha$-a.e.\ on $\partial\Omega.$  Since $h_k\to f$ $\Ha$-a.e.\ as $k\to\infty,$ it follows that $Tu=f$ $\Ha$-a.e.\ in $\partial\Omega,$ and so $u\in\K_{\psi_1,\psi_2,f,f}(\Omega).$  Thus $u$ is a strong solution to the $\K_{\psi_1,\psi_2,f,f}$-obstacle problem.  However in this case, \eqref{eqn:u_fLowerBound} does not allow us to determine that $u$ is a minimal strong solution.           
\end{proof}

\vskip.5cm
\noindent Department of Mathematical Sciences, University of Cincinnati, P.O.~Box 210025, Cincinnati, OH~45221-0025, U.S.A.\\
\noindent E-mail: {\tt klinejp@mail.uc.edu}

\end{document}